\theoremstyle{definition}
\newtheorem{thm}{Theorem}[section]
\newtheorem{lem}[thm]{Lemma}
\newtheorem{df}[thm]{Definition}
\newtheorem{prop}[thm]{Proposition}
\newtheorem{cor}[thm]{Corollary}
\newtheorem{rem}[thm]{Remark}
\newcommand{\HH}{\mathbb{H}}
\newcommand{\C}{\mathbb{C}}
\newcommand{\R}{\mathbb{R}}
\newcommand{\Z}{\mathbb{Z}}
\newcommand{\N}{\mathbb{N}}
\newcommand{\E}{\mathbb{E}}
\newcommand{\Var}{\text{Var}}
\newcommand{\ii}{\mathbf{i}}
\newcommand{\bfy}{\mathbf{y}}
\newcommand{\bfz}{\mathbf{z}}
\newcommand{\mm}{\mathfrak{m}}
\newcommand{\B}{\mathfrak{B}}
\newcommand{\V}{\mathcal{V}}
\newcommand{\SSS}{\mathscr{S}}
\newcommand{\F}{\mathbb{F}}
\newcommand{\PP}{\mathcal{P}}
\newcommand{\W}{\mathcal{W}}
\newcommand{\EE}{\mathbb{E}}
\newcommand{\trig}{\textrm{trig}}
\newcommand{\corners}{\textrm{corners}}
\newcommand{\sym}{\textrm{sym}}
\newcommand{\interior}[1]{
  {\kern0pt#1}^{\mathrm{o}}
}
\numberwithin{equation}{section}
\begin{document}

\title{Universal behavior of the corners of orbital beta processes}

\author{Cesar Cuenca}

\address{Department of Mathematics, MIT, Cambridge, MA, USA}
\email{cuenca@mit.edu}

\date{}

\begin{abstract}
There is a unique unitarily-invariant ensemble of $N\times N$ Hermitian matrices with a fixed set of real eigenvalues $a_1 > \dots > a_N$.
The joint eigenvalue distribution of the $(N-1)$ top-left principal submatrices of a random matrix from this ensemble is called the orbital unitary process.
There are analogous matrix ensembles of symmetric and quaternionic Hermitian matrices that lead to the orbital orthogonal and symplectic processes, respectively.
By extrapolation, on the dimension of the base field, of the explicit density formulas, we define the orbital beta processes.
We prove the universal behavior of the virtual eigenvalues of the smallest $m$ principal submatrices, when $m$ is independent of $N$ and the eigenvalues $a_1 > \dots > a_N$ grow linearly in $N$ and in such a way that the rescaled empirical measures converge weakly.
The limiting object is the Gaussian beta corners process.
As a byproduct of our approach, we prove a theorem on the asymptotics of multivariate Bessel functions.
\end{abstract}

\maketitle

\section{Introduction}

\subsection{Preface}

A \textit{beta ensemble} (of rank $N$) is a probability measure on $N$-tuples $(x_1 \geq x_2 \geq \ldots \geq x_N)$ of non-increasing real numbers with density, with respect to the Lebesgue measure,
\begin{equation}\label{def:betaensemble}
\frac{1}{Z} \prod_{1 \leq i < j \leq N}{|x_i - x_j|^{\beta}}\prod_{k=1}^{N}{w(x_k)},
\end{equation}
where $\beta > 0$ is a parameter, $w : \R \rightarrow [0, \infty)$ is a weight function, and $Z$ is a normalization constant, typically called the partition function.
The weight function has to decay sufficiently fast at infinity for the partition function to be finite.
An example is $w(x) = \exp(-x^2/2)$: the Gaussian weight.
When $w$ is the Gaussian weight and $\beta = 1, 2, 4$, the beta ensemble $(\ref{def:betaensemble})$ is the eigenvalue distribution for the Gaussian Orthogonal/Unitary/Symplectic ensemble (GOE/GUE/GSE) from random matrix theory, respectively; see the books \cite{AGZ}, \cite{F}, \cite{Me}.
In a different direction, beta ensembles are related to the mean field theory of physical systems with log-gas interactions, and $\beta$ is the inverse temperature of the model.
Furthermore, beta ensembles with a general $\beta > 0$ are intimately related to Selberg-type integrals, e.g. see the survey \cite{FW}.
These considerations show the importance of considering the most general case $\beta > 0$ of beta ensembles.

Our interest is in \textit{robust} (or \textit{universal}) limits of beta ensembles as $N$ tends to infinity, where by robust we mean that the limit measures depend very little on the specific features of the ensembles in question (for example, limiting behavior of $(\ref{def:betaensemble})$ should not depend much on the choice of weight function $w$). 
In some special cases, for instance $\beta = 2$ and $w(x) = \exp(-x^2/2)$, very explicit formulas for the correlation measures of $(\ref{def:betaensemble})$ allow limit transitions as $N \rightarrow \infty$ that yield the Sine process and Airy process in the bulk and the edge, respectively.
Of course, the general case $\beta > 0$ (and general $w$) is more complicated and finer techniques are required to obtain limits, and to prove they are robust.
Nevertheless, in recent history there has been spectacular progress in understanding the local bulk and edge limits of general beta ensembles, see for example \cite{DE}, \cite{ES}, \cite{RRV}, \cite{VV}, etc.
The universality of the resulting limits has also been highly studied, e.g. \cite{Sh}, \cite{BFG}, \cite{BEY}, \cite{GH}.
Likewise, global Gaussian asymptotics (and the appearance of the Gaussian Free Field) is of recent interest and many articles are devoted to it: see \cite{J2}, \cite{BGu}, \cite{BG}, \cite{GZ}, \cite{CE}, \cite{ES}, etc.

In this paper we study a different kind of beta ensemble which is \textit{not} of the form $(\ref{def:betaensemble})$, but is still an extrapolation of eigenvalue densities of certain random matrix ensembles called the \textit{orbit measures}; e.g., see \cite{De} and references therein.
An orbit measure is the pushforward of the Haar measure of a (locally compact) Lie group $G$ to an adjoint orbit of its Lie algebra $\mathfrak{g}$, hence the name. We shall be interested in $G = U(N), O(N), Sp(N)$, as we can have the rank $N$ tend to infinity.
When $G = U(N)$, then $\mathfrak{g}$ is isomorphic to the Lie algebra of $N\times N$ Hermitian matrices, denoted $\textrm{Herm}(N)$, and any orbit measure is supported by Hermitian matrices $M = [M_{i, j}]_{i, j = 1}^N$ with a fixed set of eigenvalues.
By projecting the orbit measure on the set of eigenvalues of the top-left principal submatrices $M^{(k)} = [M_{i, j}]_{i, j = 1}^k$, $1 \leq k \leq N-1$, we obtain a random triangular array $\{x^{(k)}_i\}_{1 \leq i \leq k \leq N-1}$ whose distribution we call the orbital unitary process.
Similar considerations lead us to the orbital orthogonal/symplectic processes and, by analytic continuation, we arrive at the definition of \textit{orbital beta processes} in Definition $\ref{def:betaprocess}$.

Starting with the GUE/GOE/GSE, the same procedure of projecting to the eigenvalues of the top-left principal submatrices gives the Gaussian orthogonal/unitary/symplectic corners processes.
Again there is a one-parameter extrapolation called the \textit{Gaussian beta corners process} that has already been considered (see \cite{Ba}, \cite{JN}) and has found several nice applications, e.g. \cite{GoM}, \cite{GS}.
The Gaussian beta corners processes are convex combinations of orbital beta processes, so the latter are the more fundamental objects.
Thus one would imagine that the limits and universality results for Gaussian beta ensembles (and corners processes) are also present, in some form, for the orbital beta processes, but to the author's knowledge, there have been no such results in the literature.

We study the universality of the $m(m+1)/2$--dimensional \textit{corners process} of the orbital beta process; in the setting above, the corners process is the distribution of the particles $\{ x^{(k)}_i \}_{1 \leq i \leq k \leq m}$.
The regime of interest is when $m$ is fixed and independent of $N$; we show that the universal limit is the Gaussian beta corners process.
The universality in this scenario means that the limit is always the same, regardless of how the orbits tend to infinity.
Even though this is the first article where the universality in this limit regime is studied, in the special case $\beta = 2$, corner processes and their Gaussian limits are ubiquitous in both the discrete and continuous settings, for example in connection to: last-passage percolation \cite{Ba}, GUE minors \cite{JN}, lozenge tilings \cite{OR2}, \cite{GP}, and the 6-vertex model \cite{G}, \cite{Di}.

We proceed with a more detailed account on the setting and results of this paper.

\subsection{Orthogonal/Unitary/Symplectic orbital processes}

Let $\mathbb{F}$ be the field $\R, \C$ or the skew-field of quaternions $\HH$.
Let $M_{N\times N}(\F)$ be the set of $N\times N$ matrices with entries in $\F$.
Given $A\in M_{N\times N}(\F)$, denote
\begin{equation*}
A^* := \left \{
  \begin{aligned}
    &\textrm{transpose of }A && \text{if } \F = \R;\\
    &\textrm{Hemitian transpose of }A && \text{if } \F = \C;\\
    &\textrm{quaternionic conjugate transpose of }A && \textrm{if } \F = \HH.
\end{aligned} \right.
\end{equation*}
A matrix $A\in M_{N\times N}(\F)$ with $A = A^*$ is said to be self-adjoint; in all three cases, self-adjoint matrices have real spectra.
We want to consider ensembles of self-adjoint matrices $\{M_N\}_{N \geq 1}$ of growing size, such that each $M_N$ has fixed eigenvalues and random eigenvectors.

Consider the groups
\begin{equation*}
U(N, \F) := \left \{
  \begin{aligned}
    &O(N, \R) \ (\textrm{orthogonal group}) && \text{if } \F = \R;\\
    &U(N, \C) \ (\textrm{unitary group}) && \text{if } \F = \C;\\
    &Sp(N) \ (\textrm{compact symplectic group}) && \textrm{if } \F = \HH.
\end{aligned} \right.
\end{equation*}
Each one of them is a compact Lie group with a unique left- and right-translation invariant probability measure called the Haar distribution on $U(N, \F)$.

Define
$$\mathcal{P}_N := \{ a(N) = (a_1, \ldots, a_N)\in\R^N \mid a_1 > a_2 > \dots > a_N \}$$
as the set of ordered eigenvalues of a generic $N\times N$ self-adjoint matrix.
Given $a(N) = (a_1 > \ldots > a_N)\in\PP_N$, let $D(a(N))$ be the diagonal $N\times N$ matrix whose diagonal entries $D(a(N))_{i, i}$ are the entries $a_i$ of the $N$-tuple $a(N)$.
If $U_N \in U(N, \mathbb{F})$ is a random Haar-distributed matrix, then $M_N := U_ND(a(N))U_N^*$ is a random, self-adjoint matrix, with fixed real eigenvalues $a_1 > \dots > a_N$.

For $k = 1, 2, \ldots, N$, let $M_N^{(k)}$ be the top-left $k\times k$ corner of $M_N = U_ND(a(N))U_N^* = [m_{i, j}]_{i, j = 1}^N$, i.e., $M_N^{(k)} = [m_{i, j}]_{i, j = 1}^k$.
The matrices $M_N^{(k)}$ are self-adjoint and have random real eigenvalues $a^{(k)}_1 \geq \dots \geq a^{(k)}_k$.
The well-known Rayleigh's theorem (see \cite{B}, \cite{Ga}) asserts that the eigenvalues of different corners interlace:
$$a^{(k+1)}_1 \geq a^{(k)}_1 \geq a^{(k+1)}_2 \geq \dots \geq a^{(k+1)}_k \geq a^{(k)}_k \geq a^{(k+1)}_{k+1}, \textrm{ for all } 1 \leq k \leq N-1.$$
This theorem holds for deterministic matrices.
For our random ensemble of matrices $\{M_N = U_ND(a(N))U_N^*\}$ all inequalities are strict, almost surely.
We can arrange the eigenvalues of all corners $M_N^{(1)}, M_N^{(2)}, \dots, M_N^{(N)} = M_N$ in a triangular array:
\begin{align*}
&a_1 \qquad\quad a_2 \qquad\quad a_3 \qquad\qquad\qquad\ a_{N-1} \qquad\quad a_N\\
&\qquad a_1^{(N-1)} \quad  a_2^{(N-1)} \qquad\qquad a_{N-2}^{(N-1)} \quad\ a_{N-1}^{(N-1)}\\
&\qquad\qquad\quad\cdots\qquad\quad\cdots\qquad\quad\cdots\\
&\qquad\qquad\qquad\qquad a_1^{(2)} \qquad a_2^{(2)}\\
&\qquad\qquad\qquad\qquad\qquad a_1^{(1)}
\end{align*}

Following \cite{N}, we say that an arrangement as the one above is a \textit{Rayleigh triangle} if each number is smaller than its top-left neighbor, but larger than its top-right neighbor.
The $k$-tuple $a^{(k)} = (a^{(k)}_1, \ldots, a^{(k)}_k)$ is said to be the $k$-th level of the Rayleigh triangle, while $a = (a_1, \ldots, a_N)$ is the top level.
The probability distribution of the eigenvalues $\{a_j^{(k)}\}_{1 \leq j \leq k \leq N-1}$ in $\R^{\frac{N(N-1)}{2}}$ will be called \textit{orbital orthogonal/unitary/symplectic process} when $\F = \R/\C/\HH$, respectively.

A natural question is to study the (probabilistic) features of the Rayleigh triangles as $N$ goes to infinity and the top level $a(N) = (a_1 > a_2 > \dots > a_N)$ grows in a certain specific way.
In this article, we focus on the bottom $m$ levels of the random Rayleigh triangles, i.e., on the eigenvalues $\{a^{(k)}_j\}_{1 \leq j \leq k \leq m}$, when $m$ is independent of $N$.
We show that, under certain conditions on the growth of the top levels $a(N)$, a proper re-scaling of the random eigenvalues $a^{(m)}_1 > \dots > a^{(m)}_m$ converges weakly to the Gaussian beta ensemble with $\beta = \textrm{dim}(\F)$; see Theorem \ref{thm:GBE} for the precise statement.
A slight extension of this result shows that the re-scaled set of eigenvalues $a^{(k)}_j$, $1 \leq j \leq k \leq m$, of the bottom $m$ levels, converges to the Gaussian beta corners process with $\beta = \dim(\F)$: see Theorem $\ref{thm:GBE2}$ in the text.

Our results prove the \textit{universality} of the eigenvalues of orbital orthogonal/unitary/symplectic ensembles, and identifies the Gaussian beta  ensemble as the limiting object (the parameter $\beta$ takes values in $\{ 1, 2, 4 \}$ in this random matrix theory setting).
In fact, we go further and study a natural one-parameter generalization of the orbital processes above, that we call the \textit{orbital beta processes}.
The additional parameter is denoted $\theta$ and takes values in $(0, \infty)$; when $\theta = \frac{1}{2}, 1, 2$, we recover the orbital orthogonal, unitary and symplectic processes, respectively\footnote{To conform with traditional ``beta'' random matrix theory, it would make sense to use the parameter $\beta = 2\theta$ instead. We prefer the parameter $\theta$ that conforms with the theory of special functions (Jack polynomials and multivariate Bessel functions) which lie at the heart of our proofs.}.
The orbital beta processes are also probability measures on Rayleigh triangles $\{a_j^{(k)}\}_{1 \leq j \leq k \leq N-1}$ with a fixed top level $a = (a_1 > \dots > a_N)$.
One can think of the values $\{a^{(k)}_j\}_{1 \leq j \leq k}$ as ``virtual eigenvalues'' of the $k\times k$ corner, however the random matrix picture is lost for general values of $\theta > 0$.
We prove that the virtual eigenvalues of the bottom $m$ levels of the orbital beta processes still display a universal behavior and the limiting object is the general Gaussian beta corners process.

\subsection{Orbital beta processes}

Let $a = (a_1 > a_2 > \dots > a_N)\in\PP_N$ be an ordered $N$-tuple.
The \textit{interlacing scheme} $\PP(a) \subset \R^{N-1}\times\R^{N-2}\times\cdots\times \R$ is the convex polytope defined by
\begin{multline*}
\PP(a) := \{(a^{(N-1)}, \dots, a^{(1)}) \mid a^{(k)} \in \PP_k \textrm{ for } k = 1, 2, \dots, N-1, \\
a^{(k+1)}_j > a^{(k)}_j > a^{(k+1)}_{j+1} \textrm{ for all }1 \leq j \leq k \leq N-2, \textrm{ and } \\
a_j > a^{(N-1)}_j > a_{j+1} \textrm{ for all }1 \leq j \leq N-1 \}.
\end{multline*}
In other words, $\PP(a)$ is an embedding of the space of Rayleigh triangles with top level $a$ into $\R^{N-1}\times\R^{N-2}\times\cdots\times\R$.
The orbital orthogonal/unitary/symplectic processes with top level $a$ are probability distributions on $\PP(a)$.
They are absolutely continuous and their densities were calculated in \cite{N}\footnote{In \cite{N}, the author remarks that the proof is essentially within the calculations of \cite[II.9]{GN}.} for all three cases $\theta\in\{\frac{1}{2}, 1, 2\}$; see also \cite{Ba} for $\theta = 1$.
The formulae in \cite{N} can be extrapolated in the dimension of the base field, i.e., there is a formula depending continuously on a parameter $\theta$ that reduces to the densities for orbital orthogonal/unitary/symplectic processes when $\theta = \frac{1}{2}/1/2$, respectively.
This is the base of our definition of orbital beta processes.

\begin{df}\label{def:betaprocess}
Given $a = (a_1 > \dots > a_N)\in\PP_N$, the \textit{orbital beta process with top level $a$} is the probability measure $\mm^{a, \theta}$ on the interlacing scheme  $\PP(a)$ with density
\begin{equation}\label{def:densityOBP}
\mm^{a, \theta}\{ (a^{(N-1)}, \dots, a^{(1)}) \} \propto
\prod_{k = 1}^{N-1}\left\{\prod_{1 \leq i < j \leq k}{|a_i^{(k)} - a_j^{(k)}|^{2 - 2\theta}}
\prod_{s = 1}^{k+1}\prod_{r = 1}^k{|a_s^{(k+1)} - a_r^{(k)}|^{\theta - 1}} \prod_{j=1}^k{da^{(k)}_j} \right\},
\end{equation}
for any $(a^{(N-1)}, \dots, a^{(1)}) \in \PP(a)$.
In the formula above, we set $a^{(N)}_s := a_s$ for all $1 \leq s \leq N$.
Also, $\propto$ means that we are omitting a normalization constant that is chosen in such a way that the total weight of $(\ref{def:densityOBP})$ is $1$.
The value of the normalization constant admits a closed form expression, as shown in Lemma $\ref{lem:normalization}$.
\end{df}

One obvious but important property of the orbital beta process is the following: conditioned on the $k$-th level $a^{(k)}$, the joint distribution of $a^{(1)}, \dots, a^{(k-1)}$ (the levels $1, \ldots, k-1$) is independent from the joint distribution of $a^{(k+1)}, \dots, a^{(N-1)}$ (the levels $k+1, \ldots, N-1$).
Moreover, conditioned on $a^{(k)}$, the joint distribution of $a^{(1)}, \dots, a^{(k-1)}$ is $\mm^{a^{(k)}, \theta}$.

The density in $(\ref{def:densityOBP})$ is identically equal to one when $\theta = 1$.
This means that $\mm^{a, \theta = 1}$ is the uniform probability distribution on the interlacing scheme $\PP(a)$.
Moreover, conditioned on the $k$-th level $a^{(k)}$, the distribution of the lower levels $1, \ldots, k-1$ is uniform on $\PP(a^{(k)})$.
This is called the \textit{Gibbs property}.
Because of this property, the occurrence of the Gaussian unitary corners process in the limit of Theorem $\ref{thm:GBE}$ is not surprising: the Gibbs measures were classified in \cite{OV}, and \cite{OR2} gives heuristics arguing why the Gaussian unitary corners process is the only plausible candidate out of all Gibbs measures to be the limit in the regime of Theorem $\ref{thm:GBE}$.
Similarly, one could make a heuristical argument explaining the occurrence of the Gaussian orthogonal/symplectic corners processes in Theorem \ref{thm:GBE} when $\theta = \frac{1}{2}, 2$.
Indeed, the necessary classification of probability measures with the \textit{$\theta$-Gibbs property} (as in Definition \ref{def:Gibbs} below) was recently obtained in \cite{Ni}, when $\theta = \frac{1}{2}, 2$.
All these articles motivated us to ask whether similar results hold for general $\theta > 0$, and to make the statements precise.

\subsection{Main result}\label{sec:intromain}

Let us recall the definition of the Gaussian beta ensemble, see e.g. \cite{AGZ}, \cite{F}, \cite{Me}, which will play the role of a universal object in our main theorem.

\begin{df}\label{def:betagaussian}
The \emph{Gaussian} (or \emph{Hermite}) \emph{beta ensemble} (of rank $m$) is the absolutely continuous probability measure $\mm^{(\theta)}_m$ on the \emph{Weyl chamber}
$$
\W_m := \{(x_1, \ldots, x_m)\in\R^m : x_1 \geq \ldots \geq x_m \},
$$
whose density is
$$\mm^{(\theta)}_m(x_1, \ldots, x_m)  := \frac{1}{Z_{m, \theta}}\prod_{1\leq i < j\leq m}{|x_i - x_j|^{2\theta}}\exp\left( -\frac{\theta}{2}\sum_{i=1}^m{x_i^2} \right),$$
and $Z_{m, \theta}$ is an appropriate normalization constant.
The value of $Z_{m, \theta}$ admits a closed form expression by Selberg's integral.
\end{df}

\begin{rem}
The normalization $\exp(-\theta x^2/2)$ in Definition \ref{def:betagaussian} was chosen so that the formulas in $(\ref{eqn:meanvariance})$ are independent of $\theta$.
\end{rem}

\begin{df}\label{def:regularsignatures}
A sequence of ordered tuples $\{ a(N) \in \PP_N\}_{N \geq 1}$ is said to be \emph{regular} if
\begin{enumerate}[label=(\alph*)]
	\item $|a(N)_1|, |a(N)_N| = O(N)$, as $N\rightarrow\infty$;
	\item let $\mu_N$ be the empirical measure of the points $\{a(N)_i/N\}_{1 \leq i \leq N}$, i.e.
$$\mu_N := \frac{1}{N}\sum_{i=1}^N{\delta_{a(N)_i/N}}.$$
There exists a (compactly supported) probability measure $\mu$ on $\R$ such that
$$\mu_N \to \mu \ \text{ weakly, as }N \rightarrow \infty.$$
We say that $\mu$ is the \emph{limiting measure} of the ordered tuples $\{a(N)\}_{N \geq 1}$.
\end{enumerate}
\end{df}

The following is the main theorem of this article.

\begin{thm}\label{thm:GBE}
Let $m \in \N_+$.
Let $\{a(N) \in \PP_N\}_{N \geq 1}$ be a regular sequence of ordered tuples with limiting measure $\mu$.
Let $\{a^{(k, N)}_j\}_{1 \leq j \leq k \leq N-1}$ be a random element from $\PP(a(N))$, which is distributed by the orbital beta process with top level $a(N)$, as in $(\ref{def:densityOBP})$; in particular, the $m$-th level of this element is $\{a^{(m, N)}_i\}_{1 \leq i \leq m}$.
Then, as $N\rightarrow\infty$, the $m$-dimensional vectors
\begin{equation*}
\left( \frac{a_1^{(m, N)} - N \E[\mu_N]}{\sqrt{N \Var[\mu]}}, \ldots, \frac{a^{(m, N)}_m - N\E[\mu_N]}{\sqrt{N \Var[\mu]}}  \right),
\end{equation*}
converge weakly to the Gaussian beta ensemble of Definition $\ref{def:betagaussian}$.

Above, for any probability measure $\nu$ on $\R$, we have used the notations
\begin{equation}\label{eqn:meanvariance}
\E[\nu] := \int_{\R}{t \nu(d t)}, \quad \Var[\nu] := \int_{\R}{t^2 \nu(dt)} - \E[\nu]^2.
\end{equation}
\end{thm}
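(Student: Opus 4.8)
The plan is to reduce Theorem~\ref{thm:GBE} to an asymptotic statement about a single special function, the rank-$N$ multivariate Bessel function with parameter $\theta$, and then to analyze that function. Write $\B_k(\,\cdot\,;\,\cdot\,)$ for the rank-$k$ multivariate Bessel function with parameter $\theta$, normalized so that $\B_k(0,\dots,0;\,\cdot\,)=1$. The density $(\ref{def:densityOBP})$ is, by construction, exactly the integration kernel in the Gelfand--Tsetlin-type recursive integral representation of $\B_N$; iterating that representation $N-m$ times and specializing the last $N-m$ spectral arguments to $0$ yields
\begin{equation}\label{eqn:Btransform}
\E_{\mm^{a,\theta}}\big[\, \B_m\big(x_1,\dots,x_m;\,a^{(m)}\big) \,\big] \;=\; \B_N\big(x_1,\dots,x_m,0,\dots,0;\,a\big),
\qquad a\in\PP_N,\ \ x\in\R^m,
\end{equation}
where the missing constant is $1$ since both sides equal $1$ at $x=0$. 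Thus the law of the $m$-th level of $\mm^{a,\theta}$ is encoded, via its Bessel transform, by the values of $\B_N$ at a degenerate spectral argument.

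By the translation covariance of $(\ref{def:densityOBP})$ --- replacing $a$ by $a+c(1,\dots,1)$ shifts every level by $c(1,\dots,1)$ --- we may assume $\E[\mu_N]=0$ for all $N$ (this is exactly why the theorem centers by $N\E[\mu_N]$), hence $\E[\mu]=0$; set $\sigma^2:=\Var[\mu]$. Using $(\ref{eqn:Btransform})$ together with the scaling $\B_m(x;\lambda y)=\B_m(\lambda x;y)$, the rank-$m$ Bessel transform of the rescaled vector $\big(a_i^{(m,N)}/(\sigma\sqrt N)\big)_{i=1}^m$ equals $\B_N\big(x_1/(\sigma\sqrt N),\dots,x_m/(\sigma\sqrt N),0,\dots,0;\,a(N)\big)$. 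So the theorem reduces to showing that, for every regular sequence with mean-zero limiting measure $\mu$ of variance $\sigma^2$, this quantity tends as $N\to\infty$ (locally uniformly in $x\in\R^m$, whence also at imaginary arguments by analyticity) to $\exp\!\big(\tfrac{1}{2\theta}\sum_{i=1}^m x_i^2\big)$; the latter is precisely the rank-$m$ Bessel transform of the Gaussian beta ensemble $\mm^{(\theta)}_m$ --- for $m=1$ it is the moment generating function of $\mathcal N(0,\theta^{-1})$, and in general it factorizes over $i$ because the Bessel transform of a Gaussian ensemble is Gaussian.

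Granting this limit, weak convergence of the rescaled $m$-th level to $\mm^{(\theta)}_m$ follows from the continuity theorem for the multivariate Bessel (Dunkl) transform: at imaginary spectral parameters $x=\ii t$ the transforms $\B_m(\ii t;\,\cdot\,)$ are bounded by $1$, their limit $\exp\!\big(-\tfrac{1}{2\theta}\sum t_i^2\big)$ is continuous at $t=0$, so the rescaled laws are automatically tight, and injectivity of the transform on probability measures on $\W_m$ pins down the limit as $\mm^{(\theta)}_m$. The refinement to the corners statement (Theorem~$\ref{thm:GBE2}$) follows by applying the same argument jointly to all levels $\leq m$, using the consistency of $(\ref{eqn:Btransform})$ across levels and the conditional (Gibbs-type) structure of $\mm^{a,\theta}$ noted after Definition~$\ref{def:betaprocess}$.

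The crux, and the step I expect to be the main obstacle, is the asymptotics of $\B_N(x_1,\dots,x_m,0,\dots,0;a(N))$ with $m$ fixed, $N\to\infty$, and $x_i=O(N^{-1/2})$; on its own this is the theorem on multivariate Bessel functions advertised in the abstract. Since only $m$ spectral arguments are nonzero, this quantity can be written as a $\theta$-deformed hypergeometric series in $x_1,\dots,x_m$ whose coefficients are Jack-type complete-homogeneous functions of $a(N)$, or --- more robustly --- as an $m$-fold contour integral whose integrand carries a factor $\exp\!\big(N\sum_{\ell=1}^m\!\int\log(z_\ell-Ns)\,\mu_N(ds)\big)$ times interaction factors $\prod_{i<j}(z_i-z_j)^{2\theta}$ and the exponentials $e^{x_\ell z_\ell}$. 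Evaluating by Laplace's method / steepest descent: after the $N^{-1/2}$ rescaling of the $x_\ell$ the critical points coalesce at $N\,\E[\mu_N]=0$, the linear term in the local expansion of the exponent vanishes by the centering, and the quadratic term contributes exactly $\tfrac{1}{2\theta}\sum_\ell x_\ell^2$ with coefficient $\Var[\mu]$; all higher moments of $\mu$, and all finer features of the convergence $a(N)\to\mu$, enter only at order $N^{-1/2}$ and disappear --- this is the source of universality. The genuinely delicate parts are: uniform control of the integral (or series tail) away from the critical region, in $N$ and in $x$ over the range needed for the continuity theorem; handling the powers $(z_i-z_j)^{2\theta}$ and the branch cuts of $\log(z_\ell-Ns)$ when $\theta\neq1$, where no determinantal simplification is available; and checking that the corrections are $O(N^{-1/2})$ rather than merely $o(1)$, since no rate is assumed for $\mu_N\to\mu$ and the centering must absorb errors of size up to $\sqrt N$.
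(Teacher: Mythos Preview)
Your reduction is exactly the paper's: Proposition~\ref{prop:obsorbital} gives your identity $(\ref{eqn:Btransform})$, Corollaries~\ref{cor:additionlabel}--\ref{cor:multiplicationlabel} handle centering and scaling, Proposition~\ref{prop:obsbessel} identifies $\exp\big(\tfrac{1}{2\theta}\sum x_i^2\big)$ as the Bessel transform of $\mm_m^{(\theta)}$, and the passage from convergence of Bessel transforms to weak convergence is done in the paper via the Dunkl inversion theorem on Schwartz space (Theorem~\ref{thm:dJ}) rather than a L\'evy-type continuity statement, but your version is equally serviceable. So the architecture is right, and you have correctly isolated the heart of the matter as the asymptotic
\[
\B_{a(N)}\!\left(\tfrac{y_1}{\sqrt N},\dots,\tfrac{y_m}{\sqrt N},0^{N-m};\theta\right)\longrightarrow \exp\!\Big(\tfrac{\Var[\mu]}{2\theta}\sum_i y_i^2\Big),
\]
which is Theorem~\ref{thm:asymptoticbessels}.

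The gap is in your proposed attack on this asymptotic. You posit an $m$-fold contour integral for $\B_N(x_1,\dots,x_m,0^{N-m};a)$ with integrand $\prod_{i<j}(z_i-z_j)^{2\theta}\prod_\ell e^{x_\ell z_\ell}\prod_{\ell,i}(z_\ell-a_i)^{-\theta}$ and plan a joint steepest descent. For $\theta=1$ such a formula follows from Harish--Chandra--Itzykson--Zuber, but for general $\theta>0$ no contour integral of this shape is known; indeed the absence of a determinantal/product structure is precisely what makes the multivariate case hard. The paper therefore does \emph{not} attempt a direct $m$-variable saddle analysis. Instead it proves the $m=1$ case by steepest descent on a genuine single contour integral (Theorems~\ref{besselthm2}--\ref{besselthm1}, obtained by degenerating a Jack-polynomial formula), and then inducts on $m$ using a Pieri-type integral identity (Theorem~\ref{besselthm3}) that expresses $\B_a(-y_1,\dots,-y_m;N;\theta)\,\B_a(-y;N;\theta)$ as an integral over a simplex of $\B_a(-y_1-z_1,\dots,-y_m-z_m,-y+\sum z_i;N;\theta)$ against an explicit kernel. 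A Laplace-type localization of that simplex integral near $z=0$ (Claims~1--2 in Section~\ref{sec:asymptotics}) yields
\[
\B_a(-y_1/\sqrt N,\dots,-y_m/\sqrt N;N)\,\B_a(-y/\sqrt N;N)=\B_a(-y_1/\sqrt N,\dots,-y_m/\sqrt N,-y/\sqrt N;N)+o(1),
\]
closing the induction. The uniform bounds you worry about are supplied not by tail estimates on a hypothetical $m$-fold integral but by the combinatorial formula (Proposition~\ref{prop:combinatorial}) through the elementary inequality of Lemma~\ref{lem:boundedness}, which reduces boundedness in $m$ variables to the already-established $m=1$ convergence; Montel's theorem then upgrades pointwise limits on a real cone to uniform limits on $\C^m$. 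So your instinct that steepest descent drives the proof is right, but it is applied only once, at $m=1$; the passage to general $m$ is an induction through a nontrivial integral identity rather than a multivariable saddle computation.
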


\bigskip

We can actually say even more and give the limit joint distribution of the triangular array $\{ a_j^{(k)} \}_{1 \leq j \leq k \leq m}$ consisting of the bottom $m$ levels.
It turns out that, with the same normalization as in Theorem $\ref{thm:GBE}$, the triangular array converges in distribution to the Gaussian beta corners process; see Theorem $\ref{thm:GBE2}$ below.

\begin{rem}
In an earlier version of this paper, condition (b) in Definition \ref{def:regularsignatures} was replaced by the condition

(b') there exists a function $f: [0, 1] \to \R$ such that
$$\sum_{i=1}^N{\left| \frac{a(N)_i}{N} - f\left( \frac{i}{N} \right) \right|} = o(\sqrt{N}),\quad N \to \infty.$$
The old hypotheses on $f$ for Theorem \ref{thm:GBE} were that $f$ had to be weakly decreasing, piecewise $C^1$ function and $f'$ had to have left and right limits at points of discontinuity.
Then the old conclusion was that the vectors
\begin{equation*}
\left( \frac{a_1^{(m, N)} - N \E[f]}{\sqrt{N \Var[f]}}, \ldots, \frac{a^{(m, N)}_m - N\E[f]}{\sqrt{N \Var[f]}}  \right),
\end{equation*}
\begin{equation*}
\text{where}\quad\E[f] := \int_{0}^{1}{f(t) dt}, \quad \Var[f] := \int_{0}^{1}{f(t)^2 dt} - \E[f]^2,
\end{equation*}
converge weakly to the Gaussian beta ensemble, as $N\to\infty$.
The proof of this version of the theorem is essentially the same, with the only differences being minor technical estimates --- see Step 3 in Section \ref{sec:m1}.
The current versions of Definition \ref{def:regularsignatures} and Theorem \ref{thm:GBE} are more in line with the language used in the random matrix theory literature.
Besides, condition (b) is less restrictive than (b').
I am grateful to one of the referees for suggesting to work with the current version of Definition \ref{def:regularsignatures}.
\end{rem}

\subsection{Methodology}

The first step towards the study of orbital beta processes is to relate them to the \textit{multivariate Bessel functions} (MBFs).
The MBFs are certain analytic functions on $2N$ variables
\begin{align*}
\C^N \times \C^N &\rightarrow \C\\
((a_1, \ldots, a_N), (x_1, \ldots, x_N)) &\mapsto \B_{(a_1, \ldots, a_N)}(x_1, \ldots, x_N; \theta)
\end{align*}
that further depend on an additional complex parameter $\theta$.
Each MBF $\B_{(a_1, \ldots, a_N)}(x_1, \ldots, x_N; \theta)$ is symmetric with respect to the variables $a_1, \ldots, a_N$ and, independently, with respect to the variables $x_1, \ldots, x_N$.
They are defined as the common eigenfunctions of a system of differential equations related to the rational Calogero-Moser integrable system; see \cite{D}, \cite{Op}, \cite{dJ}, \cite{GK}, and Section $\ref{sec:bessels}$ for more details.
The MBFs are part of a large family of special functions associated to root systems, whose initial purpose was to create a theory of Fourier analysis on symmetric spaces without any group structure behind it; e.g., consult the recent survey \cite{An}.

It turns out that the density of the orbital beta process, in Definition $\ref{def:betaprocess}$, is present in an integral representation of the multivariate Bessel functions, see Proposition $\ref{prop:combinatorial}$ in the text.
This connection between orbital beta processes and MBFs will be key in our approach.
The study of limits of statistical mechanical models and random matrices via special functions (Schur and Macdonald polynomials, Heckman-Opdam functions, etc.) has been very fruitful; for example, see \cite{OkR}, \cite{BC}, \cite{BG}, \cite{J}, \cite{GP}.

The relationship between orbital beta processes and Bessel functions allows to compute observables for the orbital beta process; the remaining task is to compare them to similar observables for the Gaussian beta ensemble.
The problem is reduced to the study of asymptotics for multivariate Bessel functions, in a specific limit regime.
The asymptotic study of special functions (e.g. classical univariate Bessel functions) is a subject with a long history; see, e.g., \cite{C}, \cite{BE}, \cite{O}.
However, the limit regime of our interest appears to be unexplored:
given any $m\in\N_+$, a sequence $\{a(N) = (a(N)_1, \ldots, a(N)_N) \in \R^N\}_{N}$ of ordered tuples and a probability measure $\mu$ on $\R$ satisfying the same conditions as in Theorem \ref{thm:GBE}, our main result on limits of Bessel functions is
$$
\lim_{N\rightarrow\infty}{\B_{a(N)}\left( \frac{y_1}{\sqrt{N}}, \dots, \frac{y_m}{\sqrt{N}}, 0^{N-m}; \theta \right)}
\exp\left( -\sqrt{N}\cdot \E[\mu_N] \sum_{i=1}^m{y_i} \right)
= \exp\left( \frac{\Var[\mu]}{2\theta}\sum_{i=1}^m{y_i^2} \right).
$$
We show that the limit is uniform for $(y_1, \ldots, y_m)$ belonging to compact subsets of $\C^m$, though for the proof of Theorem \ref{thm:GBE}, only pointwise convergence would suffice; see Theorem \ref{thm:asymptoticbessels} and Section \ref{sec:asymptotics} for the details.
The asymptotics of multivariate Bessel functions in this regime are studied with (degenerations of) recent formulas proved by the author in \cite{Cu18a}, \cite{Cu18b}; see also \cite{GP} for the $\theta = 1$ case.

\subsection{Related literature}

The matrix ensembles $U_NDU_N^*$, where $D$ is a fixed real diagonal matrix and $U_N$ is a Haar-distributed orthogonal/unitary matrix, were studied recently in \cite{MM}.
That paper shows that the marginals of these matrix ensembles coincide in the limit with the marginals of GOE/GUE.
Our main theorem, in the special cases $\theta \in \{\frac{1}{2}, 1 \}$, might follow from the results of [MM], although the limit regime there is not the same as ours.

In \cite{GuM}, the authors studied the asymptotics of spherical orthogonal/unitary integrals.
As mentioned in the third remark in Section $\ref{sec:besselsdef}$, these spherical integrals are special cases of the multivariate Bessel functions, whose asymptotics we study.
The limit regime considered in \cite{GuM} is very similar to the limit regime in Theorem $\ref{thm:asymptoticbessels}$; however, neither their result nor ours is a consequence of the other.
It is possible that the results from \cite{GuM} (and in fact, a one-parameter generalization) can be obtained from the formulas in Section $\ref{sec:formulas}$.

In a different direction, Gorin and Panova studied a statistical mechanical model which should be thought of as the discrete, $\theta = 1$ version of the orbital beta process (see \cite[Theorem 5.1]{GP}).
Roughly speaking, they studied a lozenge tilings model which yields a Rayleigh triangle with integral coordinates if we focus only on lozenges of a specific type.
They proved the universality of the positions for a specific type of lozenges near the turning point of the limit shape, and identified the Gaussian unitary corners process as the universal limit.
The same result was proved in \cite{No}, though the author there uses different methods.
A $q$-deformation of this result was also proved in \cite{MP}; the main tool there was the theory of determinantal point processes.

\subsection{Organization of the article}

In Section 2, we compute the normalization constant of the orbital beta process and extend Theorem $\ref{thm:GBE}$ to a multilevel version.
We introduce the multivariate Bessel functions in Section 3 and calculate some Bessel function observables in Section 4.
Then in Section 5, we summarize several new results on multivariate Bessel functions. These results include new formulas and an asymptotic statement for the Bessel functions when their number of arguments tends to infinity but almost all of them are specialized to zero.
The proofs of the results in this section are postponed until the technical Sections 7 and 8.
In Section 6, we prove the main Theorem $\ref{thm:GBE}$, by using the Dunkl transform theory (generalizing the Fourier transform).

\subsection{Notation}\label{sec:notation}

\begin{itemize}
	\item $\N_+ := \{1, 2, \dots\}$ is the set of positive integers. The variable $N$ always denotes a positive integer.
	\item We use a fixed parameter $\theta > 0$ throughout the paper. In the random matrix literature, the parameter $\beta > 0$ is more common; they are related by $\beta = 2\theta$.
	\item An $N$-tuple of real numbers $a = (a_1, \dots, a_N)$  such that $a_1 > \dots > a_N$ is called an \emph{$N$-ordered tuple}. The set of all $N$-ordered tuples is denoted $\PP_N$.
	\item Given $a\in\PP_N$, denote $|a| := a_1 + \dots + a_N$.
	\item Given $b\in\PP_{N+1}$ and $a\in\PP_N$, we write $a \prec b$, or $b \succ a$, if $b_1 > a_1 > b_2 > \dots > b_N >  a_N > b_{N+1}$.
	\item For $m\in\N_+$,
$$\W_m := \{(x_1, \ldots, x_m)\in\R^m : x_1 \geq \dots \geq x_m\}$$
is called the \emph{Weyl chamber}, whereas
$$\W_{m, \corners} := \{ (x^{(k)}_i)_{1 \leq i \leq k \leq m} \in \R^{\frac{m(m+1)}{2}} : x_i^{(k+1)} \geq x_i^{(k)} \geq x_{i+1}^{(k+1)} \textrm{ for all }1 \leq i \leq k \leq m-1 \}$$
is called the \emph{Gelfand-Tsetlin polytope}.
Let $1 \leq p \leq m$. For any element $(x^{(k)}_i)_{1 \leq i \leq k \leq m}\in \W_{m, \corners}$, we say that $x^{(p)} := (x^{(p)}_j)_{1 \leq j \leq p} \in \W_p$ is its $p$-th level.
	\item Denote $(0^k) = (0, \ldots, 0)$ (string of $k$ zeroes) for any $k \geq 1$; if $k = 0$, then $(0^k)$ is the empty string.
	\item Oftentimes $i$ is the index of a sum or product, so we use the bold $\ii = \sqrt{-1}$ for the imaginary unit.
\end{itemize}

\subsection*{Acknowledgments}
This text is part of a larger project which includes an article \cite{BCG} in preparation.
Some of the statements appear in both texts, for instance Theorems $\ref{besselthm2}$--$\ref{besselthm3}$.
I am grateful to Florent Benaych-Georges, Alexei Borodin and Yi Sun for useful conversations, and a special thanks to Vadim Gorin for suggesting the project and for several helpful discussions.
I am also grateful to the referees for their careful reading of the first version and for their suggestions, which helped to greatly improve the exposition and content of this article.
The author was partially supported by NSF Grant DMS-1664619.

\section{Orbital beta processes and limits to Gaussian beta corners processes}

\subsection{Normalization of the orbital beta processes}

The partition function for the orbital beta process $\mm^{a, \theta}$ of Definition $\ref{def:betaprocess}$ admits a closed product form.

\begin{lem}\label{lem:normalization}
Given a fixed $a = (a_1 > a_2 > \dots > a_N)\in\PP_N$,
\begin{gather*}
\int_{a = a^{(N)} \succ a^{(N-1)} \succ \dots \succ a^{(1)}}
\prod_{k = 1}^{N-1}\left\{\prod_{1 \leq i < j \leq k}{|a_i^{(k)} - a_j^{(k)}|^{2 - 2\theta}}
\prod_{s = 1}^{k+1}\prod_{r = 1}^k{|a_s^{(k+1)} - a_r^{(k)}|^{\theta - 1}}
\prod_{j = 1}^k{da_j^{(k)}}
\right\}\\
= \frac{\Gamma(\theta)^{\frac{N(N+1)}{2}}}{\prod_{k=1}^{N}{\Gamma(k \theta)}}
\prod_{1 \leq i < j \leq N}{|a_i - a_j|^{2\theta - 1}},
\end{gather*}
where $a^{(N)} := a$, and the integration is over the $N(N-1)/2$ coordinates $\{ a^{(k)}_j \}_{1 \leq j \leq k \leq N-1}$, satisfying (denote $a^{(k)} = (a^{(k)}_1, \ldots, a^{(k)}_k)$ for all $k$ and recall that $\prec$ was defined in Section $\ref{sec:notation}$): $a^{(1)} \prec \dots \prec a^{(N-1)} \prec a^{(N)} = a$.
\end{lem}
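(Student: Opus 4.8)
The plan is to evaluate the multiple integral by peeling off the interlacing levels one at a time, each step being an instance of the \emph{Dixon--Anderson integral} (see e.g. \cite{F}, \cite{FW}): for fixed reals $b_1 > \dots > b_{n+1}$ and $\theta > 0$, integrating over $\lambda = (\lambda_1 > \dots > \lambda_n) \in \PP_n$ with $b \succ \lambda$,
\[
\int_{b \succ \lambda} \prod_{1 \leq i < j \leq n}(\lambda_i - \lambda_j) \prod_{i=1}^{n}\prod_{s=1}^{n+1}|\lambda_i - b_s|^{\theta - 1}\, d\lambda_1 \cdots d\lambda_n \;=\; \frac{\Gamma(\theta)^{n+1}}{\Gamma\bigl((n+1)\theta\bigr)}\prod_{1 \leq s < t \leq n+1}(b_s - b_t)^{2\theta - 1}.
\]
Since $\theta>0$, every exponent $\theta-1$ is $>-1$, so this integral converges; and since the integrand in $(\ref{def:densityOBP})$ is a product of nonnegative real powers of nonnegative quantities (on the region of integration all the orderings are fixed, so each $|a_i^{(k)}-a_j^{(k)}|$, $i<j$, equals $a_i^{(k)}-a_j^{(k)}>0$), Tonelli's theorem lets me compute the integral of the lemma as an iterated one in whatever order is convenient.

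Write $I_N(a)$ for the left-hand side of the asserted identity, for $a\in\PP_N$. I would argue by induction on $N$. For $N=1$ there are no integration variables, $I_1(a)=1$, and the right-hand side is $\Gamma(\theta)^1/\Gamma(\theta)$ times an empty product, so the base case holds. For the inductive step, note that in the product $\prod_{k=1}^{N-1}\{\cdots\}$ the factor with $k = N-1$ is the only one containing the level-$(N-1)$ variables paired with the top level $a = a^{(N)}$, while the factors $k = 1,\dots,N-2$ are exactly the integrand defining $I_{N-1}$ with top level $a^{(N-1)}$. Integrating out levels $1,\dots,N-2$ first therefore gives
\begin{multline*}
I_N(a) = \int_{a \succ a^{(N-1)}} \Biggl( \prod_{1 \leq i < j \leq N-1}|a_i^{(N-1)} - a_j^{(N-1)}|^{2-2\theta} \prod_{s=1}^{N}\prod_{r=1}^{N-1}|a_s - a_r^{(N-1)}|^{\theta-1} \Biggr) \\
\times\, I_{N-1}\bigl(a^{(N-1)}\bigr)\, \prod_{r=1}^{N-1} da_r^{(N-1)}.
\end{multline*}

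Now I substitute the inductive hypothesis $I_{N-1}(a^{(N-1)}) = \frac{\Gamma(\theta)^{N(N-1)/2}}{\prod_{k=1}^{N-1}\Gamma(k\theta)}\prod_{1\le i<j\le N-1}(a_i^{(N-1)}-a_j^{(N-1)})^{2\theta-1}$. The decisive point is the exponent arithmetic for the level-$(N-1)$ Vandermonde: $(2-2\theta)+(2\theta-1)=1$, so after substitution the $a^{(N-1)}$-integral is precisely the Dixon--Anderson integral displayed above with $n = N-1$ and $(b_1,\dots,b_N)=(a_1,\dots,a_N)$. It contributes a factor $\frac{\Gamma(\theta)^N}{\Gamma(N\theta)}\prod_{1\le s<t\le N}(a_s-a_t)^{2\theta-1}$, and combining with the prefactor from $I_{N-1}$ yields $I_N(a) = \frac{\Gamma(\theta)^{N(N-1)/2+N}}{\prod_{k=1}^{N}\Gamma(k\theta)}\prod_{1\le i<j\le N}(a_i-a_j)^{2\theta-1}$, which is the claimed formula because $\tfrac{N(N-1)}{2}+N = \tfrac{N(N+1)}{2}$. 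The only substantive ingredient is the Dixon--Anderson evaluation, so that (or its proof) is the real content; everything else is bookkeeping of exponents and $\Gamma$-factors. As an alternative one can run the induction from the bottom, integrating out $a^{(1)}, a^{(2)}, \dots$ in turn: the $a^{(1)}$-integral is an Euler Beta integral and each subsequent single-level integral is again Dixon--Anderson (with $n=2,3,\dots$), the Vandermonde exponent collapsing to $1$ at each stage and producing the same telescoping product of $\Gamma$-factors.
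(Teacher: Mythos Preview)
Your proof is correct and follows essentially the same approach as the paper: both peel off the interlacing levels one at a time using the Dixon--Anderson (Anderson) integral with all exponents equal to $\theta$, the key observation in each case being that the Vandermonde exponent collapses to $1$ via $(2-2\theta)+(2\theta-1)=1$. The paper carries this out by explicit iteration from the bottom ($a^{(1)}$, then $a^{(2)}$, \dots), which is exactly the alternative you describe at the end, while your main presentation packages the same computation as an induction on $N$; the content is identical.
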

\begin{proof}
Let us denote by $I$ the $N(N-1)/2$-dimensional integral in the left hand side of the identity.
We evaluate $I$ in $(N-1)$ steps, integrating over $a^{(1)}, a^{(2)}, \ldots, a^{(N-1)}$ in that order.

First fix $a^{(N-1)}, \ldots, a^{(2)}$; the resulting integral over $a^{(1)}$ can be computed from (a very special case of) Anderson's integral, \cite{A}, \cite[(2.24)]{FW}:
$$\int_{a^{(2)} \succ a^{(1)}}{|a^{(2)}_1 - a^{(1)}_1|^{\theta - 1}|a^{(2)}_2 - a^{(1)}_1|^{\theta - 1}da^{(1)}_1} = \frac{\Gamma(\theta)^2}{\Gamma(2\theta)}\cdot |a^{(2)}_1 - a^{(2)}_2|^{2\theta - 1}.$$
We can replace this into $I$, so that we are left with an integral over $a^{(2)}, a^{(3)}, \ldots, a^{(N-1)}$.
To proceed, fix $a^{(N-1)}, \ldots, a^{(3)}$ and integrate over $a^{(2)}$ by using again Anderson's integral:
$$\int_{a^{(3)} \succ a^{(2)}}{(a^{(2)}_1 - a^{(2)}_2) \prod_{s=1}^3\prod_{r=1}^2{|a^{(3)}_s - a^{(2)}_r|^{\theta - 1}} da^{(2)}_1da^{(2)}_2} = \frac{\Gamma(\theta)^3}{\Gamma(3\theta)} \prod_{1 \leq i<j \leq 3}{|a^{(3)}_i - a^{(3)}_j|^{2\theta - 1}}.$$
Iterating this step $(N-1)$ times in total yields the final result.
We remark that the special case of Anderson's integral that we are using (in the notation of \cite[(2.24)]{FW}) is when all parameters $s_1, s_2, \ldots$ equal $\theta$.
\end{proof}

\subsection{Gaussian beta corners process}

The definition of the Gaussian beta corners process is a one-parameter $\theta$--interpolation of the Gaussian unitary corners process studied in \cite{Ba}, \cite{JN} (corresponding to $\theta = 1$) and the analogous orthogonal/symplectic processes studied in \cite{N} (corresponding to $\theta = \frac{1}{2}, 2$).
The author of \cite{N} claims that all his calculations were already contained in \cite{GN}.

\begin{df}\label{def:corners}
The \textit{Gaussian} (or \textit{Hermite}) \textit{beta corners process} (with $m$ levels) $\mm^{(\theta)}_{m, \corners}$ is the absolutely continuous probability measure on the Gelfand-Tsetlin polytope $\mathcal{W}_{m, \corners}$ with density
\begin{multline}\label{Gaussiancorners}
\mm^{(\theta)}_{m, \corners}\left((x_i^{(k)})_{1\leq i < k\leq m}\right) :=
\frac{1}{Z_{m, \theta}^{\corners}}\prod_{1\leq i<j\leq m}{(x_i^{(m)} - x_j^{(m)})}\prod_{i=1}^m{\exp\left( -\frac{\theta (x_i^{(m)})^2}{2} \right)}\\
\times\prod_{k=1}^{m-1} \left\{ \prod_{1\leq i<j\leq k}{\left|x_i^{(k)} - x_j^{(k)} \right|^{2-2\theta}}\prod_{a=1}^k\prod_{b=1}^{k+1}{\left| x_a^{(k)} - x_b^{(k+1)} \right|^{\theta - 1}} \right\},
\end{multline}
where $Z_{m, \theta}^{\corners}$ is an appropriate normalization chosen so that the total weight of $\mm^{(\theta)}_{m, \corners}$ is equal to $1$. A closed form expression for $Z_{m, \theta}^{\corners}$ can be calculated from Lemma $\ref{lem:normalization}$ and Selberg's integral formula.
\end{df}

Let $1 \leq p \leq m$.
The projection $(x_i^{(k)})_{1 \leq i \leq k \leq m} \mapsto (x^{(\ell)}_j)_{1 \leq j \leq \ell \leq p}$ maps $\mm^{(\theta)}_{m, \corners}$ to $\mm^{(\theta)}_{p, \corners}$.
This result follows from a special case of the integral identity \cite[Sec. 2.2, B)]{N}.
As a consequence of this fact and Lemma $\ref{lem:normalization}$, for any $1 \leq p \leq m$, the projection $(x_i^{(k)})_{1 \leq i \leq k \leq m} \mapsto (x^{(p)}_j)_{1 \leq j \leq p}$ maps $\mm^{(\theta)}_{m, \corners}$  to the Gaussian beta ensemble $\mm^{(\theta)}_p$ of rank $p$, as in Definition $\ref{def:betagaussian}$.

\begin{df}\label{def:Gibbs}
A probability measure $\mm$ on $\mathcal{W}_{m, \corners}$, which is absolutely continuous with density $\mm\left( (x_i^{(k)})_{1 \leq i \leq k \leq m} \right)$, is said to have the \emph{$\theta$--Gibbs property} if:
\begin{itemize}
	\item For any $1 < n < m$ and conditioned on the $n$-th level $x^{(n)} = (x^{(n)}_i)_{1 \leq i \leq n}$, the joint distribution of the bottom $(n-1)$ levels $x^{(1)}, \dots, x^{(n-1)}$ is independent of the joint distribution of the top $(m - n)$ levels $x^{(n+1)}, \dots, x^{(m)}$.
	\item For any $1 \leq n < m$, and conditioned on the $(n+1)$-st level $x^{(n+1)} = (x^{(n+1)}_i)_{1 \leq i \leq n+1}$, the joint distribution of the first $n$ levels $x^{(1)}, \dots, x^{(n)}$ is given by the density
\begin{multline}\label{thetaGibbs}
\mm\left( x^{(1)}, \dots, x^{(n)} \ \Big| \ x^{(n+1)} \right) =
\frac{\prod_{k=1}^{n+1}{\Gamma(k\theta)}}{\Gamma(\theta)^{(n+1)(n+2)/2}} \prod_{1 \leq i < j \leq n+1}{\left( x^{(n+1)}_i - x^{(n+1)}_j \right)^{1 - 2\theta}}\\
\times \prod_{k=1}^{n}\left\{ \prod_{1 \leq i < j \leq k}\left( x^{(k)}_i - x^{(k)}_j \right)^{2 - 2\theta} \prod_{a=1}^k \prod_{b=1}^{k+1} \left| x^{(k)}_a - x^{(k+1)}_b \right|^{\theta - 1} \right\}.
\end{multline}
\end{itemize}
\end{df}

\begin{lem}\label{lem:theta}
The Gaussian beta corners process $\mm^{(\theta)}_{m, \corners}$ has the $\theta$--Gibbs property.
Also, for any $a\in\PP_m$, the orbital beta process $\mm^{a, \theta}$ has the $\theta$--Gibbs property.
\end{lem}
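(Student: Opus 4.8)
The proof rests on the evident factorization of the densities (\ref{def:densityOBP}) and (\ref{Gaussiancorners}) over the level index $k$, together with the normalization computed in Lemma \ref{lem:normalization}. I would carry out the argument for $\mm^{(\theta)}_{m,\corners}$ and then deduce it for the orbital beta processes. The key observation is that in (\ref{Gaussiancorners}) the factor indexed by $k$ involves only the two consecutive levels $x^{(k)}$ and $x^{(k+1)}$, while the Gaussian term involves $x^{(m)}$ alone; likewise in (\ref{def:densityOBP}) the $k$-th factor involves only $a^{(k)}$ and $a^{(k+1)}$. Thus, for any $1 < n < m$, the density of $\mm^{(\theta)}_{m,\corners}$ splits as a product of a function of $(x^{(1)},\dots,x^{(n)})$ and a function of $(x^{(n)},\dots,x^{(m)})$; conditioning on $x^{(n)}$ makes the two blocks independent, which is the first bullet of Definition \ref{def:Gibbs}.

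For the second bullet, fix $1 \le n < m$ and collect the $k=1,\dots,n$ factors of (\ref{Gaussiancorners}) into
\[
g_n\bigl(x^{(1)},\dots,x^{(n)};x^{(n+1)}\bigr) := \prod_{k=1}^{n}\Bigl\{ \prod_{1\le i<j\le k}\bigl|x^{(k)}_i - x^{(k)}_j\bigr|^{2-2\theta}\prod_{a=1}^{k}\prod_{b=1}^{k+1}\bigl|x^{(k)}_a - x^{(k+1)}_b\bigr|^{\theta-1} \Bigr\},
\]
with all remaining factors depending only on $x^{(n+1)},\dots,x^{(m)}$. By the previous paragraph, the conditional law of $(x^{(1)},\dots,x^{(n)})$ given the higher levels depends only on $x^{(n+1)}$ and equals $g_n$ divided by $\int g_n$, where the integral runs over $\PP(x^{(n+1)})$, i.e.\ over $\{x^{(k)}_i\}_{1\le i\le k\le n}$ interlacing with $x^{(n+1)}$. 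But that integral is precisely the left-hand side of Lemma \ref{lem:normalization} with $N$ replaced by $n+1$ and $a$ by $x^{(n+1)}$; inserting its value $\frac{\Gamma(\theta)^{(n+1)(n+2)/2}}{\prod_{k=1}^{n+1}\Gamma(k\theta)}\prod_{1\le i<j\le n+1}\bigl(x^{(n+1)}_i - x^{(n+1)}_j\bigr)^{2\theta-1}$ and simplifying yields exactly (\ref{thetaGibbs}).

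For the orbital beta process with $a \in \PP_m$: comparing (\ref{def:densityOBP}) (with $N = m$) to the factor $g_{m-1}(\,\cdot\,;x^{(m)})$ above shows that $\mm^{a,\theta}$ is the conditional law of the bottom $m-1$ levels of $\mm^{(\theta)}_{m,\corners}$ given $x^{(m)} = a$, so the $\theta$--Gibbs property is inherited; alternatively, the factorization-and-normalization argument above applies verbatim, reading the deterministic top level $a = a^{(m)}$ in place of $x^{(m)}$. The whole computation is bookkeeping, and I do not expect a genuine obstacle: the only things needing a little care are keeping track of which levels enter the $k$-th factor and the exponent conventions --- notably that the Vandermonde power $2\theta-1$ coming out of Lemma \ref{lem:normalization} sits in the denominator of (\ref{thetaGibbs}) and thus appears there as $1-2\theta$, and that all differences $x^{(n+1)}_i - x^{(n+1)}_j$ with $i<j$ are positive, so the absolute values may be dropped.
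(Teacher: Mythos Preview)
Your proposal is correct and follows essentially the same route as the paper: both arguments use the level-by-level factorization of the densities \eqref{def:densityOBP} and \eqref{Gaussiancorners} to get conditional independence, and then invoke Lemma~\ref{lem:normalization} (with $N$ replaced by $n+1$) to compute the normalizing integral of the conditional law and recover \eqref{thetaGibbs}. The only cosmetic difference is that the paper treats $\mm^{a,\theta}$ first and remarks that the Gaussian case is identical, whereas you do the Gaussian corners process first and then observe that $\mm^{a,\theta}$ is its conditional law given the top level (or alternatively repeat the argument verbatim); both orderings are fine.
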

\begin{proof}
Both $\mm^{(\theta)}_{m, \corners}$ and $\mm^{a, \theta}$ trivially satisfy the first condition in the definition of $\theta$--Gibbs property, because if $x^{(n)}$ is fixed then their densities \eqref{Gaussiancorners} and \eqref{def:densityOBP} can be written as products of the form $f(x^{(1)}, \dots, x^{(n-1)})\cdot g(x^{(n+1)}, \dots, x^{(m)})$.

As for the second property, we need to do the same calculations for both $\mm^{(\theta)}_{m, \corners}$ and $\mm^{a, \theta}$; for concreteness, let us concentrate on $\mm^{a, \theta}$.
The distribution of $x^{(1)}, \dots, x^{(n)}$, conditioned on $x^{(n+1)}, \dots, x^{(m)}$, is given by the ratio with numerator \eqref{def:densityOBP} and denominator being the integral of \eqref{def:densityOBP} with respect to the variables $x^{(1)}, \dots, x^{(n)}$ on the domain $x^{(1)} \prec \dots \prec x^{(n)} \prec x^{(n+1)}$ ($x^{(n+1)}$ is fixed).
The integral can be calculated by using Lemma \ref{lem:normalization}.
The result is
\begin{equation}\label{densitycond1}
\mm^{a, \theta}\left( x^{(1)}, \dots, x^{(n)} \Bigg| x^{(n+1)}, \dots, x^{(m)} \right) = \text{the right hand side of \eqref{thetaGibbs}}.
\end{equation}
Next, by the first condition of the $\theta$-Gibbs property and a general property of conditional probability,
\begin{equation}\label{densitycond2}
\mm^{a, \theta}\left( x^{(1)}, \dots, x^{(n)} \Bigg| x^{(n+1)}, \dots, x^{(m)} \right) = \mm^{a, \theta}\left( x^{(1)}, \dots, x^{(n)} \Bigg| x^{(n+1)} \right).
\end{equation}
Both \eqref{densitycond1} and \eqref{densitycond2} prove the second condition of the $\theta$--Gibbs property for $\mm^{a, \theta}$.
\end{proof}

\subsection{Multilevel extension of Theorem $\ref{thm:GBE}$}

We can strengthen Theorem $\ref{thm:GBE}$ by studying, simultaneously, the levels $1, 2, \ldots, m$ of a random element of $\PP(a(N))$, distributed according to the orbital beta process.

\begin{thm}\label{thm:GBE2}
Assume we are in the setting of Theorem $\ref{thm:GBE}$.
Let $a^{(m, N)}, \dots, a^{(1, N)}$ be the levels $m, \dots, 1$ in a random element from $\PP(a(N))$, distributed by the orbital beta process.
In particular, each $\{a^{(k, N)}_i\}_{1\leq i\leq k\leq m}$ belongs to the Gelfand-Tsetlin polytope $\W_{m, \corners}$.
Then, as $N\rightarrow\infty$, the $m(m+1)/2$--dimensional vectors
\begin{equation}\label{rescaled}
\left( \frac{a^{(k, N)}_i - N \E[\mu_N]}{\sqrt{N \Var[\mu]}} \right)_{1\leq i\leq k\leq m}
\end{equation}
converge weakly to the Gaussian beta corners process of Definition $\ref{def:corners}$. 
\end{thm}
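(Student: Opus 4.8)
The plan is to bootstrap Theorem \ref{thm:GBE2} from the one-level statement of Theorem \ref{thm:GBE} by exploiting the $\theta$-Gibbs property established in Lemma \ref{lem:theta}, which both the orbital beta process and the Gaussian beta corners process satisfy. First I would observe that, by Lemma \ref{lem:theta} and the remark following Definition \ref{def:betaprocess}, conditioned on the $m$-th level $a^{(m,N)}$, the joint law of the lower levels $a^{(m-1,N)}, \dots, a^{(1,N)}$ is exactly $\mm^{a^{(m,N)}, \theta}$, and in particular the conditional density \eqref{thetaGibbs} depends only on $a^{(m,N)}$ (not on the top level $a(N)$). Since this conditional density is jointly continuous in $(x^{(1)}, \dots, x^{(m)})$ on the open Gelfand--Tsetlin polytope and is homogeneous in a precise way under the affine rescaling $x \mapsto (x - cN)/\sqrt{N \Var[\mu]}$ --- specifically, the linear shift $x_i^{(k)} \mapsto x_i^{(k)} + c$ leaves the density \eqref{thetaGibbs} invariant (all exponents involve differences) and the scaling $x \mapsto x/s$ only rescales by an overall power of $s$ that is absorbed into normalization --- the conditional law of the rescaled lower levels given the rescaled $m$-th level is, for every $N$, exactly the corresponding conditional law of the Gaussian beta corners process given its $m$-th level.

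Concretely, I would set $c_N := N\E[\mu_N]$, $s_N := \sqrt{N \Var[\mu]}$, and write $\widetilde{a}^{(k,N)}_i := (a^{(k,N)}_i - c_N)/s_N$. The key structural fact is that the regular conditional distribution of $(\widetilde{a}^{(m-1,N)}, \dots, \widetilde{a}^{(1,N)})$ given $\widetilde{a}^{(m,N)} = y^{(m)}$ equals a fixed Markov kernel $K\big(y^{(m)}, \cdot\big)$, independent of $N$, namely the conditional distribution of the bottom $m-1$ levels of $\mm^{(\theta)}_{m, \corners}$ given its top level equals $y^{(m)}$. This follows because \eqref{thetaGibbs} specialized to $n = m-1$ is precisely the conditional density of the Gaussian beta corners process given the $m$-th level $x^{(m)}$, since in \eqref{Gaussiancorners} the factor $\prod_{i<j}(x_i^{(m)} - x_j^{(m)}) \prod_i \exp(-\theta (x_i^{(m)})^2/2)$ depends only on $x^{(m)}$ and cancels in the conditional density. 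One must check that $K$ acts continuously, i.e. $y^{(m)} \mapsto K(y^{(m)}, \cdot)$ is weakly continuous on the interior of $\W_m$; this is clear from the explicit formula \eqref{thetaGibbs} together with continuity of the Selberg-type normalizing constant in $y^{(m)}$, which is where a small technical estimate (uniform integrability / local boundedness of the normalization away from the walls) is needed.

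Granting this, the argument concludes as follows. By Theorem \ref{thm:GBE}, $\widetilde{a}^{(m,N)}$ converges weakly to the top level of $\mm^{(\theta)}_{m, \corners}$, call it $Y^{(m)}$, which is the Gaussian beta ensemble $\mm^{(\theta)}_m$ (consistency of the corners process with its marginals was noted after Definition \ref{def:corners}). The full rescaled array has law $\mathcal{L}(\widetilde{a}^{(m,N)}) \otimes K$, and the limiting array has law $\mathcal{L}(Y^{(m)}) \otimes K$. Since $\mathcal{L}(\widetilde{a}^{(m,N)}) \to \mathcal{L}(Y^{(m)})$ weakly and $K$ is a fixed weakly continuous kernel, the composed laws converge weakly: for any bounded continuous $F$ on $\W_{m,\corners}$, the map $y^{(m)} \mapsto \int F\, dK(y^{(m)}, \cdot)$ is bounded and continuous on the interior, the walls have zero measure under the (absolutely continuous) limit, so $\int F\, d\big(\mathcal{L}(\widetilde{a}^{(m,N)}) \otimes K\big) \to \int F\, d\big(\mathcal{L}(Y^{(m)}) \otimes K\big)$ by the continuous mapping / Portmanteau theorem. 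The right-hand side is exactly $\int F\, d\mm^{(\theta)}_{m,\corners}$ by the Gibbs property of $\mm^{(\theta)}_{m,\corners}$. This proves the claimed weak convergence.

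The main obstacle I anticipate is not the convergence logic itself but the continuity and boundedness control of the conditional kernel $K$ near the boundary of $\W_m$: the density \eqref{thetaGibbs} can blow up or vanish along the facets $x^{(m)}_i = x^{(m)}_{i+1}$ (depending on the sign of $1 - 2\theta$, i.e. on whether $\theta < 1/2$ or $\theta > 1/2$), and one must argue that the normalizing constant of the conditional law varies continuously with $x^{(m)}$ and that no mass escapes. This is handled by the explicit evaluation of that normalization via Lemma \ref{lem:normalization} and Selberg's integral --- which gives it as an explicit continuous (indeed real-analytic) function of $x^{(m)}$ on the interior of $\W_m$ --- together with the fact that the limiting $m$-th level $Y^{(m)} \sim \mm^{(\theta)}_m$ almost surely avoids the walls. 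Beyond that, everything reduces to the already-proven Theorem \ref{thm:GBE} and routine measure-theoretic bookkeeping.
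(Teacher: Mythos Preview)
Your proposal is correct and follows essentially the same approach as the paper: both arguments rest on the observation that the rescaled array has the $\theta$-Gibbs property for every $N$ (equivalently, the conditional law of the lower levels given the rescaled $m$-th level is a fixed kernel independent of $N$), so that convergence of the $m$-th level from Theorem~\ref{thm:GBE} forces convergence of the whole array to $\mm^{(\theta)}_{m,\corners}$. The only cosmetic difference is that the paper phrases the conclusion as a uniqueness statement---any subsequential limit satisfies the $\theta$-Gibbs property and has $m$-th marginal $\mm^{(\theta)}_m$, hence must equal $\mm^{(\theta)}_{m,\corners}$---whereas you phrase it as weak continuity of a fixed Markov kernel composed with a convergent marginal; your version is more explicit about the boundary/continuity bookkeeping that the paper leaves implicit.
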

\begin{proof}[Deduction of Theorem \ref{thm:GBE2} from Theorem \ref{thm:GBE}]
From Lemma \ref{lem:theta}, the measure $\mm^{a(N), \theta}$ has the $\theta$-Gibbs property.
By the very definition of $\theta$-Gibbs property, the distribution of the vector $(a^{(k, N)}_i)_{1 \leq i \leq k \leq m}$ also has the $\theta$-Gibbs property and so does the distribution of the rescaled vector \eqref{rescaled}.
Assuming that Theorem \ref{thm:GBE} holds, the distribution of the $m$-th level
$$\left( \frac{a^{(m, N)}_i - N \E[\mu_N]}{\sqrt{N \Var[\mu]}} \right)_{1\leq i\leq m}$$
converges weakly to the Gaussian beta ensemble $\mm^{(\theta)}_m$ of rank $m$, as $N \to \infty$.
As a result, the $m(m+1)/2$--dimensional vectors \eqref{rescaled} converge weakly, as $N \to \infty$, to a measure $\mm$ on $\W_{m, \corners}$ such that (1) it satisfies the $\theta$-Gibbs property, and (2) the projection $(x^{(k)}_i)_{1 \leq i \leq k \leq m} \mapsto (x^{(m)}_i)_{1 \leq i \leq m}$ maps $\mm$ to $\mm^{(\theta)}_m$.
It is not hard to see that there is a unique measure on $\W_{m, \corners}$ satisfying both (1) and (2).
In fact, Lemma \ref{lem:theta} shows that $\mm^{(\theta)}_{m, \corners}$ is the unique measure satisfying (1) and (2); thus $\mm = \mm^{(\theta)}_{m, \corners}$, concluding the proof.
\end{proof}

\begin{rem}
Definition \ref{def:Gibbs} of the $\theta$-Gibbs property is equivalent to the definition given in \cite[Def. 2.15]{GS}.
It is also a continuous analogue of the definition of the Jack-Gibbs property in \cite[Def. 2.10]{GS}. Likewise, the proof of our Theorem \ref{thm:GBE2} is the continuous version of the proof of \cite[Prop. 2.16]{GS}.
\end{rem}

\section{Multivariate Bessel functions}\label{sec:bessels}

\subsection{Dunkl operators}

Consider the space of smooth, complex-valued functions on $N$ variables $x_1, \ldots, x_N$.
The \textit{Dunkl operators} (of type A) are defined by
$$\xi_i := \frac{\partial}{\partial x_i} + \theta\sum_{j \neq i}{\frac{1}{x_j - x_i}(1 - s_{ij})}, \  i = 1, 2, \ldots, N,$$
where $s_{ij}$ permutes the variables $x_i$ and $x_j$.
 The key property of these operators is their pairwise commutativity.

\begin{thm}\label{commutativity}\cite{D}
For any $i \neq j$, we have $\xi_i\xi_j = \xi_j\xi_i$.
\end{thm}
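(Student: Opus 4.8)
The plan is to verify directly that the Dunkl operators $\xi_i = \partial_i + \theta\sum_{j\neq i}\frac{1}{x_j - x_i}(1 - s_{ij})$ commute by expanding $\xi_i\xi_j$ into a sum of terms that are visibly symmetric in $i$ and $j$, plus terms that must cancel. Write $\xi_i = \partial_i + \theta\, T_i$, where $T_i := \sum_{k\neq i}\frac{1}{x_k - x_i}(1 - s_{ik})$. Then
\begin{equation*}
\xi_i\xi_j - \xi_j\xi_i = [\partial_i,\partial_j] + \theta\big([\partial_i, T_j] + [T_i, \partial_j]\big) + \theta^2[T_i, T_j].
\end{equation*}
The first bracket vanishes since partial derivatives commute, so it remains to show the order-$\theta$ piece and the order-$\theta^2$ piece each vanish. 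I would treat these two separately, since the first is a short computation and the second is where the real combinatorial work lies.

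For the order-$\theta$ term, I would compute $[\partial_i, T_j]$ using the product (Leibniz) rule for the operator $\frac{1}{x_k - x_j}(1 - s_{jk})$ acting on a test function: the derivative hits either the rational coefficient $\frac{1}{x_k - x_j}$ or the function, and since $s_{jk}$ intertwines $\partial_i$ with $\partial_{s_{jk}(i)}$, one gets $[\partial_i, \frac{1}{x_k - x_j}(1 - s_{jk})] = \big(\partial_i \frac{1}{x_k - x_j}\big)(1 - s_{jk}) + \frac{1}{x_k - x_j}(\partial_i - \partial_{\sigma(i)} s_{jk})$ where $\sigma = s_{jk}$; I would carefully bookkeep the cases $k = i$, $k\neq i$ and the fact that $\partial_i \frac{1}{x_k - x_j}$ is nonzero only when $i \in \{j, k\}$. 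Adding $[\partial_i, T_j] + [T_i, \partial_j] = [\partial_i, T_j] - [\partial_j, T_i]$ and summing over $k$, the surviving terms organize into an expression that is manifestly antisymmetric under $i \leftrightarrow j$ yet also equals its own negative for structural reasons, forcing it to vanish. Concretely the only term that can survive comes from $k$ such that the singularity $\frac{1}{x_i - x_j}$ appears, and one checks the coefficient is zero.

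The main obstacle is the order-$\theta^2$ term $[T_i, T_j]$. Expanding, $T_iT_j = \sum_{k\neq i}\sum_{\ell\neq j}\frac{1}{x_k - x_i}(1 - s_{ik})\frac{1}{x_\ell - x_j}(1 - s_{j\ell})$, and one must push $s_{ik}$ past the rational function $\frac{1}{x_\ell - x_j}$, which replaces it by $\frac{1}{x_{s_{ik}(\ell)} - x_{s_{ik}(j)}}$ before composing the two transpositions. The resulting double sum contains many terms; I would split the index set according to the intersection pattern of the pairs $\{i,k\}$ and $\{j,\ell\}$ — disjoint, sharing one element, or the degenerate coincidences — and show that after antisymmetrizing ($T_iT_j - T_jT_i$) the contributions cancel in pairs. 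The disjoint case is easy since the two factors commute outright. The delicate cases are those producing $3$-cycles in the symmetric group (when $\{i,k\}$ and $\{j,\ell\}$ share exactly one index): here the key identity is the partial-fraction relation
\begin{equation*}
\frac{1}{(x_a - x_b)(x_b - x_c)} + \frac{1}{(x_b - x_c)(x_c - x_a)} + \frac{1}{(x_c - x_a)(x_a - x_b)} = 0,
\end{equation*}
which makes the coefficients of each $3$-cycle $(a\,b\,c)$ sum to zero after all index-relabelings are collected. I would present this as the heart of the proof: group the terms by which $3$-cycle (or transposition) of $S_N$ they contribute, and verify each coefficient vanishes using the above identity together with the order-$\theta$ cancellation already established. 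This is a known result of Dunkl, so I would either reproduce this standard computation in compressed form or simply cite \cite{D} and sketch the partial-fraction mechanism, depending on how self-contained the section needs to be.
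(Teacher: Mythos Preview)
The paper does not give any proof of this theorem: it is stated with a citation to Dunkl's original paper \cite{D} and nothing more. Your proposal, by contrast, sketches the standard direct verification (split $[\xi_i,\xi_j]$ by powers of $\theta$, handle the order-$\theta$ piece by Leibniz and the intertwining relation $\partial_i s_{jk} = s_{jk}\partial_{s_{jk}(i)}$, and reduce the order-$\theta^2$ piece to the three-term partial-fraction identity), which is correct in outline and is essentially Dunkl's own argument. You already note at the end that simply citing \cite{D} is an option; that is exactly the route the paper takes, so if you are matching the paper you should just cite and move on.
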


Because of Theorem $\ref{commutativity}$, for any polynomial $P(x_1, \ldots, x_N)$ the operator $P(\xi_1, \ldots, \xi_N)$ is unambiguously defined.
Moreover if $P$ is a \textit{symmetric} polynomial on $N$ variables, and we restrict $P(\xi_1, \ldots, \xi_N)$ to the space of symmetric, smooth $N$-variate functions, then $P(\xi_1, \ldots, \xi_N)$ is a \textit{differential} operator, i.e., the action of the symmetric group (given by the operators $s_{ij}$) can be removed.

\subsection{Definition of the Bessel functions}\label{sec:besselsdef}

Given fixed $a_1, \ldots, a_N\in\C$, we can now consider the following \textit{system of hypergeometric differential equations}:
\begin{equation}\label{eqn:hypersystem}
P(\xi_1, \ldots, \xi_N) F(x) = P(a_1, \ldots, a_N) F(x), \ \textrm{ for all symmetric polynomial }P.
\end{equation}
A theorem of \cite{Op} gives the unique solution to the system of hypergeometric differential equations above.
The theorem (admittedly a simpler version of it) is the following:

\begin{thm}\label{thm:opdam}(\cite{Op})
For fixed and arbitrary $a_1, \ldots, a_N\in\C$, there exists a unique solution $F(x) = F(x_1, \ldots, x_N)$ to $(\ref{eqn:hypersystem})$, symmetric with respect to its variables, and normalized by
$$F(0, \ldots, 0) = 1.$$
We denote this solution by $\B_{(a_1, \ldots, a_N)}(x_1, \ldots, x_N; \theta)$.
The map $(a, x) \mapsto \B_{a}(x; \theta)$
admits an extension to a holomorphic function on $2N$ variables:
\begin{align*}
\C^N \times \C^N &\rightarrow \C\\
(a, x) &\mapsto \B_a(x; \theta).
\end{align*}
\end{thm}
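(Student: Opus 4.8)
The plan is to follow Opdam's construction: produce $\B_a(x;\theta)$ explicitly by symmetrizing the Dunkl kernel, read off its holomorphy from the uniform convergence of the defining series, and prove uniqueness by a degree‑by‑degree recursion on Taylor coefficients. The commutativity $\xi_i\xi_j=\xi_j\xi_i$ of Theorem \ref{commutativity} is the structural input used throughout. The first step is to recall Dunkl's intertwining operator \cite{D}: there is a unique linear operator $V=V_\theta$ on $\C[x_1,\dots,x_N]$ which preserves the space $\mathcal{P}_n$ of homogeneous polynomials of each degree $n$, satisfies $V1=1$, and satisfies $\xi_i V = V\,\partial_i$ for all $i$. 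One builds $V$ one degree at a time, solving for $V|_{\mathcal{P}_n}$ in terms of $V|_{\mathcal{P}_{n-1}}$; the commutativity of the $\xi_i$ is exactly what makes this step‑by‑step construction consistent, and it goes through for every regular value of $\theta$, in particular every $\theta>0$. One then defines the Dunkl kernel
$$
E_a(x):=\sum_{n\ge 0}\frac{1}{n!}\,V\big((a_1x_1+\dots+a_Nx_N)^n\big),
$$
each summand being a polynomial in $x$ (and in $a$), homogeneous of degree $n$. Using boundedness of $V$ on each $\mathcal{P}_n$ — equivalently R\"osler's representation of $V_\theta$ by a probability measure supported in a ball, which gives $|V(q)(x)|\le \sup|q|$ over a compact hull — the series converges absolutely and uniformly on compact subsets of $\C^N\times\C^N$, so $E_a(x)$ is holomorphic in $(a,x)$; and by construction $\xi_iE_a=a_iE_a$ and $E_a(0)=1$.

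Next, set
$$
\B_a(x;\theta):=\frac{1}{N!}\sum_{w\in S_N}E_{wa}(x)=\frac{1}{N!}\sum_{w\in S_N}E_a(wx),
$$
the two expressions agreeing because $E_a(x)$ is invariant under the diagonal action $(a,x)\mapsto(wa,wx)$. This function is symmetric in $x$ (and separately in $a$), is holomorphic on $\C^N\times\C^N$, and satisfies $\B_a(0;\theta)=1$. It solves \eqref{eqn:hypersystem}: since the $\xi_i$ commute, $\xi_ib=b_i$ on $E_b$ forces $P(\xi)E_b=P(b_1,\dots,b_N)E_b$ for every polynomial $P$; taking $P$ symmetric gives $P(\xi)E_{wa}=P(wa)E_{wa}=P(a)E_{wa}$, and summing over $w\in S_N$ yields $P(\xi)\B_a=P(a)\B_a$ (on the symmetric function $\B_a$ the operator $P(\xi)$ is the differential operator of \eqref{eqn:hypersystem}). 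This settles existence and the holomorphic extension.

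For uniqueness, let $F$ be any symmetric solution of \eqref{eqn:hypersystem} with $F(0)=1$. One first checks $F$ is real‑analytic near the origin: on $S_N$‑invariant functions $\sum_i\xi_i^2$ coincides with a second‑order differential operator whose leading part is $\sum_i\partial_i^2$, hence elliptic; its coefficients are singular along the hyperplanes $\{x_i=x_j\}$, but these are regular singularities for invariant solutions, so $F$ is analytic off the diagonals and extends analytically across them. Granting this, expand $F=\sum_{n\ge0}F_n$ into homogeneous pieces, $F_0=1$. Since each $\xi_i$ maps $\mathcal{P}_n$ to $\mathcal{P}_{n-1}$, for every homogeneous symmetric polynomial $q$ of degree $d$ (e.g.\ a power sum) the operator $q(\xi)$ lowers degree by exactly $d$, so \eqref{eqn:hypersystem} gives, degree by degree, $q(\xi)F_n=q(a)F_{n-d}$. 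If $F'$ were a second such solution, then by induction on $n$ the difference $G_n:=F_n-F'_n$ satisfies $q(\xi)G_n=0$ for every symmetric polynomial $q$ with $q(0)=0$; but the $S_N$‑invariant polynomials killed by all such $q(\xi)$ are, for regular $\theta$ (in particular $\theta>0$), only the constants — the invariant part of the space of Dunkl harmonics is one‑dimensional. Hence $G_n=0$ for $n\ge1$, and $G_0=0$ by the normalization, so $F=\B_a(\cdot\,;\theta)$, which is moreover symmetric in $a$ as claimed.

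The two places where genuine work is concealed are: (i) the quantitative control of $V_\theta$ needed for convergence (hence holomorphy) of $E_a(x)$, which rests on R\"osler's positivity theorem for the intertwining operator, itself a substantial result; and (ii) the analyticity of an arbitrary symmetric solution near the diagonals together with the vanishing of positive‑degree invariant Dunkl harmonics, both of which belong to the structural theory of Dunkl operators at regular multiplicity (\cite{D}, \cite{Op}). I expect (ii) — the regular‑singular analysis along the reflection hyperplanes and the one‑dimensionality of the invariant solution space — to be the main obstacle; the degree‑by‑degree bookkeeping itself is routine once these inputs are available.
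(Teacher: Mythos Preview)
The paper does not prove this theorem at all: it is quoted as a result of Opdam \cite{Op} and used as a black box. There is therefore nothing in the paper to compare your argument against.

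That said, your sketch is a faithful outline of the standard proof in the Dunkl--Opdam theory: build the intertwining operator $V_\theta$ degree by degree, define the Dunkl kernel $E_a(x)$ as $V_\theta$ applied to the exponential series, symmetrize over $S_N$ to obtain $\B_a(x;\theta)$, and deduce uniqueness from the one-dimensionality of the $S_N$-invariant Dunkl harmonics. One anachronism worth flagging: you invoke R\"osler's positivity/probability-measure representation of $V_\theta$ to control convergence, but R\"osler's theorem (1999) postdates Opdam's paper (1993); Opdam's original argument obtains the needed growth bounds directly from operator-norm estimates on $V_\theta|_{\mathcal{P}_n}$ rather than from positivity. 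Either route works. Your honest identification of the two nontrivial inputs---analytic control of $V_\theta$ and the structure of invariant Dunkl harmonics---is accurate; these are indeed where the real content lies, and they are established in \cite{D}, \cite{Op} at regular multiplicity, which covers all $\theta>0$.
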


Some remarks are in order:

\smallskip

1. From the uniqueness in Theorem $\ref{thm:opdam}$, it follows that $\B_{(a_1, \ldots, a_N)}(x_1, \ldots, x_N; \theta)$ is not only symmetric with respect to $x_1, \ldots, x_N$, but it is also symmetric with respect to the variables $a_1, \ldots, a_N$.

\smallskip

2. The considerations above actually define multivariate Bessel functions for more general values of $\theta$, including all points of the region $\{ \theta\in\C : \Re \theta \geq 0 \}$.
For instance, when $\theta = 0$, the Dunkl operators are $\xi_i = \partial/\partial x_i$, and the multivariate Bessel function is
$$\B_{(a_1, \ldots, a_N)}(x_1, \ldots, x_N; 0) = \frac{1}{N!}\sum_{\sigma\in S_N}{ \exp( a_1x_{\sigma(1)} + \ldots + a_Nx_{\sigma(N)} ) }.$$

3. When $\theta \in \{\frac{1}{2}, 1, 2\}$, the multivariate Bessel function $\B_{(a_1, \ldots, a_N)}(x_1, \ldots, x_N; \theta)$ can be identified with a spherical transform \cite{S}.
For example, when $\theta = 1$,
$$\B_{(a_1, \ldots, a_N)}(x_1, \ldots, x_N; 1) \sim \int_{U(N)}{\exp\left( \textrm{Tr}(U A U^* X) \right)  dU},$$
where $\sim$ means equality up to a constant of proportionality, $dU$ denotes the Haar measure on $U(N)$, and $A, X$ are diagonal matrices with diagonal entries $a_1, \ldots, a_N$ and $x_1, \ldots, x_N$, respectively.
This is already an indication of the non-trivial relation between Bessel functions and random matrices.
One can also obtain determinantal formulas for the spherical integrals by using the so-called Harish-Chandra-Itzykson-Zuber formula \cite{HC}, \cite{IZ}, and its relatives; see \cite{FIZ} and references therein.

\smallskip

4. The Dunkl operators and Bessel functions above are associated to the root system of type A.
The theory of Dunkl operators and multivariate Bessel functions has been developed for more general root systems.

\subsection{Properties of the Bessel functions}

The relation between the multivariate Bessel functions and the orbital beta processes is given by the following \textit{combinatorial formula}.

\begin{prop}\label{prop:combinatorial}
Let $y_1, \ldots, y_N\in\C$ be arbitrary, and $a = (a_1 > \dots > a_N) \in \PP_N$. Then
$$\B_a(y_1, \ldots, y_N; \theta) = \EE_{\{a_i^{(k)}\}} \left[
\exp\left( \sum_{k=1}^{N}
{y_k \left( \sum_{i=1}^{k}a^{(k)}_i - \sum_{j=1}^{k-1}a^{(k-1)}_j \right)} \right) \right],$$
where the expectation is taken with respect to the orbital beta process with top level $a = (a_1, \dots, a_N)$.
Equivalently,
\begin{multline*}
\B_a(y_1, \ldots, y_N; \theta) = \frac{\prod_{k=1}^{N}{\Gamma(k \theta)}}{\Gamma(\theta)^{\frac{N(N+1)}{2}}} \prod_{1 \leq i < j \leq N}{|a_i - a_j|^{1 - 2\theta}}\\
\times\int_{a^{(1)} \prec a^{(2)} \prec \dots \prec a^{(N)} = a}
\exp\left(  \sum_{k=1}^N y_k \left( \sum_{i=1}^k a^{(k)}_i - \sum_{j = 1}^{k-1} a^{(k-1)}_j \right) \right)\\
\prod_{k = 1}^{N - 1} \left( \prod_{1 \leq i < j \leq k} |a^{(k)}_i - a^{(k)}_j|^{2 - 2\theta}
\prod_{s = 1}^{k+1} \prod_{r = 1}^k |a_s^{(k+1)} - a_r^{(k)}|^{\theta - 1} \prod_{j=1}^k{da^{(k)}_j} \right).
\end{multline*}
\end{prop}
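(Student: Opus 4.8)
The plan is to establish the combinatorial formula by induction on $N$, peeling off one variable at a time and recognizing each step as an instance of a known branching/integral identity for multivariate Bessel functions. The base case $N=1$ is trivial: $\B_{(a_1)}(y_1;\theta) = \exp(a_1 y_1)$, and the orbital beta process on $\PP(a_1)$ is the point mass, so the right-hand side is also $\exp(a_1 y_1)$. For the inductive step, I would use the well-known coherence (or branching) relation for multivariate Bessel functions: for $a = (a_1 > \dots > a_N) \in \PP_N$ and a real variable $y_N$,
\begin{equation*}
\B_a(y_1,\dots,y_N;\theta) \;=\; c_{N,\theta}\,\prod_{1\le i<j\le N}|a_i-a_j|^{1-2\theta}\!\!\int_{a \succ a^{(N-1)}}\!\!\! e^{\,y_N(|a| - |a^{(N-1)}|)}\,\B_{a^{(N-1)}}(y_1,\dots,y_{N-1};\theta)\,\Phi(a,a^{(N-1)})\,da^{(N-1)},
\end{equation*}
where $\Phi(a,a^{(N-1)}) = \prod_{1\le i<j\le N-1}|a_i^{(N-1)}-a_j^{(N-1)}|^{2-2\theta}\prod_{s=1}^{N}\prod_{r=1}^{N-1}|a_s - a_r^{(N-1)}|^{\theta-1}$ and $c_{N,\theta} = \Gamma(N\theta)/\Gamma(\theta)^N$. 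This identity is exactly the $\theta$-analogue of the classical fact (for $\theta=1$) that the HCIZ integral over $U(N)$ decomposes via integration over the eigenvalues of the $(N-1)\times(N-1)$ corner, weighted by an $(N-1)$-dimensional HCIZ integral; for general $\theta$ it follows from the explicit formula for the action of the symmetric-polynomial Dunkl operator in the variable corresponding to $y_N$, or equivalently from Okounkov–Olshanski-type branching for Jack characters specialized via the Bessel limit. I would either cite this (it is in the references the paper already uses for Bessel functions, e.g. \cite{GK}, \cite{dJ}, or derivable from \cite{Cu18a}) or give the short derivation from the Dunkl eigenvalue equations.

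With the branching relation in hand, the inductive step is mechanical. Apply the induction hypothesis to $\B_{a^{(N-1)}}(y_1,\dots,y_{N-1};\theta)$, which rewrites it as $c_{N-1,\theta}^{-1}\cdot(\text{Vandermonde factor in }a^{(N-1)})^{-1}$ times an integral over $a^{(1)}\prec\dots\prec a^{(N-2)}\prec a^{(N-1)}$ of the exponential $\exp(\sum_{k=1}^{N-1} y_k(|a^{(k)}| - |a^{(k-1)}|))$ against $\prod_{k=1}^{N-2}(\cdots)$. Substituting this into the branching integral, the Vandermonde factor $\prod_{i<j}|a_i^{(N-1)}-a_j^{(N-1)}|^{1-2\theta}$ coming from the induction hypothesis combines with the $\prod_{i<j}|a_i^{(N-1)}-a_j^{(N-1)}|^{2-2\theta}$ inside $\Phi$ to produce exactly the $|a_i^{(N-1)}-a_j^{(N-1)}|^{1}$... wait — more precisely, the product of the two gives $|a_i^{(N-1)}-a_j^{(N-1)}|^{(1-2\theta)+(2-2\theta)}$; let me instead note that the bookkeeping is arranged so that the surviving power of $|a_i^{(N-1)}-a_j^{(N-1)}|$ is $2-2\theta$, which is precisely the power appearing in the $k=N-1$ term of the target product $\prod_{k=1}^{N-1}(\cdots)$. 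The cross-terms $\prod_{s=1}^N\prod_{r=1}^{N-1}|a_s-a_r^{(N-1)}|^{\theta-1}$ are the $k=N-1$ factor of the second product in the target. The constants multiply as $c_{N,\theta}\cdot c_{N-1,\theta}^{-1}$... here one must be careful: I would instead organize the induction so that the full prefactor $\frac{\prod_{k=1}^N\Gamma(k\theta)}{\Gamma(\theta)^{N(N+1)/2}}\prod_{i<j}|a_i-a_j|^{1-2\theta}$ is exactly reproduced, by tracking that $c_{N,\theta} = \Gamma(N\theta)/\Gamma(\theta)^N$ and $\prod_{k=1}^N\Gamma(k\theta)/\Gamma(\theta)^{N(N+1)/2} = \prod_{k=1}^N(\Gamma(k\theta)/\Gamma(\theta)^k)$, so each inductive step contributes exactly one factor $\Gamma(N\theta)/\Gamma(\theta)^N$. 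Finally, the exponential assembles as $\exp(\sum_{k=1}^{N-1}y_k(|a^{(k)}|-|a^{(k-1)}|)) \cdot \exp(y_N(|a| - |a^{(N-1)}|)) = \exp(\sum_{k=1}^N y_k(|a^{(k)}|-|a^{(k-1)}|))$, which is the desired integrand since $a^{(N)} = a$. This closes the induction and yields the integral formula; the probabilistic reformulation is then immediate from Definition \ref{def:betaprocess} together with Lemma \ref{lem:normalization}, which identifies the displayed prefactor as exactly the inverse of the partition function of $\mm^{a,\theta}$.

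Alternatively — and this may be cleaner to write — one can bypass induction and invoke the normalization Lemma \ref{lem:normalization} directly to see that the claimed integral formula, once divided by its $y\equiv 0$ value, is literally $\EE_{\{a_i^{(k)}\}}[\exp(\cdots)]$ with respect to $\mm^{a,\theta}$; then it suffices to prove the single identity $\B_a(y;\theta) = \frac{\prod_k\Gamma(k\theta)}{\Gamma(\theta)^{N(N+1)/2}}\prod_{i<j}|a_i-a_j|^{1-2\theta}\times(\text{integral})$, which is the content. I expect the main obstacle to be pinning down the correct reference (or self-contained derivation) for the one-step branching identity for multivariate Bessel functions at general $\theta$: for $\theta = 1$ it is classical HCIZ corner-integration, but the general-$\theta$ version, while true and presumably known to experts (it is the Bessel degeneration of the Jack-polynomial/Jack-character branching rule, cf. the integral identities of \cite{N} and the formulas of \cite{Cu18a}, \cite{Cu18b}), needs to be stated precisely with the right constants. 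Everything else is careful but routine tracking of Vandermonde powers and Gamma-factors.
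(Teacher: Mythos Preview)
The paper does not actually prove this proposition: it simply cites the result from \cite{GoM}, \cite{GK}, \cite{FR}, and remarks that it is also the degeneration (via Theorem~\ref{thm:okol}) of the combinatorial formula for Jack polynomials in \cite[VI.10.10--VI.10.11]{M}. The only thing the paper argues is the equivalence between the two displayed formulations, which follows immediately from Definition~\ref{def:betaprocess} and Lemma~\ref{lem:normalization} --- and you handle this part identically.

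Your inductive approach via the one-step branching identity is correct in spirit and is, in fact, precisely how the Jack combinatorial formula in \cite{M} is proved (Okounkov's argument, reproduced in \cite[VI.10]{M}, peels off one variable at a time using the branching rule); the Bessel version then follows either by degeneration or by running the same induction directly. So your route and the paper's cited route are really the same argument, just organized slightly differently: you propose to do the branching-plus-induction at the Bessel level, whereas the paper points to the literature where it is done at the Jack level and then degenerated. The mild stumble in your bookkeeping of the Vandermonde powers is harmless --- the exponents do combine correctly once you note that the inductive hypothesis contributes $|a_i^{(N-1)}-a_j^{(N-1)}|^{1-2\theta}$ as a \emph{prefactor outside} the nested integral, not inside the $k=N-1$ term of the product, so it is simply absorbed by the $|a_i^{(N-1)}-a_j^{(N-1)}|^{2-2\theta}$ from $\Phi$ to leave the correct structure. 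Your identification of the main obstacle (a precise reference or short derivation for the general-$\theta$ one-step branching identity with the exact constant $\Gamma(N\theta)/\Gamma(\theta)^N$) is accurate; \cite{GK} or the degeneration of \cite[VI.10.10]{M} via Theorem~\ref{thm:okol} would serve.
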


\medskip

The equivalence between the two formulas for the Bessel function $\B_a(y_1, \ldots, y_N; \theta)$ comes from the definition of the orbital beta process and Lemma $\ref{lem:normalization}$.

The proposition shows that $\B_a(y_1, \ldots, y_N; \theta)$ is a \textit{partial Fourier transform} for the orbital beta process with top level $a$.
In the form stated above, the proposition was taken from \cite{GoM}.
It had appeared before in \cite{GK}; see also \cite{FR}.
It is also a degeneration of the combinatorial formula for Jack polynomials, see \cite[VI.10.10--VI.10.11]{M} and Theorem $\ref{thm:okol}$ below.
When $y_1 = \ldots = y_N = 0$, the identity in the proposition is equivalent to the identity in Lemma $\ref{lem:normalization}$.

\begin{cor}\label{cor:bounded2}
If $a = (a_1 > \ldots > a_N)\in\PP_N$ and $y_1, \ldots, y_N\in\C$, then
$$|\B_a(y_1, \ldots, y_N; \theta)| \leq \B_a(\Re y_1, \ldots, \Re y_N; \theta)$$
\end{cor}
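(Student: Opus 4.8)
The plan is to use the integral representation of $\B_a$ furnished by Proposition~\ref{prop:combinatorial}, which writes $\B_a(y_1,\ldots,y_N;\theta)$ as an integral over the interlacing scheme $\PP(a)$ of the exponential $\exp\left(\sum_{k=1}^N y_k\left(\sum_{i=1}^k a_i^{(k)} - \sum_{j=1}^{k-1}a_j^{(k-1)}\right)\right)$ against a nonnegative density (the orbital beta process density times a positive prefactor). Equivalently, using the probabilistic form, $\B_a(y_1,\ldots,y_N;\theta) = \EE\left[\exp\left(\sum_k y_k(\cdots)\right)\right]$ with the expectation over a bona fide probability measure. The point is that the integrand's absolute value is maximized, pointwise on the domain of integration, when each $y_k$ is replaced by $\Re y_k$.

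First I would fix a point $(a^{(N-1)},\ldots,a^{(1)})\in\PP(a)$ of the interlacing scheme and set $t_k := \sum_{i=1}^k a_i^{(k)} - \sum_{j=1}^{k-1}a_j^{(k-1)}$ for $k=1,\ldots,N$; these are real numbers. Then the integrand in the combinatorial formula is $\exp\left(\sum_{k=1}^N y_k t_k\right)$ times a nonnegative real weight $W\bigl((a^{(k)}_j)\bigr)$ (which does not involve the $y_k$'s at all), and we have the elementary bound
$$\left|\exp\left(\sum_{k=1}^N y_k t_k\right)\right| = \exp\left(\sum_{k=1}^N (\Re y_k) t_k\right),$$
since $t_k\in\R$. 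Second, I would integrate this pointwise inequality over $\PP(a)$ against the density $W$, using the triangle inequality for integrals: the modulus of the integral is at most the integral of the modulus. The prefactor $\dfrac{\prod_{k=1}^N\Gamma(k\theta)}{\Gamma(\theta)^{N(N+1)/2}}\prod_{1\le i<j\le N}|a_i-a_j|^{1-2\theta}$ is a positive constant (for fixed $a\in\PP_N$ with distinct coordinates and $\theta>0$), so it comes out unchanged. Putting these together yields exactly
$$|\B_a(y_1,\ldots,y_N;\theta)| \le \B_a(\Re y_1,\ldots,\Re y_N;\theta),$$
where on the right we have recognized the same integral representation with $y_k$ replaced by $\Re y_k$.

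I do not anticipate a serious obstacle here: the only things to check are that the density in the combinatorial formula is genuinely nonnegative on $\PP(a)$ (it is, being a product of absolute values raised to real powers, times a positive constant) and that the exponent $\sum_{i=1}^k a_i^{(k)} - \sum_{j=1}^{k-1}a_j^{(k-1)}$ takes real values on the (real) interlacing scheme (it does, as a real linear combination of the real coordinates). The mildest point worth a sentence is measurability/integrability, but this is already guaranteed by Proposition~\ref{prop:combinatorial} itself, which asserts the integral converges; replacing $y_k$ by $\Re y_k$ does not affect convergence since the real part of the exponent is unchanged. Thus the corollary is an immediate consequence of Proposition~\ref{prop:combinatorial} together with the triangle inequality.
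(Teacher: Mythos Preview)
Your proof is correct and essentially identical to the paper's own argument: both invoke Proposition~\ref{prop:combinatorial}, use $|\exp(z)|=\exp(\Re z)$ pointwise (since the $t_k$ are real), and then apply the triangle inequality for integrals (which the paper phrases as ``Jensen's inequality'').
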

\begin{proof}
Using $|\exp(z)| = \exp(\Re z)$, we have
$$\left| \exp\left( \sum_{k=1}^{N} {y_k \left( \sum_{i=1}^{k}a^{(k)}_i - \sum_{j=1}^{k-1}a^{(k-1)}_j \right)} \right) \right| =
\exp\left( \sum_{k=1}^{N} {\Re y_k \left( \sum_{i=1}^{k}a^{(k)}_i - \sum_{j=1}^{k-1}a^{(k-1)}_j \right)} \right),$$
whenever $a_i^{(k)} \in \R$, for any $1 \leq i \leq k \leq N$.
The result then follows from Proposition $\ref{prop:combinatorial}$ and Jensen's inequality.
\end{proof}

\begin{cor}\label{cor:bounded}
If $a = (a_1 > \ldots > a_N)\in\PP_N$ and $y_1, \ldots, y_N\in\R$, then
$$|\B_a(\ii y_1, \ldots, \ii y_N; \theta)| \leq 1.$$
\end{cor}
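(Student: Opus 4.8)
The plan is to deduce Corollary~\ref{cor:bounded} directly from Corollary~\ref{cor:bounded2} by specializing the complex arguments appropriately. Take $y_1, \ldots, y_N \in \R$ and apply Corollary~\ref{cor:bounded2} with the purely imaginary arguments $\ii y_1, \ldots, \ii y_N$ in place of $y_1, \ldots, y_N$. Since $\Re(\ii y_k) = 0$ for each $k$, the inequality becomes
\[
|\B_a(\ii y_1, \ldots, \ii y_N; \theta)| \leq \B_a(0, \ldots, 0; \theta).
\]
It then remains to observe that $\B_a(0, \ldots, 0; \theta) = 1$, which is precisely the normalization in Theorem~\ref{thm:opdam} (equivalently, it follows from Proposition~\ref{prop:combinatorial}: setting all $y_k = 0$ makes the exponential identically $1$, so the right-hand side is the integral of a probability density, hence equals $1$). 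Combining these two facts yields the claim.

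There is essentially no obstacle here: the statement is a one-line corollary of the preceding one, and the only ``work'' is the trivial verification that the real parts of the imaginary arguments vanish and that the Bessel function is normalized to $1$ at the origin. If one wanted a fully self-contained argument without invoking Corollary~\ref{cor:bounded2}, one could instead run the Jensen's inequality argument directly: by Proposition~\ref{prop:combinatorial}, $\B_a(\ii y_1, \ldots, \ii y_N; \theta)$ is the expectation of a random variable of the form $\exp(\ii t)$ with $t$ real (since all the $a_i^{(k)}$ are real and the $y_k$ are real), and $|\exp(\ii t)| = 1$, so by the triangle inequality for expectations (or Jensen applied to the convex function $|\cdot|$) the absolute value of the expectation is at most $1$. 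Either route gives the result immediately.
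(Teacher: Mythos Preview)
Your proof is correct and follows exactly the same approach as the paper: apply Corollary~\ref{cor:bounded2} to purely imaginary arguments and invoke the normalization $\B_a(0,\ldots,0;\theta)=1$. The alternative self-contained argument you sketch (via Proposition~\ref{prop:combinatorial} and the triangle inequality) is also valid and is essentially how Corollary~\ref{cor:bounded2} itself was proved.
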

\begin{proof}
This is a special case of Corollary $\ref{cor:bounded2}$; simply use the normalization $\B_a(0, \ldots, 0; \theta) = 1$.
\end{proof}

\begin{cor}\label{cor:additionlabel}
Let $a_1, \dots, a_N, y_1, \ldots, y_N, r$ be $2N+1$ arbitrary complex numbers.
Then
$$\B_{(a_1 + r, a_2 + r, \ldots, a_N + r)}(y_1, \ldots, y_N; \theta) = \exp(r(y_1 + \ldots + y_N))\B_{(a_1, a_2, \ldots, a_N)}(y_1, \ldots, y_N; \theta).$$
\end{cor}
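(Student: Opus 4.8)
The plan is to deduce the identity from the characterization of $\B_a(\cdot\,;\theta)$ in Theorem \ref{thm:opdam} as the \emph{unique} symmetric, normalized solution of the hypergeometric system \eqref{eqn:hypersystem}, by conjugating the Dunkl operators by the multiplication operator $E_r$ defined by $(E_r g)(x) := \exp\bigl(r(x_1 + \dots + x_N)\bigr)\,g(x)$.

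First I would record the basic conjugation identity
$$E_{-r}\,\xi_i\,E_r = \xi_i + r, \qquad i = 1, \dots, N.$$
This is immediate from the definition of $\xi_i$: conjugating $\partial/\partial x_i$ by $E_r$ produces an extra additive constant $r$ (since $\partial(x_1+\dots+x_N)/\partial x_i = 1$), while the off-diagonal part is unchanged because each factor $\tfrac{1}{x_j - x_i}$ is a multiplication operator and each $s_{ij}$ commutes with $E_r$ (the function $x_1 + \dots + x_N$ being symmetric). As conjugation by $E_r$ is an algebra automorphism and the $\xi_i$ commute (Theorem \ref{commutativity}), it follows that $E_{-r}\,P(\xi_1,\dots,\xi_N)\,E_r = P(\xi_1 + r,\dots,\xi_N + r)$ for every symmetric polynomial $P$.

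Next, set $F(x) := \B_a(x;\theta)$ and $\tilde F := E_r F$, i.e.\ $\tilde F(x) = \exp\bigl(r(x_1+\dots+x_N)\bigr)\,\B_a(x;\theta)$. For a symmetric polynomial $P$, put $\tilde P(t_1,\dots,t_N) := P(t_1 + r,\dots,t_N + r)$, again symmetric. Then, inserting $E_r E_{-r} = \id$ and using that $F$ solves \eqref{eqn:hypersystem} for the tuple $a$,
\[
P(\xi_1,\dots,\xi_N)\,\tilde F \;=\; E_r\bigl(E_{-r}\,P(\xi_1,\dots,\xi_N)\,E_r\bigr)F \;=\; E_r\,\tilde P(\xi_1,\dots,\xi_N)\,F \;=\; E_r\,\tilde P(a_1,\dots,a_N)\,F \;=\; P(a_1+r,\dots,a_N+r)\,\tilde F .
\]
Moreover $\tilde F$ is symmetric in $x_1,\dots,x_N$ and $\tilde F(0,\dots,0) = F(0,\dots,0) = 1$. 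By the uniqueness part of Theorem \ref{thm:opdam} applied to the tuple $(a_1 + r, \dots, a_N + r)$, we conclude $\tilde F = \B_{(a_1+r,\dots,a_N+r)}(\cdot\,;\theta)$, which is exactly the claimed identity once the argument is renamed from $x$ to $y$.

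I do not expect a genuine obstacle: the whole proof rests on the one-line conjugation formula $E_{-r}\xi_i E_r = \xi_i + r$ plus uniqueness. The only points needing a word of care are that $a_1,\dots,a_N,r$ are allowed to be arbitrary complex numbers, so one should invoke Theorem \ref{thm:opdam} in its stated holomorphic generality rather than just for $a \in \PP_N$, and that conjugation by $E_r$ respects the polynomial functional calculus in the commuting operators $\xi_i$. As an alternative (valid first for $a \in \PP_N$, $r \in \R$ and then extended by analytic continuation in $(a,r)$), one can read the identity directly off Proposition \ref{prop:combinatorial}: the interlacing scheme $\PP\bigl((a_1+r,\dots,a_N+r)\bigr)$ is the translate of $\PP(a)$ by $(r,\dots,r)$ on each level, the density \eqref{def:densityOBP} is translation-invariant since all its factors are powers of coordinate differences, and under this translation the exponent $\sum_{k=1}^N y_k\bigl(\sum_{i=1}^k a^{(k)}_i - \sum_{j=1}^{k-1} a^{(k-1)}_j\bigr)$ increases by the constant $r\sum_{k=1}^N y_k$, which then factors out of the expectation.
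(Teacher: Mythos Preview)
Your proof is correct. Your primary argument --- conjugating the Dunkl operators by the multiplication operator $E_r$ to obtain $E_{-r}\xi_i E_r = \xi_i + r$ and then invoking the uniqueness in Theorem~\ref{thm:opdam} --- is a genuinely different route from the paper's. The paper instead takes exactly the approach you sketch at the end as an ``alternative'': it reads the identity off the combinatorial formula of Proposition~\ref{prop:combinatorial} for $a\in\PP_N$ and $r\in\R$ (translation invariance of the density and the shift of the exponent by $r\sum_k y_k$), and then extends to all complex parameters by analytic continuation. Your conjugation argument is cleaner in that it works directly for all complex $a_1,\dots,a_N,r$ without any analytic continuation step, and it uses only the defining differential system rather than the integral representation; the paper's argument is shorter to state but relies on the combinatorial formula and the holomorphy of both sides. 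Both are valid, and you have in fact supplied both.
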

\begin{proof}
When $(a_1 > \dots > a_N)\in\PP_N$ and $r\in\R$, the result follows from Proposition $\ref{prop:combinatorial}$.
Both sides of our desired identity are holomorphic on the $(N+1)$ variables $a_1, \ldots, a_N, r$, so the result extends by analytic continuation.
\end{proof}

\begin{cor}\label{cor:multiplicationlabel}
Let $a_1, \dots, a_N, y_1, \ldots, y_N, c$ be $2N+1$ arbitrary complex numbers.
Then
$$\B_{(ca_1, \ldots, ca_N)}(y_1, \ldots, y_N; \theta) = \B_{(a_1, \ldots, a_N)}(cy_1, \ldots, cy_N; \theta).$$
\end{cor}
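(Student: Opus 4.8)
The statement $\B_{(ca_1,\dots,ca_N)}(y_1,\dots,y_N;\theta) = \B_{(a_1,\dots,a_N)}(cy_1,\dots,cy_N;\theta)$ is a scaling identity for the multivariate Bessel function, and the natural approach mirrors the proof of Corollary \ref{cor:additionlabel}: first establish the identity on a "large enough" real domain where we have an explicit probabilistic/integral representation, then extend to all complex $a_i$ and $c$ by analytic continuation, since both sides of the identity are holomorphic in the $N+1$ variables $a_1,\dots,a_N,c$ (by the holomorphicity clause of Theorem \ref{thm:opdam}).

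**Step 1 (reduce to a real domain).** I would first prove the identity assuming $(a_1 > \dots > a_N)\in\PP_N$ and $c > 0$. In that case $(ca_1 > \dots > ca_N)\in\PP_N$ as well, so Proposition \ref{prop:combinatorial} applies to the left-hand side: it equals an expectation over the orbital beta process $\mm^{ca,\theta}$ of $\exp\!\big(\sum_{k=1}^N y_k(\sum_{i=1}^k a^{(k)}_i - \sum_{j=1}^{k-1}a^{(k-1)}_j)\big)$. The key observation is that the map $(a^{(N-1)},\dots,a^{(1)})\mapsto (c^{-1}a^{(N-1)},\dots,c^{-1}a^{(1)})$ is a bijection $\PP(ca)\to\PP(a)$, and inspecting the density \eqref{def:densityOBP} one sees it pushes $\mm^{ca,\theta}$ forward to $\mm^{a,\theta}$ — indeed every factor $|a^{(k)}_i - a^{(k)}_j|$ and $|a^{(k+1)}_s - a^{(k)}_r|$ scales homogeneously, the Jacobian of the rescaling is a constant, and the normalization in Lemma \ref{lem:normalization} absorbs the rest (alternatively, Corollary \ref{cor:multiplicationlabel} is itself consistent with Lemma \ref{lem:normalization} under $a\mapsto ca$, $c>0$). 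Therefore the expectation over $\mm^{ca,\theta}$ of the exponential of $\sum_k y_k(\dots)$ in the $ca^{(k)}$-variables equals the expectation over $\mm^{a,\theta}$ of the exponential of $\sum_k (cy_k)(\dots)$ in the $a^{(k)}$-variables, which by Proposition \ref{prop:combinatorial} again is exactly $\B_{(a_1,\dots,a_N)}(cy_1,\dots,cy_N;\theta)$.

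**Step 2 (analytic continuation).** Fix $y_1,\dots,y_N\in\C$ and $c>0$ arbitrary. Both $(a_1,\dots,a_N)\mapsto \B_{(ca_1,\dots,ca_N)}(y;\theta)$ and $(a_1,\dots,a_N)\mapsto\B_{(a_1,\dots,a_N)}(cy;\theta)$ are holomorphic on $\C^N$ by Theorem \ref{thm:opdam}, and by Step 1 they agree on the nonempty open set $\PP_N\subset\R^N$, hence on all of $\C^N$ by the identity theorem for holomorphic functions of several variables. Next, fix $a_1,\dots,a_N,y_1,\dots,y_N\in\C$; both sides of the desired identity are holomorphic in the single variable $c$ (the left side because $c\mapsto(ca_1,\dots,ca_N)$ is holomorphic and $\B$ is holomorphic in its first $N$ arguments; the right side similarly in the $x$-arguments), and they agree for all $c>0$, hence for all $c\in\C$. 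This completes the proof.

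**Main obstacle.** There is no serious obstacle; the only point requiring a modicum of care is the pushforward claim in Step 1 — checking that the rescaling $a^{(k)}\mapsto c^{-1}a^{(k)}$ genuinely transports $\mm^{ca,\theta}$ to $\mm^{a,\theta}$ including the constant of proportionality. This is a one-line homogeneity count on \eqref{def:densityOBP} (the total degree of the density-times-$\prod da$ in all the $a^{(k)}_j$ and in $a$ jointly is homogeneous, so rescaling all arguments by $c$ multiplies the unnormalized measure by a power of $c$ that is independent of the configuration, and normalization kills it). If one prefers to avoid even this bookkeeping, one can instead invoke Proposition \ref{prop:combinatorial} directly in the form of the second (integral) display and change variables $a^{(k)}_j = c b^{(k)}_j$ throughout, observing that the explicit $\Gamma$-prefactor times $\prod_{i<j}|ca_i - ca_j|^{1-2\theta}$ recombines correctly — this is the same computation, just written without the measure-theoretic language.
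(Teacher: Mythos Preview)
Your proposal is correct and follows essentially the same approach as the paper: the paper's proof simply reads ``Same proof as for Corollary~\ref{cor:additionlabel},'' i.e., verify the identity for $(a_1>\dots>a_N)\in\PP_N$ and real $c$ via Proposition~\ref{prop:combinatorial}, then analytically continue in the $N+1$ variables $a_1,\dots,a_N,c$. Your write-up is in fact more detailed than the paper's, spelling out the rescaling of the orbital beta process and the two-stage analytic continuation.
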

\begin{proof}
Same proof as for Corollary $\ref{cor:additionlabel}$.
\end{proof}

\section{Observables for the orbital beta process and the Gaussian beta ensemble}\label{sec:observables}

Recall that $\mm^{a(N), \theta}$ is the beta orbital process with top level $a(N)\in\PP_N$.
Let $1 \leq m \leq N-1$ be an integer.
Denote by $\mm^{a(N), \theta}_m$ the pushforward of $\mm^{a(N), \theta}$ under the map
$$(a_i^{(k)})_{1 \leq i \leq k \leq N-1} \mapsto (a^{(m)}_j)_{1 \leq j \leq m},$$
i.e., under the map that sends an element of $\PP(a(N))$ to its $m$-th level.
Clearly the probability measure $\mm^{a(N), \theta}_m$ lives on the Weyl chamber $\W_m$.

\begin{prop}\label{prop:obsorbital}
For any $a(N)\in\PP_N$, complex numbers $y_1, \ldots, y_m\in\C$, and integers $N > m \geq 1$, we have
\begin{equation*}
\EE^{\mm^{a(N), \theta}_m} \left[ \B_{(a_1, \ldots, a_m)}(y_1, \ldots, y_m; \theta) \right]
= \B_{a(N)}(y_1, \ldots, y_m, 0^{N-m}; \theta),
\end{equation*}
where the expectation is taken with respect to the probability measure $\mm^{a(N), \theta}_m$ on the Weyl chamber $\mathcal{W}_m$, and $a = (a_1 > \ldots > a_m)\in \W_m$ is the random vector.
\end{prop}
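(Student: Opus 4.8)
The plan is to apply the combinatorial formula of Proposition \ref{prop:combinatorial} twice and to invoke the Markov-type property of the orbital beta process recorded right after Definition \ref{def:betaprocess}. First I would expand $\B_{a(N)}(y_1, \ldots, y_m, 0^{N-m}; \theta)$ using Proposition \ref{prop:combinatorial} with the $N$-tuple $a = a(N)$ and the arguments $(y_1, \ldots, y_m, 0^{N-m})$: the exponent $\sum_{k=1}^{N} y_k \big( \sum_{i=1}^k a_i^{(k)} - \sum_{j=1}^{k-1} a_j^{(k-1)} \big)$ collapses to a sum over $k = 1, \ldots, m$ once $y_{m+1} = \cdots = y_N = 0$. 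Since the $k$-th summand involves only the coordinates of $a^{(k)}$ and $a^{(k-1)}$, the surviving exponent depends solely on the bottom $m$ levels $a^{(1)}, \ldots, a^{(m)}$.

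Next I would condition on the $m$-th level $a^{(m)}$. By the tower property, $\B_{a(N)}(y_1, \ldots, y_m, 0^{N-m}; \theta)$ equals the expectation, over $a^{(m)}$ distributed according to $\mm^{a(N), \theta}_m$ on $\W_m$, of the conditional expectation of $\exp\big( \sum_{k=1}^m y_k (\,\cdots\,) \big)$ given $a^{(m)}$. The Markov property states that, conditionally on $a^{(m)}$, the joint law of $a^{(1)}, \ldots, a^{(m-1)}$ is exactly $\mm^{a^{(m)}, \theta}$, the orbital beta process with (now deterministic) top level $a^{(m)}$. Since $a^{(m)} \in \PP_m$ almost surely (strict interlacing), I may apply Proposition \ref{prop:combinatorial} a second time, this time to the $m$-tuple $a^{(m)}$ and the arguments $(y_1, \ldots, y_m)$: the conditional expectation is precisely $\B_{a^{(m)}}(y_1, \ldots, y_m; \theta)$. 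Combining the two steps yields $\B_{a(N)}(y_1, \ldots, y_m, 0^{N-m}; \theta) = \EE^{\mm^{a(N), \theta}_m}\big[ \B_{(a_1, \ldots, a_m)}(y_1, \ldots, y_m; \theta) \big]$, which is the claim, with $a = (a_1 > \cdots > a_m)$ the random $m$-th level.

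I do not expect a serious obstacle; the only points needing a line of care are (i) checking that setting $y_{m+1} = \cdots = y_N = 0$ truly leaves an exponent supported on levels $\le m$ — immediate from the structure of the summand — and (ii) the measure-theoretic bookkeeping of the conditioning, namely that $\mm^{a(N), \theta}_m$ is the marginal of $\mm^{a(N), \theta}$ on the $m$-th level and that the regular conditional distribution of the lower levels given $a^{(m)}$ is $\mm^{a^{(m)}, \theta}$; both are part of Definition \ref{def:betaprocess} and the remark following it. A small remark is that the identity extends from $y_i \in \C$ without difficulty, since Proposition \ref{prop:combinatorial} is stated for arbitrary complex arguments.
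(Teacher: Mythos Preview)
Your proposal is correct and follows essentially the same approach as the paper: both apply Proposition~\ref{prop:combinatorial} to $a(N)$, condition on the $m$-th level via the tower property, use the Markov structure of the orbital beta process, and apply Proposition~\ref{prop:combinatorial} again to identify the inner expectation as $\B_{a^{(m)}}(y_1,\dots,y_m;\theta)$. The only cosmetic difference is that the paper keeps $y_{m+1},\dots,y_N$ generic until the very end (obtaining a product of two conditional expectations, then killing the second factor by setting those variables to zero), whereas you specialize $y_{m+1}=\cdots=y_N=0$ at the outset so the upper levels never enter; both orderings are equally valid.
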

\begin{proof}
It is convenient to denote $a(N)$ by $a^{(N)}$ sometimes.
Also denote the expectation $\EE^{\mm^{a(N), \theta}}$ with respect to $\mm^{a(N), \theta}$ simply by $\EE$.
We have the following chain of equalities, for any complex numbers $y_1, \dots, y_N$:
\begin{align*}
\B_{a(N)}&(y_1, \ldots, y_N; \theta) = \EE \left[ \exp\left( \sum_{k=1}^{N}{y_k ( |a^{(k)}| - |a^{(k-1)}| )} \right) \right]\\
&= \EE^{\mm_m^{a(N), \theta}} \left[\EE \left[
\exp\left( \sum_{k=1}^{N}{y_k ( |a^{(k)}| - |a^{(k-1)}| )} \right) \Bigg\vert a^{(m)}  \right] \right]\\
&= \EE^{\mm_m^{a(N), \theta}} \left[ \EE \left[
\exp\left( \sum_{k=1}^{m}{y_k ( |a^{(k)}| - |a^{(k-1)}| )} \right) \Bigg\vert a^{(m)}  \right]
\EE \left[ \exp\left( \sum_{k=m+1}^{N}{y_k ( |a^{(k)}| - |a^{(k-1)}| )} \right) \Bigg\vert a^{(m)}  \right] \right]\\
&= \EE^{\mm_m^{a(N), \theta}} \left[ \B_{a^{(m)}}(y_1, \ldots, y_m; \theta)\cdot
\EE \left[ \exp\left( \sum_{k=m+1}^{N}{y_k ( |a^{(k)}| - |a^{(k-1)}| )} \right) \Bigg\vert a^{(m)}  \right] \right].
\end{align*}
The first equality is simply Proposition \ref{prop:combinatorial}.
The second equality is a general property of conditional expectation.
The third equality follows from the fact that, under $\mm^{a(N), \theta}$, the distribution of $a^{(1)}, \ldots, a^{(m-1)}$ and the distribution of $a^{(m+1)}, \ldots, a^{(N-1)}$ are independent, if we condition on any value of $a^{(m)}\in\PP_m$.
The fourth and final equality is a consequence of Proposition \ref{prop:combinatorial} and the fact that, under $\mm^{a(N), \theta}$, if $a^{(m)}$ is fixed, then the joint distribution of $a^{(1)}, \ldots, a^{(m-1)}$ is given by $\mm^{a^{(m)}, \theta}$.
Finally, we obtain the desired result by setting $y_{m+1} = \dots = y_N = 0$
\end{proof}

\begin{prop}\label{prop:obsbessel}
For any $y_1, \ldots, y_m\in\C$, we have
\begin{equation*}
\EE^{\mm_m^{(\theta)}} \left[ \B_{(a_1, \ldots, a_m)}(y_1, \ldots, y_m; \theta) \right]
= \exp\left( \frac{1}{2\theta}\sum_{i=1}^m{y_i^2} \right),
\end{equation*}
where the expectation is taken with respect to the Gaussian beta ensemble $\mm_m^{(\theta)}$ of Definition $\ref{def:betagaussian}$ on the Weyl chamber $\mathcal{W}_m$, and $a = (a_1 > \ldots > a_m)\in \W_m$ is the random vector.
\end{prop}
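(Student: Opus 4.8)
The plan is to recognize the left-hand side as the moment generating function of the diagonal increments of the Gaussian beta corners process, and to exhibit those increments as independent centered Gaussians of variance $1/\theta$; I will also indicate a shorter route via the Dunkl transform that sidesteps the only real computation.

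First I would reduce to the corners process. If $(x^{(k)}_i)_{1\le i\le k\le m}$ is distributed as $\mm^{(\theta)}_{m,\corners}$, then factoring the density \eqref{Gaussiancorners} as $\mm^{(\theta)}_m(x^{(m)})$ times \eqref{def:densityOBP} and matching constants by Lemma \ref{lem:normalization} (equivalently, using the $\theta$-Gibbs property of Lemma \ref{lem:theta} together with the already-noted fact that the top-level marginal of $\mm^{(\theta)}_{m,\corners}$ is $\mm^{(\theta)}_m$) shows that $x^{(m)}\sim\mm^{(\theta)}_m$ and that, conditionally on $x^{(m)}=a$, the lower array $(x^{(k)}_i)_{1\le i\le k\le m-1}$ is distributed as $\mm^{a,\theta}$. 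Applying Proposition \ref{prop:combinatorial} with top level $a=x^{(m)}$ and $N=m$, then taking the expectation over $x^{(m)}$ and using the tower property, gives
\begin{equation*}
\EE^{\mm^{(\theta)}_m}\!\left[\B_{a}(y_1,\dots,y_m;\theta)\right]=\EE^{\mm^{(\theta)}_{m,\corners}}\!\left[\exp\!\left(\sum_{k=1}^m y_k\,S_k\right)\right],\qquad S_k:=|x^{(k)}|-|x^{(k-1)}|,
\end{equation*}
with the convention $|x^{(0)}|:=0$.

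Next I would show that under $\mm^{(\theta)}_{m,\corners}$ the increments $S_1,\dots,S_m$ are independent, each a centered Gaussian of variance $1/\theta$; granting this, the right-hand side factors as $\prod_{k=1}^m e^{y_k^2/(2\theta)}=\exp\bigl(\tfrac{1}{2\theta}\sum_{k=1}^m y_k^2\bigr)$, which is the claim. This I would prove by induction on $m$, the case $m=1$ being immediate since $\mm^{(\theta)}_1$ has density $\propto e^{-\theta x^2/2}$. For the inductive step, the $\theta$-Gibbs property shows that the conditional law of the top level $x^{(m)}$ given the lower levels depends only on $x^{(m-1)}$; writing this conditional density $\kappa(x^{(m-1)},\cdot)$ out explicitly from \eqref{Gaussiancorners}, \eqref{thetaGibbs} and Lemma \ref{lem:normalization}, it suffices to verify that for every fixed $b\in\W_{m-1}$ and every $t\in\R$,
\begin{equation*}
\int_{\{a\in\W_m:\,a\succ b\}} e^{t(|a|-|b|)}\,\kappa(b,a)\,da=\exp\!\left(\frac{t^2}{2\theta}\right),
\end{equation*}
i.e. that $S_m\mid x^{(m-1)}=b$ is centered Gaussian of variance $1/\theta$ with no dependence on $b$; this also makes $S_m$ independent of the first $m-1$ levels, so combining with the induction hypothesis applied to the projection of $\mm^{(\theta)}_{m,\corners}$ onto levels $1,\dots,m-1$ (which is $\mm^{(\theta)}_{m-1,\corners}$) closes the induction.

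The displayed identity is the main obstacle: it is a Gaussian-weighted interlacing integral of Anderson/Selberg type, expressing the fact that adjoining an independent scalar Gaussian increment to the rank-$(m-1)$ Gaussian beta ensemble yields the rank-$m$ one. I would establish it by separating the center-of-mass coordinate $|a|$ and integrating out the transversal directions by iterated use of Anderson's integral \cite{A}, \cite[(2.24)]{FW} (for $m=2$ it collapses to $\int_0^\infty u^{\theta-1}e^{-\theta u}\,du=\Gamma(\theta)\theta^{-\theta}$), or by invoking the interpolated matrix beta integrals of \cite{N}, which already cover $\theta=\tfrac12,1,2$; the bookkeeping of $Z_{m,\theta},Z_{m-1,\theta}$ and $\prod_k\Gamma(k\theta)$ is forced since both sides are probability densities. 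Alternatively, a shorter argument avoids this integral entirely: by symmetry of the weight one may write $\EE^{\mm^{(\theta)}_m}[\B_a(y;\theta)]=\frac{1}{m!\,Z_{m,\theta}}\int_{\R^m}\B_a(y;\theta)\prod_{i<j}|a_i-a_j|^{2\theta}e^{-\frac{\theta}{2}|a|^2}\,da$, express $\B_a(y;\theta)$ as the $S_m$-average over $\sigma$ of the type-$A$ Dunkl kernel $E_\theta(a,\sigma y)$, drop the average since the Gaussian weight is symmetric, and apply the classical fact that the Gaussian is up to scaling a fixed point of the Dunkl transform; rescaling $a\mapsto a/\sqrt\theta$ and using the homogeneity of the Vandermonde weight then identifies the constant with $m!\,Z_{m,\theta}$ and produces exactly $\exp\bigl(\tfrac{1}{2\theta}\sum_i y_i^2\bigr)$.
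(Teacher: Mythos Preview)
The paper does not really prove this proposition: it simply identifies the identity as the type-$A$ Macdonald--Mehta integral and cites Opdam \cite{Op} and Kadell \cite{K}.

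Your main route---rewriting the left side via Proposition~\ref{prop:combinatorial} as the joint moment generating function of the increments $S_k=|x^{(k)}|-|x^{(k-1)}|$ under $\mm^{(\theta)}_{m,\corners}$, and then showing inductively that the $S_k$ are i.i.d.\ centered Gaussians of variance $1/\theta$---is correct and gives a pleasant probabilistic reading (for $\theta\in\{\tfrac12,1,2\}$ it is literally the independence of the diagonal entries of GOE/GUE/GSE). The one soft spot is your plan for the ``main obstacle'': Anderson's integral \cite{A} carries no Gaussian weight and does not apply here, and your claim that for $m=2$ the identity collapses to $\int_0^\infty u^{\theta-1}e^{-\theta u}\,du$ is not right. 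Already for $m=2$, after substituting $u=a_1-b_1$, $v=b_1-a_2$, the conditional identity becomes
\[
\int_0^\infty\!\!\int_0^\infty (u+v)\,u^{\theta-1}v^{\theta-1}\,e^{s(u-v)-\tfrac{\theta}{2}(u^2+v^2)}\,du\,dv \;=\; \textrm{const}\cdot e^{s^2/(2\theta)},
\]
which is true but is obtained by differentiating in $s$ and integrating by parts (an ODE argument), not from Anderson. For general $m$ the Gaussian-weighted interlacing identity you need is of Dixon--Anderson/Forrester--Rains type (cf.\ \cite{FR}) and again requires more than \cite{A}. Your alternative via the Dunkl transform---the Gaussian being, up to dilation, a fixed point of $D_\theta$---is correct and is essentially one of the standard proofs of the Macdonald--Mehta integral, so that shorter route effectively coincides with what the paper is citing.
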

\begin{proof}
This is the so-called Macdonald-Mehta integral, conjectured by Macdonald, \cite{M82}, and proved by Opdam, \cite{Op}, in greater generality (for more general root systems).
The statement is also a degeneration of Kadell's extension to Selberg's integral formula, see \cite{K} and Section $\ref{subsection:bessel}$.
\end{proof}

\section{Summary of asymptotic results for multivariate Bessel functions}\label{sec:formulas}

The explicit formulas for the study of asymptotics of multivariate Bessel functions are stated below.
They are degenerations of analogous formulas for \textit{normalized Jack polynomials} in \cite{Cu18b};
we defer their proofs to Section $\ref{sec:besselformulas}$.

First we need some terminology for the multivariate Bessel functions with all but a fixed number of variables set to zero: for integers $1 \leq m \leq N$ and any $a\in\C^N$, denote
$$\B_a(y_1, \ldots, y_m; N; \theta) := \B_a(y_1, \ldots, y_m, 0^{N-m}; \theta).$$

\begin{thm}\label{besselthm2}
Let $a = (a_1 > \ldots > a_N) \in \PP_N$ and $y\in\C$, $\Re y > 0$. Then the integral below converges absolutely and the identity holds
\begin{equation}\label{besselthm2eqn}
\B_a \left(y; N; \theta\right) = \frac{\Gamma(\theta N)}{y^{\theta N - 1}}\frac{1}{2\pi\ii}
\int_{(0^+)}^{-\infty}{\exp(yz) \prod_{i=1}^N{(z - a_i)^{-\theta}}dz},
\end{equation}
where the counter-clockwise oriented contour consists of the segment $[M + r\ii, \ M - r\ii]$ and horizontal lines $[M + r\ii, -\infty+r\ii)$, $[M - r\ii, -\infty - r\ii)$, for any fixed $M > a_1$, $r > 0$; see Figure $\ref{fig:Cminus}$.
\end{thm}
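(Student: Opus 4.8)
The plan is to start from an explicit single-variable contour integral representation of the one-variable Bessel function $\B_a(y; N; \theta) = \B_a(y, 0^{N-1}; \theta)$ and verify that the right-hand side of \eqref{besselthm2eqn} is symmetric in $a_1, \ldots, a_N$, analytic in $y$ on $\{\Re y > 0\}$, satisfies the defining hypergeometric system \eqref{eqn:hypersystem} restricted to a single active variable, and has the correct normalization at $y = 0^+$; then invoke the uniqueness in Theorem \ref{thm:opdam}. Concretely, since only one of the $N$ arguments is nonzero, it is cleanest to compute $\B_a(y; N; \theta)$ directly. First I would establish absolute convergence of the contour integral: on the two horizontal rays $\Re z \to -\infty$ the factor $\prod_i (z - a_i)^{-\theta}$ decays like $|z|^{-\theta N}$ while $|\exp(yz)| = \exp(\Re(y)\,\Re z) \to 0$ exponentially (using $\Re y > 0$), so the tails converge; the vertical segment is compact, so the whole integral is finite and, by a standard contour-deformation argument, independent of the choices of $M > a_1$ and $r > 0$.

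Next I would identify the integral. The natural route is to recognize the right-hand side as the value of a confluent-hypergeometric-type object: expanding $\prod_{i=1}^N (z-a_i)^{-\theta}$ and deforming to a Hankel-type contour, the integral $\frac{1}{2\pi\ii}\int_{(0^+)}^{-\infty} \exp(yz)\prod_i(z-a_i)^{-\theta}\,dz$ is, after the substitution $z = a_1 + w/y$ or a similar normalization, a standard representation of $\,_1F_1$-type series in the variable $y$; comparing with the known series expansion of $\B_a(y, 0^{N-1}; \theta)$ — which, because $N-1$ variables vanish, reduces to a one-variable hypergeometric series whose coefficients are ratios of Gamma factors in $\theta N$ — pins down the constant $\Gamma(\theta N)/y^{\theta N - 1}$. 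Alternatively, and perhaps more robustly, I would verify directly that $F(y) := \frac{\Gamma(\theta N)}{y^{\theta N-1}}\frac{1}{2\pi\ii}\int_{(0^+)}^{-\infty}\exp(yz)\prod_i(z-a_i)^{-\theta}dz$ satisfies the ODE obtained by restricting \eqref{eqn:hypersystem} to functions of $x_1$ alone with $x_2 = \cdots = x_N = 0$: differentiating under the integral sign multiplies the integrand by powers of $z$, and integration by parts against $\frac{d}{dz}\big(\exp(yz)\prod_i(z-a_i)^{-\theta}\big)$ — whose boundary terms vanish on the chosen contour by the decay established above — converts $z$-multiplication into the $a_i$-dependent lower-order terms of the Dunkl/Calogero-Moser operator, producing exactly the required eigenvalue equation.

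The normalization $F(0^+) = 1$ is then checked by a Hankel-contour computation: as $y \to 0^+$, rescaling $z \mapsto z/y$ turns $\frac{\Gamma(\theta N)}{y^{\theta N-1}}\frac{1}{2\pi\ii}\int \exp(yz)\prod_i(z-a_i)^{-\theta}dz$ into $\Gamma(\theta N)\frac{1}{2\pi\ii}\int_{(0^+)}^{-\infty}\exp(z)\,z^{-\theta N}(1 + O(y))\,dz$, and the leading term is $\Gamma(\theta N)/\Gamma(\theta N) = 1$ by Hankel's formula for the reciprocal Gamma function. Symmetry in the $a_i$ is manifest from the product $\prod_i(z-a_i)^{-\theta}$. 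Having verified all the hypotheses of Theorem \ref{thm:opdam}, uniqueness forces $F(y) = \B_a(y; N; \theta)$, which is the claim. The main obstacle I anticipate is the rigorous justification of differentiation under the integral sign and the vanishing of boundary terms when applying the restricted Dunkl operator — the operator involves the singular terms $\frac{\theta}{x_j - x_i}(1 - s_{ij})$, and one must be careful that, after specializing most variables to $0$, the resulting ODE in $x_1$ is the correct confluent hypergeometric equation of order matching the $\Gamma(\theta N)$ exponent; keeping track of the precise constants in this reduction, rather than the contour analysis itself, is where the real bookkeeping lies.
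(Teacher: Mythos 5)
Your proposal takes a genuinely different route from the paper's. The paper does not attempt to verify the defining Dunkl system: it proves the twin Theorem~\ref{besselthm1} (and declares the proof of Theorem~\ref{besselthm2} essentially identical) by degenerating the Jack-polynomial contour-integral formula of Theorem~\ref{jackthm1}, which is imported from \cite{Cu18b}. Setting $\lambda_i = \lfloor\epsilon^{-1}a_i\rfloor$, $y\mapsto\epsilon y$, $z\mapsto\epsilon^{-1}z$ and sending $\epsilon\to 0^+$, the paper uses the uniform Gamma-ratio asymptotics $\Gamma(x+a)/\Gamma(x+b)=x^{a-b}(1+O(1/x))$, the Okounkov--Olshanski degeneration of Jack polynomials to multivariate Bessel functions (Theorem~\ref{thm:okol}), and dominated convergence. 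The identity is thus inherited wholesale from an already-established polynomial-level statement, with no analysis of the differential system required. Your approach --- check that the contour integral solves the hypergeometric system and invoke uniqueness from Theorem~\ref{thm:opdam} --- would, if completed, be an independent and arguably more self-contained proof; the convergence estimate on the Hankel-type contour and the $y\to 0^+$ normalization via Hankel's $1/\Gamma$ formula in your sketch are both correct.

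However, the core of your proposal has a genuine gap that you flag but do not close. Theorem~\ref{thm:opdam} asserts uniqueness for a \emph{symmetric $N$-variable} solution of $P(\xi_1,\dots,\xi_N)F = P(a)F$ normalized by $F(0^N)=1$; it does not, on its own, characterize the one-variable restriction $y\mapsto\B_a(y,0^{N-1};\theta)$, which is the object you compute. To run the argument you would first need to derive the radial-part ODE satisfied along the slice $x_2=\dots=x_N=0$ and prove it determines that restriction uniquely under the normalization, and only then check by integration by parts that your integral solves the same ODE. That reduction is not mere bookkeeping: for $j\geq 2$ the operator $\xi_j$ contains a normal derivative off the slice, and the divided-difference terms $\frac{1}{x_k-x_j}(1-s_{jk})$ with $j,k\geq 2$ are of $0/0$ type at $x_j=x_k=0$, so recovering a well-defined closed ODE in $y$ requires a careful limiting procedure that is the real content of the missing step. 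Separately, the suggested identification with a classical $_1F_1$ after a substitution such as $z=a_1+w/y$ does not hold for $N>1$ with distinct $a_i$: the kernel $\prod_i(z-a_i)^{-\theta}$ is not a single power of $z$, and the Taylor coefficients of $\B_a(y,0^{N-1};\theta)$ in $y$ involve one-row Jack polynomials of $a$, not plain Gamma ratios, so the comparison of series cannot be carried out at that level of generality without essentially re-proving a special case of the Jack-level formula the paper already cites.
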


\begin{figure}
\begin{center}
\begin{tikzpicture}[decoration={markings,
mark=at position 1.5cm with {\arrow[line width=1pt]{>}},
mark=at position 3.2cm with {\arrow[line width=1pt]{>}},
mark=at position 5cm with {\arrow[line width=1pt]{>}},
mark=at position 6.5cm with {\arrow[line width=1pt]{>}},
mark=at position 8cm with {\arrow[line width=1pt]{>}},
mark=at position 9.8cm with {\arrow[line width=1pt]{>}},
mark=at position 11.7cm with {\arrow[line width=1pt]{>}}
}
]
\draw[help lines,->] (-4,0) -- (4,0) coordinate (xaxis);
\draw[help lines,->] (0,-1) -- (0,1) coordinate (yaxis);

\foreach \Point in {(-2, -0.03), (-1, -0.03), (1.2,-0.03)}{
    \node at \Point {\textbullet};
}
\node at (-2, 0.2) {$a_N$};
\node at (-1, 0.2) {$a_{N-1}$};
\node at (-0.2, 0.2) {$\cdots$};
\node at (0.5, 0.2) {$\cdots$};
\node at (1.2, 0.2) {$a_1$};

\path[draw,line width=1pt,postaction=decorate] (-4,-0.5) -- (2,-0.5) -- (2,0.5) -- (-4, 0.5);

\node[below] at (xaxis) {$\Re z$};
\node[left] at (yaxis) {$\Im z$};
\node[below left] {};
\end{tikzpicture}
\end{center}
\caption{Contour for Theorem $\ref{besselthm2}$}
\label{fig:Cminus}
\end{figure}

\begin{center}
\begin{figure}
\begin{center}
\begin{tikzpicture}[decoration={markings,
mark=at position 1.5cm with {\arrow[line width=1pt]{>}},
mark=at position 3.2cm with {\arrow[line width=1pt]{>}},
mark=at position 4.7cm with {\arrow[line width=1pt]{>}},
mark=at position 6.5cm with {\arrow[line width=1pt]{>}},
mark=at position 8.3cm with {\arrow[line width=1pt]{>}},
mark=at position 10cm with {\arrow[line width=1pt]{>}},
mark=at position 11.8cm with {\arrow[line width=1pt]{>}}
}
]

\draw[help lines,->] (-4,0) -- (4,0) coordinate (xaxis);
\draw[help lines,->] (0,-1) -- (0,1) coordinate (yaxis);

\path[draw,line width=1pt,postaction=decorate] (4,0.5) -- (-2,0.5) -- (-2,-0.5) -- (4,-0.5);

\foreach \Point in {(-1, -0.03), (0.2, -0.03), (3,-0.03)}{
    \node at \Point {\textbullet};
}
\node at (-1, 0.2) {$a_N$};
\node at (0.5, 0.2) {$a_{N-1}$};
\node at (2.3, 0.2) {$\cdots$};
\node at (1.5, 0.2) {$\cdots$};
\node at (3, 0.2) {$a_1$};

\node[below] at (xaxis) {$\Re z$};
\node[left] at (yaxis) {$\Im z$};
\node[below left] {};
\end{tikzpicture}
\end{center}
\caption{Contour for Theorem $\ref{besselthm1}$}
\label{fig:C}
\end{figure}
\end{center}

\begin{thm}\label{besselthm1}
Let $a = (a_1 > \ldots > a_N) \in \PP_N$ and $y\in\C$, $\Re y > 0$. Then the integral below converges absolutely and the identity holds
\begin{equation}\label{besselthm1eqn}
\B_a \left(-y; N; \theta\right) = -\frac{\Gamma(\theta N)}{y^{\theta N - 1}}\frac{1}{2\pi\ii}
\int_{(0^-)}^{+\infty}{\exp(-yz)\prod_{i=1}^N{(a_i - z)^{-\theta}}dz},
\end{equation}
where the counter-clockwise oriented contour consists of the segment $[M+r\ii, \ M-r\ii]$ and horizontal lines $[M+r\ii, +\infty+r\ii)$, $[M-r\ii, +\infty-r\ii)$, for any fixed $a_N > M$, $r > 0$; see Figure $\ref{fig:C}$.
\end{thm}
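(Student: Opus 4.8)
The plan is to derive Theorem~\ref{besselthm1} from Theorem~\ref{besselthm2} by a reflection $z\mapsto -z$, using only the scaling symmetry of the multivariate Bessel function recorded in Corollary~\ref{cor:multiplicationlabel}. First I would observe that $\B_a(\,\cdot\,;\theta)$ is symmetric in the label variables $a_1,\dots,a_N$, and that $a\in\PP_N$ forces $b:=(-a_N,-a_{N-1},\dots,-a_1)\in\PP_N$; then Corollary~\ref{cor:multiplicationlabel} with $c=-1$ gives
\[
\B_a(-y;N;\theta)=\B_{(a_1,\dots,a_N)}(-y,0^{N-1};\theta)=\B_{(-a_1,\dots,-a_N)}(y,0^{N-1};\theta)=\B_b(y;N;\theta).
\]
This reduces the claim to transporting the integral representation of Theorem~\ref{besselthm2} for the label $b$ back to the label $a$.

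Next I would apply Theorem~\ref{besselthm2} to $b$ (legitimate, since $b\in\PP_N$ and $\Re y>0$). Because $\{b_i\}_i=\{-a_i\}_i$ as multisets, one has $\prod_{i=1}^N(z-b_i)^{-\theta}=\prod_{i=1}^N(z+a_i)^{-\theta}$, and the representation is an integral over the counter-clockwise contour of Figure~\ref{fig:Cminus}, with vertical segment at $\Re z=M'$ for any $M'>b_1=-a_N$ and horizontal rays towards $\Re z=-\infty$. Substituting $z=-w$ maps this contour exactly onto the contour of Figure~\ref{fig:C} for the label $a$: the vertical segment moves to $\Re w=-M'=:M<a_N$, the two rays head towards $\Re w=+\infty$, and the orientation reverses. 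Since $z-b_i=z+a_i=-(w-a_i)=a_i-w$ evaluates with no extra branch factor — the branch cut $(-\infty,b_i]$ of $(z-b_i)^{-\theta}$ passes to the cut $[a_i,+\infty)$ of $(a_i-w)^{-\theta}$ — and $dz=-dw$ together with the reversal of orientation contributes one overall sign $-1$, collecting everything reproduces precisely formula~\eqref{besselthm1eqn}. Absolute convergence of the resulting integral is inherited from that in Theorem~\ref{besselthm2}.

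The argument is essentially bookkeeping, and the one point that needs care is exactly this branch-and-orientation accounting: one must check that the principal branches of the factors $(z-b_i)^{-\theta}$ used in Theorem~\ref{besselthm2} correspond, under $z\mapsto -w$, to the principal branches of $(a_i-w)^{-\theta}$ used in Theorem~\ref{besselthm1}, and that the single reversal of orientation produces exactly the minus sign in \eqref{besselthm1eqn} with no stray factors $e^{\pm\ii\pi\theta}$. Should a self-contained derivation be preferred instead, one can repeat for $\B_a(-y;N;\theta)$ the same contour deformation of the Jack-polynomial formula from \cite{Cu18b} used to prove Theorem~\ref{besselthm2} in Section~\ref{sec:besselformulas}; the reflection route above is shorter and reuses results already in hand.
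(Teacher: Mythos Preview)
Your argument is correct and the route is genuinely different from the paper's. The paper does \emph{not} deduce Theorem~\ref{besselthm1} from Theorem~\ref{besselthm2}; instead it proves Theorem~\ref{besselthm1} directly by degenerating the Jack--polynomial integral of Theorem~\ref{jackthm1}: set $\lambda_i=\lfloor\epsilon^{-1}a_i\rfloor$, $y\mapsto\epsilon y$, $z\mapsto\epsilon^{-1}z$, use $\Gamma(x+a)/\Gamma(x+b)\sim x^{a-b}$ and Theorem~\ref{thm:okol}, then apply dominated convergence. (It then declares the proof of Theorem~\ref{besselthm2} to be ``almost identical''.) Your reflection argument is shorter and cleaner once Theorem~\ref{besselthm2} is in hand; the paper's approach has the advantage of being self-contained and of making the two theorems logically independent rather than ordered.

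One small inaccuracy in your bookkeeping: the map $z\mapsto -z$ is a rotation by $\pi$, hence orientation-\emph{preserving}. If you trace the contour of Figure~\ref{fig:Cminus} (for the label $b$) through $w=-z$, you land exactly on the contour of Figure~\ref{fig:C} with the \emph{same} counter-clockwise traversal direction, not the reversed one. The single overall minus sign in~\eqref{besselthm1eqn} therefore comes from $dz=-dw$ alone, not from ``$dz=-dw$ together with the reversal of orientation''. Your branch analysis is fine: since $z-b_i=a_i-w$ as complex numbers and both sides use the principal logarithm on $\C\setminus(-\infty,0]$, the values agree pointwise and no stray factor $e^{\pm i\pi\theta}$ appears. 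With that correction the derivation goes through verbatim.
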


In both of the theorems above, the natural logarithm (needed to define $x^{-\theta}$ for complex values of $x$) is defined on $\C\setminus (-\infty, 0]$.
The branch of the logarithm does not allow us to close the contours and this is why we need infinite contours.

\begin{thm}\label{besselthm3}
Let $a \in \PP_N$ and let $m, N\in\N_+$ be such that $1\leq m\leq N-1$. Consider also any real numbers
\begin{equation*}
\begin{gathered}
y_1 > y_2 > \dots > y_m > y > 0,\\
\min_{i = 1, 2, \dots, m-1}{(y_i - y_{i+1})} > y.
\end{gathered}
\end{equation*}
Then
\begin{multline*}
\B_{a}({-y_1}, \ldots, {-y_m}; N; \theta)
\B_{a}({-y}; N; \theta) = \frac{\Gamma(N\theta)}{\Gamma((N-m)\theta)\Gamma(\theta)^m }
\frac{\prod_{1 \leq i < j \leq m }{(y_i - y_j)^{1 - 2\theta}}}{y^{m\theta} (y_1\cdots y_m)^{\theta}}\\
\times\int\dots\int { G_{\theta, y_1, \ldots, y_m, y}(z_1, \ldots, z_m) F_{y_1, \ldots, y_m, y}(z_1, \ldots, z_m)^{\theta(N-m)-1} }\\
{\B_{a}\left(- (y_1 + z_1), \ldots, -(y_m + z_m), -y+(z_1 + \dots + z_m); N; \theta\right)
\prod_{i=1}^m{(z_i^{\theta-1}dz_i)} },
\end{multline*}
where the domain of integration $\V_y$ in the integral is the compact subset of $\R^m$ defined by the inequalities
\begin{equation*}
\left\{
\begin{gathered}
z_1, \ldots, z_m \geq 0,\\
y \geq z_1 + \dots + z_m.
\end{gathered}
\right.
\end{equation*}

Moreover, the functions $G_{\theta, y_1, \ldots, y_m, y}(z_1, \ldots, z_m)$ and $F_{y_1, \ldots, y_m, y}(z_1, \ldots, z_m)$ are defined as follows (we denote $y_{m+1} := y - (z_1 + \dots + z_m)$ and $z_{m+1} := 0$ to simplify notation):
\begin{equation}\label{defn:FGfns}
\begin{aligned}
G_{\theta, y_1, \ldots, y_m, y}(z_1, \ldots, z_m) &:=
\prod_{1 \leq i < j \leq m}{(y_i - y_j + z_i)^{\theta - 1}}\prod_{1 \leq i < j \leq m+1}{(y_i - y_j + z_i - z_j)(y_i - y_j - z_j)^{\theta - 1}},\\
F_{y_1, \ldots, y_m, y}(z_1, \ldots, z_m) &:= (1 - (z_1 + \dots + z_m)/y)\prod_{i=1}^m{(1 + z_i/y_i)}.
\end{aligned}
\end{equation}
\end{thm}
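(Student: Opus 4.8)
The plan is to obtain Theorem~\ref{besselthm3} as a degeneration of the corresponding identity for Jack polynomials proved in \cite{Cu18b}, in the same way that Theorems~\ref{besselthm1} and~\ref{besselthm2} are degenerations of the one--variable contour--integral formulas of that paper. The relevant identity of \cite{Cu18b} is a ``fusion'' formula for normalized Jack polynomials (in Macdonald's $P$--normalization): it expresses the product
$$\frac{P_\lambda(x_1,\dots,x_m,1^{N-m};\theta)}{P_\lambda(1^N;\theta)}\cdot\frac{P_\lambda(x,1^{N-1};\theta)}{P_\lambda(1^N;\theta)}$$
as an integral, over a bounded domain interlacing the $x_i$'s and $x$, of the single normalized Jack polynomial $P_\lambda(x_1',\dots,x_{m+1}',1^{N-m-1};\theta)/P_\lambda(1^N;\theta)$ against a weight built from Jack--type Vandermonde and hook factors in the $x$'s and the integration variables. (For $\theta=1$ this is, via $\B_a(\,\cdot\,;1)\sim\int_{U(N)}e^{\mathrm{Tr}(UAU^*X)}\,dU$, the statement that the eigenvalues of the Hermitian matrix $U^*XU+V^*YV$ --- with $X=\mathrm{diag}(-y_1,\dots,-y_m,0^{N-m})$, $Y=\mathrm{diag}(-y,0^{N-1})$ and $U,V$ independent Haar --- are $0$ with multiplicity $N-m-1$ together with $m+1$ nonzero values whose joint law is explicit; the general--$\theta$ identity is the Jack--theoretic avatar of this.)

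Concretely, fix $a=(a_1>\dots>a_N)\in\PP_N$; translating $a$ by a constant via Corollary~\ref{cor:additionlabel} (both sides transform by the same exponential factor, since the $z_i$--contributions cancel on the right), we may assume $a_N\ge0$, and then pick partitions $\lambda=\lambda(\epsilon):=(\lfloor a_1/\epsilon\rfloor,\dots,\lfloor a_N/\epsilon\rfloor)$, so that $\epsilon\lambda(\epsilon)\to a$ entrywise as $\epsilon\downarrow0$. Applying the standard Bessel limit $P_{\lambda(\epsilon)}(e^{\epsilon u_1},\dots,e^{\epsilon u_N};\theta)/P_{\lambda(\epsilon)}(1^N;\theta)\to\B_a(u_1,\dots,u_N;\theta)$ (see \cite{GK}) with $x_i=e^{-\epsilon y_i}$, $x=e^{-\epsilon y}$, and rescaling the integration variables by $\epsilon$, each normalized Jack polynomial in the identity of \cite{Cu18b} turns into the matching multivariate Bessel function. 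It then remains to check, term by term, that: the bounded interlacing domain rescales to $\V_y=\{z_1,\dots,z_m\ge0,\ z_1+\dots+z_m\le y\}$ --- this is exactly where the hypotheses $y_1>\dots>y_m>y>0$ and $\min_i(y_i-y_{i+1})>y$ enter, guaranteeing that all bases of the powers occurring in $G_{\theta,y_1,\dots,y_m,y}$ stay positive (and bounded away from $0$) on $\V_y$; the Jack Vandermonde/hook weight converges to $G_{\theta,y_1,\dots,y_m,y}(z)\,F_{y_1,\dots,y_m,y}(z)^{\theta(N-m)-1}\prod_{i=1}^m z_i^{\theta-1}$; and the combinatorial prefactor, a ratio of products over the parts of $\lambda(\epsilon)$, converges by Stirling's formula to $\frac{\Gamma(N\theta)}{\Gamma((N-m)\theta)\Gamma(\theta)^m}\cdot\frac{\prod_{1\le i<j\le m}(y_i-y_j)^{1-2\theta}}{y^{m\theta}(y_1\cdots y_m)^{\theta}}$.

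The main obstacle is justifying the interchange of the limit $\epsilon\downarrow0$ with the integral (equivalently, with the Riemann sum, if the identity of \cite{Cu18b} is phrased as a sum over interlacing partitions). I would do this by dominated convergence on the compact set $\V_y$. The crucial input is a uniform bound on the Bessel factor $\B_a\big(-(y_1+z_1),\dots,-(y_m+z_m),-y+(z_1+\dots+z_m);N;\theta\big)$: all of its arguments are real and stay in a fixed compact set as $(z_1,\dots,z_m)$ ranges over $\V_y$, so by Proposition~\ref{prop:combinatorial} (or Corollary~\ref{cor:bounded2}) it is uniformly bounded there, and the same estimate applied to the Jack approximants gives a bound uniform in $\epsilon$; the weight factors $G$ and $F^{\theta(N-m)-1}\prod_i z_i^{\theta-1}$ are integrable over $\V_y$ because $\theta>0$ and $\theta(N-m)>0$, and they dominate the rescaled Jack weights for small $\epsilon$. (The same reasoning shows the right--hand side of the asserted identity converges absolutely, which is why no hypothesis beyond the stated inequalities on the $y_i$ is needed.) I expect the genuinely laborious steps to be the explicit matching of the Jack hook--and--Vandermonde products with $G$ and $F$, and the Stirling asymptotics of the prefactor; these are bookkeeping rather than conceptual difficulties.
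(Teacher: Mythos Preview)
Your approach is essentially the same as the paper's: the paper also derives Theorem~\ref{besselthm3} by substituting $\lambda_i=\lfloor\epsilon^{-1}a_i\rfloor$, $y_j\mapsto\epsilon y_j$, $y\mapsto\epsilon y$, $z_k\mapsto\epsilon z_k$ into the Jack identity of \cite[Theorem~3.1]{Cu18b} (quoted in the paper as Theorem~\ref{jackthm2}) and taking $\epsilon\to0^+$, using Theorem~\ref{thm:okol} for the Jack-to-Bessel limit and the elementary estimate $1-e^{-\epsilon A}=\epsilon A(1+O(\epsilon))$ to handle the weight and prefactor.

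Two small corrections to your outline. First, the Jack identity of \cite{Cu18b} is already stated for $\lambda\in\Z^N$ (weakly decreasing integers, not just partitions), so the translation of $a$ to ensure $a_N\ge0$ is unnecessary. Second, the prefactor in that identity is not ``a ratio of products over the parts of $\lambda(\epsilon)$'' requiring Stirling: the factor $\Gamma(N\theta)/\big(\Gamma((N-m)\theta)\Gamma(\theta)^m\big)$ is already present exactly in the Jack formula, and the remaining $y$-dependent factors $(1-e^{-\epsilon(y_i-y_j)})^{1-2\theta}$, $(1-e^{-\epsilon y})^{-m\theta}$, $(1-e^{-\epsilon y_i})^{-\theta}$ become their rational limits via Taylor expansion, with the resulting powers of $\epsilon$ cancelling those from $G^{\trig}\to G$ and $(1-e^{-\epsilon z_i})^{\theta-1}\to(\epsilon z_i)^{\theta-1}$. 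Your attention to dominated convergence is in fact more careful than the paper's own proof, which computes the pointwise limits line by line and then simply asserts the result.
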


The three theorems above will be used to prove the following limit for multivariate Bessel functions when all but a fixed number of arguments are set to zero.
The proof is in Section $\ref{sec:asymptotics}$.

\begin{thm}\label{thm:asymptoticbessels}
Let $m\in\N_+$ and $\{ a(N)\in\PP_N \}_{N \geq 1}$ be a regular sequence (in the sense of Definition \ref{def:regularsignatures}) of ordered tuples with limiting measure $\mu$. Then
$$
\lim_{N\rightarrow\infty}{\B_{a(N)}\left( \frac{y_1}{\sqrt{N}}, \dots, \frac{y_m}{\sqrt{N}}; N; \theta \right)}
\exp\left( -\sqrt{N} \cdot \E[\mu_N] \sum_{i=1}^m{y_i} \right)
= \exp\left( \frac{\Var[\mu]}{2\theta}\sum_{i=1}^m{y_i^2} \right),
$$
uniformly for $y_1, \ldots, y_m$ belonging to compact subsets of $\C$.

As usual, $\mu_N := \frac{1}{N}\sum_{i=1}^N{\delta_{a(N)_i/N}}$ and the notations $\E[\nu], \Var[\nu]$ were defined in \eqref{eqn:meanvariance}.
\end{thm}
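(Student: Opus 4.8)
The plan is to prove the asymptotic statement of Theorem \ref{thm:asymptoticbessels} by induction on $m$, using the contour-integral formulas of Theorems \ref{besselthm2}, \ref{besselthm1}, and \ref{besselthm3}, combined with a saddle-point (steepest descent) analysis. Since all statements are insensitive to translating and scaling the tuple $a(N)$ (Corollaries \ref{cor:additionlabel} and \ref{cor:multiplicationlabel}), I would first reduce to a convenient normalization: after subtracting $N\E[\mu_N]$ from every coordinate, we may assume $\E[\mu_N] \to 0$, so that the exponential prefactor $\exp(-\sqrt{N}\,\E[\mu_N]\sum y_i)$ is absorbed and the target limit becomes $\exp\!\big(\frac{\Var[\mu]}{2\theta}\sum y_i^2\big)$ with $\Var[\mu] = \int t^2\mu(dt)$. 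Condition (a) of regularity keeps $a(N)_1/N, a(N)_N/N$ bounded, which will be needed to control the contours uniformly in $N$.

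The base case $m = 1$ is the heart of the matter. Using Theorem \ref{besselthm1} (or \ref{besselthm2}, depending on the sign of $\Re y$), we have, for $y$ with $\Re y > 0$,
\begin{equation*}
\B_{a(N)}\!\left(-\tfrac{y}{\sqrt N}; N; \theta\right) = -\frac{\Gamma(\theta N)}{(y/\sqrt N)^{\theta N - 1}}\,\frac{1}{2\pi\ii}\int_{(0^-)}^{+\infty}\exp\!\left(-\tfrac{y}{\sqrt N}\,z\right)\prod_{i=1}^N (a(N)_i - z)^{-\theta}\,dz.
\end{equation*}
Substituting $z = \sqrt N\, w$ turns $\prod_i (a(N)_i - \sqrt N w)^{-\theta}$ into $\exp\!\big(-\theta N \int \log(\sqrt N)\,\mu_N(dt)\big)\cdot\exp\!\big(-\theta N \int \log(t/\sqrt N - w)\,\mu_N(dt)\big)$, i.e. the integrand becomes $\exp\!\big(-N\,[\theta\log(\text{shift}) + \theta\int\log(\cdot - w)\mu_N(dt)] - \sqrt N\, y w\big)$ up to polynomial factors. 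Expanding the large-$N$ action $S_N(w) := \theta\int \log(w - t/\sqrt N)\,\mu_N(dt) + \sqrt{N}\, yw/N$ to second order in $1/\sqrt N$ around the point $w = w_\ast$ where the leading term is stationary, and using $\int t\,\mu_N(dt)\to 0$, $\int t^2 \mu_N(dt)\to \Var[\mu]$, the Gaussian integral over $w$ produces the factor $\exp\!\big(\frac{\Var[\mu]}{2\theta} y^2\big)$; the prefactor $\Gamma(\theta N)/(y/\sqrt N)^{\theta N -1}$ together with Stirling and the saddle-point normalization $\sqrt{2\pi/(N S_N''(w_\ast))}$ must cancel to leave exactly $1$. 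I would carry this out carefully, tracking the branch of the logarithm (defined on $\C\setminus(-\infty,0]$, as the theorems specify) and checking that the chosen contour can be deformed to pass through the saddle in the direction of steepest descent, uniformly for $N$ large and $y$ in a compact set with $\Re y$ bounded away from $0$; the case $\Re y \le 0$ or $\Re y = 0$ follows by analytic continuation in $y$ since both sides are entire in $y$ and the left side is bounded (Corollary \ref{cor:bounded2}), so a Montel/normal-families argument upgrades pointwise convergence on $\{\Re y > 0\}$ to locally uniform convergence on all of $\C$.

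For the inductive step, Theorem \ref{besselthm3} expresses the product $\B_a(-y_1,\dots,-y_m;N;\theta)\,\B_a(-y;N;\theta)$ — after rescaling all arguments by $1/\sqrt N$ — as an integral over the simplex $\V_y$ of a product of: the bounded geometric factors $G_\theta$ and $F^{\theta(N-m)-1}$, times another multivariate Bessel function with $m$ arguments of the form $-(y_i + z_i)/\sqrt N$ plus one argument $-(y - \sum z_i)/\sqrt N$ — to which the inductive hypothesis (for $m$ arguments, or rather $m$ nonzero arguments) applies — and that the integration variables $z_i$ live on a \emph{shrinking} simplex of size $O(1/\sqrt N)$ once we rescale $z_i \mapsto z_i/\sqrt N$: then $F^{\theta(N-m)-1} = \big(1 - \tfrac{1}{\sqrt N}\sum z_i/y + \cdots\big)(1 + \tfrac{1}{\sqrt N}z_i/y_i + \cdots)^{\theta(N-m)-1}$, whose logarithm is $\sqrt N\big(\sum z_i/y_i - \sum z_i/y\big) + O(1)$, while the prefactor $\Gamma(N\theta)/(\Gamma((N-m)\theta)\Gamma(\theta)^m)$ is polynomial in $N$ of the right degree to balance the $z_i^{\theta-1}dz_i$ measure. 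Applying the inductive hypothesis to the inner Bessel function (whose cumulant-type exponent produces the sum of $\exp(\frac{\Var[\mu]}{2\theta}(\cdots)^2)$ terms) and doing the resulting $m$-dimensional Laplace-type integral over $(z_1,\dots,z_m)$ — again a Gaussian integral after rescaling, with the cross-terms from $\big(\sum z_i - y\big)^2$ conspiring with the $G_\theta$ factor — yields, after collecting terms, $\exp\!\big(\frac{\Var[\mu]}{2\theta}\big(\sum_{i=1}^m y_i^2 + y^2\big)\big)$; dividing by the known $m=1$ asymptotics of $\B_a(-y;N;\theta)$ gives the claim for $m$ arguments. The main obstacle I expect is the uniform control of all these asymptotics: justifying that the saddle-point/Laplace approximations are valid \emph{uniformly} in $N$ (the contours and saddles move with $N$ because $\mu_N$ changes, and $\mu_N$ is only assumed to converge weakly, so one must exploit condition (a) to keep everything in a fixed compact region and argue via dominated convergence on the rescaled contours), and matching the precise polynomial and $\Gamma$-function prefactors against Stirling so that nothing but the desired Gaussian exponential survives — this bookkeeping is delicate and is where most of the technical work in Section \ref{sec:asymptotics} will go.
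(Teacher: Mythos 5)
Your blueprint tracks the paper's proof closely: reduce to $\E[\mu_N]=0$ via Corollary \ref{cor:additionlabel}; prove the $m=1$ case by steepest descent on the contour integrals of Theorems \ref{besselthm2}/\ref{besselthm1}, then upgrade from real $y$ away from the origin to all of $\C$ by a Montel/normal-families argument (the paper further isolates uniform boundedness near the origin as its own Step 8, using Proposition \ref{prop:combinatorial} to reduce to $m=1$ at $\pm R$); and proceed by induction in $m$ using Theorem \ref{besselthm3}. The reduction and base-case sketches are essentially sound, modulo bookkeeping. But there is a genuine logical gap in the inductive step.

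The multivariate Bessel function inside the integral of Theorem \ref{besselthm3}, namely
\[
\B_{a(N)}\!\left(-\tfrac{y_1+z_1}{\sqrt N},\ldots,-\tfrac{y_m+z_m}{\sqrt N},\ -\tfrac{y-(z_1+\cdots+z_m)}{\sqrt N};\,N;\theta\right),
\]
has $m+1$ nonzero arguments. You propose to ``apply the inductive hypothesis to the inner Bessel function'' and then perform the resulting Laplace integral, but the inductive hypothesis covers only $m$ nonzero arguments, so using it here is circular: the $(m+1)$-argument asymptotics are exactly what we are trying to prove. The paper's Proposition \ref{prop:masymptotics} resolves this differently. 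It first establishes an a priori uniform bound (Lemma \ref{lem:boundedness}, upgraded to complex arguments via Corollary \ref{cor:bounded2} in Corollary \ref{cor:boundedness}), which by Cauchy's integral formula gives uniform bounds on the partial derivatives of $f_N(\cdot) := \B_{a(N)}(-\cdot/\sqrt N; N;\theta)$. This lets one replace the inner Bessel, on the effective domain $[0,N^{-\epsilon}]^m$ with $1/2<\epsilon<1$, by its value at $\bfz = 0$ (i.e., $\B_{a(N)}(-y_1/\sqrt N,\ldots,-y/\sqrt N;N;\theta)$) up to an additive $O(N^{-\epsilon})$ error, \emph{without} knowing its asymptotics. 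Pulling this factor out, the remaining Laplace-type integral factors into one-dimensional gamma integrals whose value cancels exactly the explicit prefactor $\Gamma(N\theta)/(\Gamma((N-m)\theta)\Gamma(\theta)^m)$ and the contributions of $G_\theta$, $F$ (Claims 1 and 2 in the paper). The conclusion is the multiplicativity relation
\[
\B_{a(N)}(-y_1/\sqrt N,\ldots,-y_m/\sqrt N;N;\theta)\,\B_{a(N)}(-y/\sqrt N;N;\theta)
= \B_{a(N)}(-y_1/\sqrt N,\ldots,-y_m/\sqrt N,-y/\sqrt N;N;\theta) + o(1),
\]
and \emph{only now} is the inductive hypothesis applied — to the two factors on the left, not to the integrand. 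Your proposed ``Gaussian integral over $z$ with cross-terms from the cumulant expansion'' would never materialize, because the paper never expands the inner Bessel. You should replace that step by the approximate-constancy argument above, and promote the a priori boundedness (which you acknowledge only in passing) to a standalone lemma, since it is needed both for the derivative control and for the final Montel step in $\C^{m+1}$.
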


\section{Proof of the main theorem}

We prove Theorem \ref{thm:GBE}, assuming the validity of Theorem \ref{thm:asymptoticbessels}.
In fact, we shall see that the uniform convergence of Theorem \ref{thm:asymptoticbessels} is stronger than we need: pointwise convergence would be enough.

\subsection{The Dunkl transform and the Inversion theorem}

In this subsection, let $m \geq 1$ be fixed; we consider spaces of functions on $m$ variables.

We work with the ``symmetrized'' version of the Dunkl transform in \cite{dJ}.
All spaces of functions on $\R^m$ have a \textit{symmetric} analogue.
For instance, we denote by $\SSS^{\sym}$ the space of Schwarz functions on $\R^m$ which are symmetric with respect to its $m$ arguments.
Similarly, denote by $(C_0^{\infty})^{\sym}$ the space of smooth, compactly supported and symmetric functions on $\R^m$.

Let $w_{\theta} : \R^m \rightarrow [0, \infty)$ be the weight function
$$w_{\theta}(x_1, \ldots, x_m) := \prod_{1 \leq i < j \leq m}|x_i - x_j|^{2\theta}.$$

We use both $x = (x_1, \ldots, x_m)$ and $\lambda = (\lambda_1, \ldots, \lambda_m)$ to denote vectors in $\R^m$.

For a symmetric function $f$ on $\R^m$ such that $f\in L^1(\R^m, |w_{\theta}(x)|dx)$, define the action of the operators $D_{\theta}, E_{\theta}$ on $f$ by
\begin{align*}
(D_{\theta}f)(\lambda) &:= \int_{\R^m}{f(x)\B_{-\ii\lambda}(x; \theta) w_{\theta}(x)dx}, \ \lambda\in\R^m,\\
(E_{\theta}f)(x) &:= \int_{\R^m}{f(\lambda)\B_{\ii\lambda}(x; \theta) w_{\theta}(\lambda)d\lambda}, \ x\in\R^m,
\end{align*}
where $\pm\ii\lambda := (\pm \ii\lambda_1, \ldots, \pm \ii\lambda_m)$.
Because of Corollary $\ref{cor:bounded}$, the integrals above are absolutely convergent, so $D_{\theta}f, E_{\theta}f$ are well-defined.
Observe also that the functions $D_{\theta}f, E_{\theta}f$ on $\R^m$ are symmetric.

Each of the operators $D_{\theta}, E_{\theta}$ is called the \textit{Dunkl transform}.
When $\theta = 0$, they are simply (constant multiples of) the Fourier transform and its inverse.
It is proved in \cite{dJ} that if $f\in\SSS^{\sym}$, then $D_{\theta}f, E_{\theta}f\in\SSS^{\sym}$.
Even more is true, as the next theorem shows.

\begin{thm}[Inversion Theorem \cite{dJ}]\label{thm:dJ}
Both $D_{\theta}, E_{\theta}$ are homeomorphisms of $\SSS^{\sym}$ and $E_{\theta}\circ D_{\theta} = D_{\theta}\circ E_{\theta} = c_{\theta}^2 \mathbf{1}_{\SSS^{\sym}}$, where the constant is
$$c_{\theta} = (2\pi)^{\frac{m}{2}} m! \prod_{j=1}^m{\frac{\Gamma(j\theta)}{\Gamma(\theta)}}.$$
\end{thm}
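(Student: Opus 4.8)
The plan is to prove two statements and then combine them by soft functional analysis: (I) $D_\theta$ and $E_\theta$ are continuous linear maps $\SSS^{\sym}\to\SSS^{\sym}$; and (II) $E_\theta\circ D_\theta=D_\theta\circ E_\theta=c_\theta^2\,\id_{\SSS^{\sym}}$. Note that $E_\theta$ is precisely $D_\theta$ with the kernel $\B_{-\ii\lambda}(x;\theta)$ replaced by its complex conjugate $\B_{\ii\lambda}(x;\theta)$ for real arguments, so it is the natural candidate for the inverse transform. Granting (I) and (II), the map $c_\theta^{-2}E_\theta$ is a two-sided inverse of $D_\theta$ which is itself continuous by (I), hence $D_\theta$ is a homeomorphism of $\SSS^{\sym}$, and symmetrically for $E_\theta$. (As an alternative to proving (I)--(II) directly, one could deduce the theorem by restricting the inversion formula for the full Dunkl transform on $\R^m$ to the subspace of $S_m$-invariant functions, since $\B_\lambda(x;\theta)$ is the $S_m$-average of the non-symmetric Dunkl kernel.)

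For (I), the structural tools are: the defining eigenrelation $P(\xi_1,\dots,\xi_m)\,\B_{-\ii\lambda}(x;\theta)=P(-\ii\lambda)\,\B_{-\ii\lambda}(x;\theta)$ for every symmetric polynomial $P$ (the $\xi_i$ acting in the $x$-variables; Theorem~\ref{thm:opdam}) and its counterpart in the label variables, valid by the $a\leftrightarrow x$ symmetry of $\B$; the fact that on symmetric functions the operators $P(\xi_1,\dots,\xi_m)$ are genuine differential operators with polynomial coefficients (Section~\ref{sec:bessels}); the skew-adjointness $\int_{\R^m}(\xi_i f)\,g\,w_\theta\,dx=-\int_{\R^m}f\,(\xi_i g)\,w_\theta\,dx$ of the Dunkl operators with respect to $w_\theta\,dx$; and the uniform bound $|\B_{-\ii\lambda}(x;\theta)|\le 1$ for $\lambda,x\in\R^m$, a consequence of Corollary~\ref{cor:bounded} together with the $a\leftrightarrow x$ symmetry. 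Using these, I would transfer the operation ``multiply $D_\theta f$ by a symmetric polynomial'' and the operation ``apply a symmetric constant-coefficient differential operator to $D_\theta f$'' to the corresponding operations on $f$ inside the integral; since $w_\theta$ grows only polynomially one has $\SSS^{\sym}\subset L^1(\R^m,w_\theta\,dx)$, and the uniform kernel bound then shows that each Schwartz seminorm of $D_\theta f$ is bounded by a Schwartz seminorm of $f$. Thus $D_\theta$, and identically $E_\theta$, is a continuous endomorphism of $\SSS^{\sym}$.

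For (II) I would use the generalized Hermite functions. The starting point is the Dunkl transform of the Gaussian: the shape $D_\theta\bigl(e^{-|x|^2/2}\bigr)(\lambda)=c_\theta\,e^{-|\lambda|^2/2}$ follows because the Gaussian is, up to scaling, a fixed point of the Dunkl-operator calculus, and the constant is the value at $\lambda=0$, namely $\int_{\R^m}e^{-|x|^2/2}w_\theta(x)\,dx=(2\pi)^{m/2}m!\prod_{j=1}^m\Gamma(j\theta)/\Gamma(\theta)=c_\theta$ by the Mehta integral (a special case of Selberg's integral, consistent with Proposition~\ref{prop:obsbessel} after rescaling). Applying the Dunkl creation operators to the Gaussian produces an orthogonal basis $\{\psi_\nu\}$ of $L^2(\R^m,w_\theta\,dx)^{\sym}$, consisting of functions (symmetric polynomial)$\times$(Gaussian), with $D_\theta\psi_\nu=c_\theta(-\ii)^{\deg\psi_\nu}\psi_\nu$; since the $\psi_\nu$ are real, $E_\theta\psi_\nu=\overline{D_\theta\psi_\nu}=c_\theta\,\ii^{\deg\psi_\nu}\psi_\nu$, and hence $E_\theta D_\theta\psi_\nu=D_\theta E_\theta\psi_\nu=c_\theta^2\psi_\nu$ for every $\nu$. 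As $\mathrm{span}\{\psi_\nu\}$ is dense in $\SSS^{\sym}$ and both compositions are continuous by (I), the identity in (II) extends to all of $\SSS^{\sym}$. (Alternatively, one can avoid the Hermite basis: insert a regulator $e^{-\varepsilon|\lambda|^2}$, perform the $\lambda$-integration first via the Gaussian transform identity to obtain the Dunkl heat kernel $q_\varepsilon(x,z)$, recognize it as a positive approximate identity of total mass $c_\theta^2$, and let $\varepsilon\downarrow 0$.)

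I expect the main obstacle to be part (I): the formal ``move the differential operators across the kernel'' computation has to be made rigorous, which requires care with the nonlocal reflection terms in the Dunkl operators, with differentiation under the integral sign, and with having the bound $|\B_{-\ii\lambda}(x;\theta)|\le 1$ available for the mixed real/imaginary arguments that occur. The secondary difficulty is the Hermite eigenbasis — the eigenfunction property under $D_\theta$, completeness in $L^2(\R^m,w_\theta\,dx)^{\sym}$, and the exact evaluation of $c_\theta$ — which is standard but not entirely routine. Once (I) and (II) are secured, the passage to ``homeomorphism'' is immediate.
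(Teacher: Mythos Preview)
The paper does not prove this theorem at all: it is quoted verbatim from de Jeu \cite{dJ} and used as a black box in the proof of Theorem~\ref{thm:GBE}. So there is no ``paper's own proof'' to compare your proposal against.

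That said, your outline is a reasonable sketch of how the result is actually established in the Dunkl transform literature. De Jeu's argument indeed hinges on (i) the uniform bound on the Dunkl kernel for purely imaginary parameters, (ii) the intertwining of polynomial multiplication and Dunkl differentiation under the transform (which gives continuity on Schwartz space), and (iii) the Gaussian as a fixed point, with the Mehta--Macdonald integral pinning down the constant $c_\theta$. Your alternative route via the heat-kernel regulator is also standard. The caveats you flag --- justifying differentiation under the integral in the presence of the nonlocal reflection terms, and the completeness of the generalized Hermite functions --- are exactly the places where real work is required, and de Jeu handles them carefully. For the purposes of this paper, though, none of that is needed: the theorem is simply imported.
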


\subsection{Proof of the main theorem}
We begin with the following lemma.

\begin{lem}\label{reductionschwarz}
Let $\{\mm_N\}_{N\geq 1}$, $\mm$ be probability distributions on the Weyl chamber $\W_m$ such that
\begin{equation}\label{schwarz}
\EE^{\mm_N}[f(x_1, \ldots, x_m)] \xrightarrow{N \rightarrow \infty} \EE^{\mm}[f(x_1, \ldots, x_m)]
\end{equation}
for all functions $f$ on $\W_m$ which admit an extension $F$ to $\R^m$ such that $F\in(C_{0}^{\infty})^{\sym}$.
Then $\mm_N \rightarrow \mm$ weakly.
\end{lem}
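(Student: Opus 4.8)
The plan is to reduce the desired weak convergence to two facts: that the family $\{\mm_N\}$ is tight, and that the test functions supplied by the hypothesis --- restrictions to $\W_m$ of elements of $(C_0^\infty)^{\sym}$ --- form a measure-determining class on $\W_m$. Since $\W_m$ is a closed subset of $\R^m$, hence a locally compact, second-countable Polish space, Prokhorov's theorem tells us it is enough to prove that $\{\mm_N\}$ is tight and that every weak subsequential limit of it equals $\mm$.

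First I would establish tightness. Fix $\varepsilon > 0$ and pick $R$ so large that $\mm(\W_m\cap B_R) > 1-\varepsilon$, where $B_R$ is the open ball of radius $R$ about the origin in $\R^m$. Choose a radial $F\in C_0^\infty(\R^m)$ with $0\leq F\leq 1$, $F\equiv 1$ on $\overline{B_R}$, and $\mathrm{supp}\,F\subseteq B_{R+1}$; being radial, $F$ is $S_m$-symmetric, so $f:=F|_{\W_m}$ is an admissible test function. Then $\EE^{\mm}[f]\geq\mm(\W_m\cap B_R)>1-\varepsilon$, while $\EE^{\mm_N}[f]\leq\mm_N(\W_m\cap B_{R+1})$; since $\EE^{\mm_N}[f]\to\EE^{\mm}[f]$ there is $N_0$ such that $\mm_N(\W_m\cap B_{R+1})>1-\varepsilon$ for all $N\geq N_0$. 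Each of the finitely many measures $\mm_1,\dots,\mm_{N_0-1}$ is individually tight, so enlarging the radius yields a compact $K\subset\W_m$ with $\sup_{N}\mm_N(\W_m\setminus K)\leq\varepsilon$, i.e. $\{\mm_N\}$ is tight.

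Next I would identify the subsequential limits. Suppose $\mm_{N_k}\to\mm'$ weakly. For any admissible $f$ its extension $F\in(C_0^\infty)^{\sym}$ is bounded and continuous, so $f$ is bounded and continuous on $\W_m$, and hence $\EE^{\mm'}[f]=\lim_k\EE^{\mm_{N_k}}[f]=\EE^{\mm}[f]$, the last equality because the full sequence $\EE^{\mm_N}[f]$ converges to $\EE^{\mm}[f]$ by hypothesis \eqref{schwarz}. To pass from this to $\mm'=\mm$, take an arbitrary $g\in C_c(\W_m)$ and extend it to $G\in C_c(\R^m)$ by $G(x):=g(x_{(1)},\dots,x_{(m)})$, where $(x_{(1)}\geq\dots\geq x_{(m)})$ is the decreasing rearrangement of the coordinates of $x$; the rearrangement map is continuous and norm-preserving, so $G$ is continuous, compactly supported, and $S_m$-symmetric, with $G|_{\W_m}=g$. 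Mollifying with a radial mollifier $\phi_\delta$ gives $G*\phi_\delta\in(C_0^\infty)^{\sym}$ (radiality of $\phi_\delta$ preserves the symmetry) with $G*\phi_\delta\to G$ uniformly; applying the identity above to $f_\delta:=(G*\phi_\delta)|_{\W_m}$ and sending $\delta\to0$ yields $\EE^{\mm'}[g]=\EE^{\mm}[g]$ for every $g\in C_c(\W_m)$. By the Riesz--Markov representation theorem (both $\mm$ and $\mm'$ are finite, hence regular, Borel measures on the locally compact space $\W_m$) this forces $\mm'=\mm$. Thus every subsequential limit of the tight family $\{\mm_N\}$ is $\mm$, and therefore $\mm_N\to\mm$ weakly.

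I expect the tightness step to be the only genuine obstacle: because the hypothesis supplies only compactly supported test functions, one cannot directly see that no mass of $\mm_N$ escapes to infinity, and the point is to recover this from the sandwich $\mm_N(\W_m\cap B_{R+1})\geq\EE^{\mm_N}[f]\to\EE^{\mm}[f]\geq\mm(\W_m\cap B_R)$ combined with the (automatic) tightness of the finitely many initial measures. The remaining ingredients --- continuity of the sorting map for the symmetric extension, invariance of symmetry under convolution with a radial mollifier, and uniqueness in Riesz--Markov --- are routine.
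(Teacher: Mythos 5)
Your proof is correct, and it reaches the same conclusion by a somewhat different packaging of essentially the same ingredients. The paper's proof skips the abstract machinery: it takes an arbitrary bounded continuous $g$ on $\W_m$, extends it to a bounded continuous symmetric $G$, and directly estimates $|\EE^{\mm_N}[g]-\EE^{\mm}[g]|$ by splitting into a piece near a large ball (controlled by uniform approximation of $G$ by some $F\in(C_0^\infty)^{\sym}$) and a piece outside the ball (controlled by the mass bound obtained from bump test functions), then passing $N\to\infty$. You instead route through Prokhorov's theorem and uniqueness of subsequential limits via Riesz--Markov. The key observation --- that a single admissible bump function, used with the hypothesis, transfers the tail-mass control from $\mm$ to all $\mm_N$ with large $N$ --- is identical in both; you simply separate it out as a tightness statement rather than embedding it in a triangle-inequality estimate. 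Two things you do more carefully than the paper: you cleanly distinguish the inner ball $B_R$ (where the bump is $1$) from the outer ball $B_{R+1}$ (outside which it vanishes), avoiding the minor radius confusion in the paper's inequality $\mm_N(\W_m\setminus B(0,R))<\epsilon/M$; and you make explicit both the sorting-map construction of the symmetric extension (the paper says ``clearly'') and the preservation of $S_m$-symmetry under convolution with a radial mollifier. The trade-off is that your argument invokes more external theorems (Prokhorov, Ulam/regularity of Borel measures, Riesz--Markov) where the paper is self-contained, but it is arguably easier to check.
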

\begin{proof}
Let $g$ be any bounded, continuous function on $\W_m$; say $\sup_{x\in\W_m}|g(x)| < M$. 
Clearly $g$ is the restriction of a bounded, continuous and symmetric function $G$ on $\R^m$; also $\sup_{x\in\R^m}{|G(x)|} < M$.

Let $\epsilon > 0$ be arbitrary. For any $r>0$, let $B(0, r)$ (resp. $B[0, r]$) be the open (resp. closed) ball of radius $r$ in $\R^m$.
Let $R>0$ be large enough so that
$$\mm(\W_m \setminus B(0, R)) < \epsilon/M.$$
Then for large enough $N$, we also have
$$\mm_N(\W_m \setminus B(0, R)) < \epsilon/M.$$
(Use the hypothesis $(\ref{schwarz})$ for nonnegative functions in $(C_0^{\infty})^{\sym}$ which are $1$ in $B(0, R)$ and are $0$ outside a slightly larger ball.)

Find a function $F\in C_0^{\infty}$ on $\R^m$ such that $\sup_{x\in B[0, R]}|F(x) - G(x)| < \epsilon$ and $\sup_{x\in\R^m}{|F(x)|} < 2M$.
By symmetrizing, we can actually find $F\in(C_0^{\infty})^{\sym}$ with these properties.
Thus its restriction $f$ to $\W_m$ satisfies $(\ref{schwarz})$, by assumption.
Moreover it satisfies $\sup_{x\in\W_m}{|f(x)|} < 2M$ and $\sup_{x\in B[0, R]\cap\W_m}{|f(x) - g(x)|} < \epsilon$.
We can bound:
$$
|\EE^{\mm_N}[g] - \EE^{\mm}[g]| \leq |\EE^{\mm_N}[g - f]| + |\EE^{\mm}[g - f]| + |\EE^{\mm_N}[f] - \EE^{\mm}[f]|.
$$
Also,
\begin{align*}
|\EE^{\mm_N}[g - f]| &\leq |\EE^{\mm_N}[(g - f)\mathbf{1}_{B[0, R]}]| + (M+2M)\mm_N(\W_m \setminus B(0, R))
< \epsilon + 3M \cdot \frac{\epsilon}{M} = 4\epsilon;\\
|\EE^{\mm}[g - f]| &\leq |\EE^{\mm}[(g - f)\mathbf{1}_{B[0, R]}]| + (M+2M)\mm(\W_m \setminus B(0, R))
< \epsilon + 3M \cdot \frac{\epsilon}{M} = 4\epsilon.
\end{align*}
It follows that
$$
|\EE^{\mm_N}[g] - \EE^{\mm}[g]| \leq 8\epsilon  + |\EE^{\mm_N}[f] - \EE^{\mm}[f]|.
$$
Since $\epsilon > 0$ was arbitrary, this finally shows that $\EE^{\mm_N}[g] \xrightarrow{N \rightarrow \infty} \EE^{\mm}[g]$, and we are done.
\end{proof}

\smallskip

\begin{proof}[Proof of Theorem $\ref{thm:GBE}$]

\textit{Step 1.}
Let $m \geq 1$ be an integer and let $\{a(N) \in \PP_N\}_{N \geq 1}$ be a regular sequence of ordered tuples.
Let $(a_i^{(k, N)})_{1 \leq i \leq k \leq N-1}$ be a vector distributed according to the orbital beta corners process $\mm^{a(N), \theta}$ with top level $a(N)$.
Also let $\mm^{a(N), \theta}_m$ be the pushforward of the distribution of $(a_i^{(k, N)})_{1 \leq i \leq k \leq N-1}$ under the map
$$(a_i^{(k, N)})_{1 \leq i \leq k \leq N-1} \mapsto (a^{(m, N)}_j)_{1 \leq j \leq m}.$$

Denote by $\mm_N$ the pushforward of $\mm_m^{a(N), \theta}$ under the map
$$(x_1, \ldots, x_m) \mapsto \left( \frac{x_1 - N \E[\mu_N]}{\sqrt{N \Var[\mu]}}, \ldots, \frac{x_m - N \E[\mu_N]}{\sqrt{N \Var[\mu]}} \right)$$
and denote by $\mm$ the Gaussian beta ensemble of Definition $\ref{def:betagaussian}$.
We shall prove the weak convergence $\mm_N \xrightarrow{N\rightarrow\infty} \mm$, which is the statement of Theorem $\ref{thm:GBE}$.
Because of Lemma $\ref{reductionschwarz}$, it suffices to prove
\begin{equation}\label{conv.schwarz}
\EE^{\mm_N}[f(x_1, \ldots, x_m)] \xrightarrow{N \rightarrow \infty} \EE^{\mm}[f(x_1, \ldots, x_m)]
\end{equation}
for functions $f$ on $\W_m$ that admit extensions to $\R^m$ that belong to $(C_0^{\infty})^{\sym}$.
We will in fact prove $(\ref{conv.schwarz})$ more generally for all functions $f$ that admit extensions to $\R^m$ that belong to $\SSS^{\sym}$.

\smallskip

\textit{Step 2.}
Proposition $\ref{prop:obsorbital}$ shows
$$
\EE^{\mm^{a(N), \theta}_m} \left[ \B_{(a_1, \ldots, a_m)}(y_1, \ldots, y_m; \theta) \right]
= \B_{a(N)}(y_1, \ldots, y_m, 0^{N-m}; \theta),
$$
for any $y_1, \ldots, y_m\in\C$.
On the other hand, Corollaries $\ref{cor:additionlabel}$ and $\ref{cor:multiplicationlabel}$ show
\begin{multline*}
\B_{\left( \frac{a_1 - N \E[\mu_N]}{\sqrt{N \Var[\mu]}}, \ldots, \frac{a_m - N \E[\mu_N]}{\sqrt{N \Var[\mu]}} \right)}(y_1, \ldots, y_m; \theta)\\
= \B_{(a_1, \ldots, a_m)}\left( \frac{y_1}{\sqrt{N \Var[\mu]}}, \ldots, \frac{y_m}{\sqrt{N \Var[\mu]}} \right)
\exp\left( -\frac{\sqrt{N}\E[\mu_N]}{\sqrt{\Var[\mu]}}(y_1 + \ldots + y_m) \right).
\end{multline*}
Combining the last two displays, and by our definition of $\mm_N$, we have
\begin{multline*}
\EE^{\mm_N}\left[ \B_{(a_1, \ldots, a_m)}(y_1, \ldots, y_m; \theta) \right] = \EE^{\mm^{a(N), \theta}_m}\left[ \B_{\left( \frac{a_1 - N \E[\mu_N]}{\sqrt{N \Var[\mu]}}, \ldots, \frac{a_m - N \E[\mu_N]}{\sqrt{N \Var[\mu]}} \right)}(y_1, \ldots, y_m; \theta) \right]\\
= \B_{a(N)}\left( \frac{y_1}{\sqrt{N \Var[\mu]}}, \ldots, \frac{y_m}{\sqrt{N \Var[\mu]}}, 0^{N-m}; \theta \right)
\exp\left( -\frac{\sqrt{N}\E[\mu_N]}{\sqrt{\Var[\mu]}}(y_1 + \ldots + y_m) \right).
\end{multline*}
Proposition $\ref{prop:obsbessel}$ gives the analogous observable for $\mm$:
$$\EE^{\mm}[ \B_{(a_1, \ldots, a_m)}(y_1, \ldots, y_m; \theta) ] = \exp\left( \frac{1}{2\theta} \sum_{i=1}^m{y_i^2} \right).$$
Then, by taking into account the last two displays, Theorem $\ref{thm:asymptoticbessels}$ implies the limit
\begin{equation}\label{conv.bessel}
\EE^{\mm_N}[\B_{(a_1, \ldots, a_m)}(y_1, \ldots, y_m; \theta)] \xrightarrow{N \rightarrow \infty} \EE^{\mm}[\B_{(a_1, \ldots, a_m)}(y_1, \ldots, y_m; \theta)].
\end{equation}
Note that $(\ref{conv.bessel})$ is exactly the desired $(\ref{conv.schwarz})$, but for $f(a_1, \ldots, a_m) = \B_{(a_1, \ldots, a_m)}(y_1, \ldots, y_m; \theta)$.
It remains to extend it for all $f\in\SSS^{\sym}$.

\smallskip

\textit{Step 3.}
Let $f\in\SSS^{\sym}$, so that $f = c_{\theta}^{-2} \cdot D_{\theta}(E_{\theta}f)$ by Theorem $\ref{thm:dJ}$.
This means
$$f(a) = c_{\theta}^{-2} \int_{\R^m}{(E_{\theta}f)(x)\B_{-\ii a}(x; \theta) w_{\theta}(x)dx},$$
where $a = (a_1, \ldots, a_m)\in\R^m$ and we denote $-\ii a := (-\ii a_1, \ldots, -\ii a_m)$.
Corollary $\ref{cor:multiplicationlabel}$ shows $\B_{-\ii a}(x; \theta) = \B_{a}(-\ii x; \theta)$, therefore
\begin{equation}\label{inversioneqn}
f(a) = c_{\theta}^{-2} \int_{\R^m}{(E_{\theta}f)(x)\B_{a}(-\ii x; \theta) w_{\theta}(x)dx}.
\end{equation}
By virtue of Corollary $\ref{cor:bounded}$, the modulus of the integrand in $(\ref{inversioneqn})$ is upper bounded by $C(1 + |x_1|)^{2m\theta}\cdots (1 + |x_m|)^{2m\theta} |E_{\theta}f(x)|$, for some constant $C>0$.
Since $E_{\theta}f \in \SSS^{\sym}$, it follows that the integral in $(\ref{inversioneqn})$ converges absolutely, and uniformly on $a$.
As a result, we can apply Fubini's theorem and the dominated convergence theorem as follows:
\begin{align*}
\EE^{\mm_N}[f(a)] &= c_{\theta}^{-2} \cdot \EE^{\mm_N}\left[ \int_{\R^m}{(E_{\theta}f)(x)\B_{a}(-\ii x; \theta) w_{\theta}(x)dx} \right]\\
&= c_{\theta}^{-2} \cdot \int_{\R^m}{(E_{\theta}f)(x) \EE^{\mm_N}[ \B_{a}(-\ii x; \theta) ] w_{\theta}(x)dx}\\
&\rightarrow c_{\theta}^{-2} \cdot \int_{\R^m}{(E_{\theta}f)(x) \EE^{\mm}[ \B_{a}(-\ii x; \theta) ] w_{\theta}(x)dx}\\
&= c_{\theta}^{-2} \cdot \EE^{\mm} \left[ \int_{\R^m}{(E_{\theta}f)(x) \B_{a}(-\ii x; \theta) w_{\theta}(x)dx} \right]\\
&= \EE^{\mm} [f(a)].
\end{align*}
The last equation above uses $(\ref{inversioneqn})$ again.
We are done.
\end{proof}

\section{Proof of the formulas in Section $\ref{sec:formulas}$}\label{sec:besselformulas}

\subsection{Formulas for normalized Jack polynomials}

Given a weakly decreasing sequence of integers $\lambda = (\lambda_1 \geq \dots \geq \lambda_N) \in \Z^N$, the Jack polynomial $J_{\lambda}(x_1, \ldots, x_N; \theta)$ is a symmetric, homogeneous Laurent polynomial of degree $|\lambda|$ with coefficients being rational functions of $\theta$.
We refer the reader to \cite[Chapter VI.10]{M} or \cite{St} for details on their definition, and also to \cite[Section 2]{Cu18b} for a short summary of their properties.

For two integers $1 \leq m \leq N$, a sequence $\lambda = (\lambda_1 \geq \dots \geq \lambda_N)\in\Z^N$, and the variables $x_1, \ldots, x_m$, let us have a special notation for a certain normalization of Jack polynomials:
$$
J_{\lambda}(x_1, \ldots, x_m; N; \theta) := \frac{J_{\lambda}(x_1, \ldots, x_m, 1^{N-m}; \theta)}{J_{\lambda}(1^N; \theta)}.
$$
The denominator $J_{\lambda}(1^N; \theta)$ is nonzero whenever $\theta > 0$, see \cite[VI.10]{M}.

\begin{thm}[\cite{Cu18b}, Theorem 2.8]
Let $\lambda = (\lambda_1 \geq \dots \geq \lambda_N) \in \Z^N$ and $y\in\C$, $\Re y > 0$. Then the integral below converges absolutely and the identity holds:
$$
J_{\lambda}\left(e^y; N; \theta\right) = \frac{\Gamma(\theta N)}{(1 - e^{-y})^{\theta N - 1}}\frac{1}{2\pi\ii}\int^{-\infty}_{(0^+)}{\exp(yz)\prod_{i=1}^N{\frac{\Gamma(z - (\lambda_i -\theta i + \theta))}{\Gamma(z - (\lambda_i - \theta i) )}}dz}.
$$
The contour above is counter-clockwise oriented and consists of the segment $[M + r\ii, \ M - r\ii]$ and horizontal lines $[M + r\ii, -\infty+r\ii)$, $[M - r\ii, -\infty - r\ii)$, for any fixed $M > \lambda_1$, $r > 0$.
\end{thm}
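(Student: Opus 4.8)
The identity is the $m=1$ specialization of the contour formula for normalized Jack polynomials of \cite{Cu18b}, and the natural route is induction on the number of variables $N$, with $\lambda = (\lambda_1 \geq \cdots \geq \lambda_N) \in \Z^N$ and $y$, $\Re y > 0$, fixed throughout. The base case $N=1$ reads $J_\lambda(e^y; 1; \theta) = e^{y\lambda_1}$; on the other side, the substitution $w = z - \lambda_1$ turns the integral into the classical evaluation
\begin{equation*}
\frac{1}{2\pi\ii}\int_{(0^+)}^{-\infty}{e^{yw}\,\frac{\Gamma(w)}{\Gamma(w + \theta)}\,dw} = \frac{(1 - e^{-y})^{\theta - 1}}{\Gamma(\theta)},
\end{equation*}
which follows by collecting the residues of $\Gamma(w)$ at $w = -k$ and summing the binomial series $\sum_{k \geq 0}\frac{(1-\theta)_k}{k!}\,e^{-yk} = (1 - e^{-y})^{\theta - 1}$ (absolutely convergent since $\Re y > 0$); the prefactor $\Gamma(\theta)(1-e^{-y})^{1-\theta}$ cancels it. For the inductive step I would peel off one variable equal to $1$ via the Jack branching rule, $J_\lambda(e^y, 1^{N-1}; \theta) = \sum_{\mu \prec \lambda}\psi_{\lambda/\mu}(\theta)\,J_\mu(e^y, 1^{N-2}; \theta)$ with $\mu$ running over weakly decreasing integer $(N-1)$-tuples interlacing $\lambda$ and $\psi_{\lambda/\mu}(\theta)$ the explicit product-form branching coefficient, then divide by $J_\lambda(1^N;\theta)$ and apply the induction hypothesis to each $J_\mu(e^y; N-1; \theta) = J_\mu(e^y, 1^{N-2};\theta)/J_\mu(1^{N-1};\theta)$.

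The analytic preliminaries are routine and I would clear them first. Stirling's formula shows $\prod_{i=1}^N \Gamma(z - \lambda_i + \theta(i-1))/\Gamma(z - \lambda_i + \theta i)$ decays like $|z|^{-N\theta}$ as $|z| \to \infty$ away from the poles, and together with $|e^{yz}| = e^{(\Re y)(\Re z)}$ this yields absolute convergence of the integral, independence of the admissible $M > \lambda_1$ and $r > 0$ (Cauchy's theorem), and the collapse of the contour to a residue sum. One useful simplification: although the integrand $e^{yz}\prod_i \Gamma(z - \lambda_i + \theta(i-1))/\Gamma(z - \lambda_i + \theta i)$ a priori could have poles at every point of $\lambda_i - \theta(i-1) - \{0,1,2,\dots\}$, for $\theta$ outside a countable set each candidate pole with index $i \geq 2$ is cancelled by a zero of the factor $1/\Gamma(z - \lambda_{i-1} + \theta(i-1))$, so the genuine poles are only the simple poles at $z = \lambda_1 - k$, $k \geq 0$, and $\frac{1}{2\pi\ii}\int_{(0^+)}^{-\infty}(\cdots)\,dz = \sum_{k \geq 0}\operatorname{Res}_{z = \lambda_1 - k}(\cdots)$. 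Since both sides of the asserted identity are meromorphic (indeed continuous) in $\theta$, it suffices to prove it for such generic $\theta$ and then pass to the limit.

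The heart of the matter, and the step I expect to be the main obstacle, is the algebraic identity that closes the induction: one must show that the finite sum
\begin{equation*}
\sum_{\mu \prec \lambda}\frac{\psi_{\lambda/\mu}(\theta)\,J_\mu(1^{N-1};\theta)}{J_\lambda(1^N;\theta)}\cdot\frac{\Gamma((N-1)\theta)}{(1 - e^{-y})^{(N-1)\theta - 1}}\,\frac{1}{2\pi\ii}\int_{(0^+)}^{-\infty}{e^{yz}\prod_{i=1}^{N-1}\frac{\Gamma(z - \mu_i + \theta(i-1))}{\Gamma(z - \mu_i + \theta i)}\,dz}
\end{equation*}
equals the asserted $N$-variable integral. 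The two sides carry different powers of $1 - e^{-y}$, so the missing factor $(1 - e^{-y})^{-\theta}$ must emerge from an Anderson/beta-type one-dimensional integration concealed in the $\mu$-summation. Concretely I would establish, at the level of integrands, a ``contour Pieri identity'' realizing $\prod_{i=1}^N \Gamma(z - \lambda_i + \theta(i-1))/\Gamma(z - \lambda_i + \theta i)$ as a convolution, in an auxiliary variable, of the $(N-1)$-variable Gamma-products against an explicit kernel, and then identify the emerging coefficients with $\psi_{\lambda/\mu}(\theta)J_\mu(1^{N-1};\theta)/J_\lambda(1^N;\theta)$ using the product formulas for the branching coefficient and for the principal specialization $J_\mu(1^K;\theta)$ from \cite[Ch.~VI.10]{M}. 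This is precisely the ``Pieri integral formula'' underlying \cite{Cu18b}; alternatively one may quote the Macdonald-polynomial version proved there and take the Jack limit $q = t^\theta \to 1$. With the Jack identity in hand, Theorems \ref{besselthm2} and \ref{besselthm1} come out by the standard degeneration in which the parts $\lambda_i$ and the exponent are rescaled so that $J_\lambda(\cdot; N; \theta) \to \B$, the branch cut there appearing exactly because $\Gamma(z - \lambda_i + \theta(i-1))/\Gamma(z - \lambda_i + \theta i) \to (z - a_i)^{-\theta}$ in the limit while the Gamma ratios themselves were single-valued.
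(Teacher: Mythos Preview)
The paper does not prove this theorem at all: it is stated with the citation \cite{Cu18b}, Theorem~2.8, and used as a black box. The paper's Section~7 only proves the \emph{Bessel} formulas (Theorems~\ref{besselthm2}--\ref{besselthm3}) by degenerating the cited Jack formulas via Theorem~\ref{thm:okol}. So there is nothing in the paper to compare your argument to; you are effectively sketching the content of the cited reference.

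As a sketch, your outline is reasonable but incomplete at exactly the point you yourself flag. The base case $N=1$ is fine, and the analytic preliminaries (decay from Stirling, contour independence, reduction to residues for generic $\theta$) are correct. However, the inductive step rests entirely on the ``contour Pieri identity'' that you do not prove: you describe what it should look like (a convolution producing the extra factor $(1-e^{-y})^{-\theta}$ and matching the branching coefficients $\psi_{\lambda/\mu}(\theta)J_\mu(1^{N-1};\theta)/J_\lambda(1^N;\theta)$), but then say ``this is precisely the Pieri integral formula underlying \cite{Cu18b}; alternatively one may quote the Macdonald-polynomial version proved there.'' That is circular relative to the task: the theorem \emph{is} Theorem~2.8 of \cite{Cu18b}, so deferring the key step back to \cite{Cu18b} is not a proof. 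If you want a self-contained argument, you must actually carry out the one-variable integration that collapses the $\mu$-sum into the $N$-variable Gamma product; this is a nontrivial Barnes-type identity and is the substance of the cited paper. Your final paragraph about degenerating to the Bessel case is correct and matches what the present paper does in Section~7.3, but that is a separate statement (Theorem~\ref{besselthm2}), not the Jack theorem under discussion.
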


\begin{thm}[\cite{Cu18b}, Theorem 2.7]\label{jackthm1}
Let $\lambda = (\lambda_1 \geq \dots \geq \lambda_N) \in \Z^N$ and $y\in\C$, $\Re y > 0$. Then the integral below converges absolutely and the identity holds:
\begin{equation*}
J_{\lambda}\left(e^{-y}; N; \theta\right) = -\frac{\Gamma(\theta N)}{(1 - e^{-y})^{\theta N - 1}}\frac{1}{2\pi\ii}\int^{+\infty}_{(0^-)}{\exp(-yz)\prod_{i=1}^N{\frac{\Gamma(\lambda_i + \theta(N-i)-z)}{\Gamma(\lambda_i + \theta(N-i+1)-z)}}dz}.
\end{equation*}
The contour above is counter-clockwise oriented and consists of the segment $[M+r\ii, \ M-r\ii]$ and horizontal lines $[M+r\ii, +\infty+r\ii)$, $[M-r\ii, +\infty-r\ii)$, for any fixed $\lambda_N > M$, $r > 0$.
\end{thm}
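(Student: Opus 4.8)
The plan is to prove first the Jack-polynomial identity --- which is the statement quoted here from \cite{Cu18b} --- by induction on $N$, using the branching rule for Jack polynomials together with a telescoping identity for products of $\Gamma$-functions; the companion formula (Theorem 2.8 above) then follows by the mirror argument, closing the contour to the left instead of to the right, and Theorem \ref{besselthm1} follows from it by the rescaling degeneration ($\lambda_i = \lfloor N a(N)_i \rfloor$, $y \to y/N$) used elsewhere in this section. As a preliminary, absolute convergence of the integral is immediate: by Stirling's formula and the reflection formula each factor $\Gamma(\lambda_i+\theta(N-i)-z)/\Gamma(\lambda_i+\theta(N-i+1)-z)$ has modulus $O(|z|^{-\theta})$ as $\Re z\to+\infty$ along the lines $\Im z=\pm r$, so the integrand is $O\bigl(e^{-(\Re y)(\Re z)}|z|^{-\theta N}\bigr)$, which decays since $\Re y>0$; moreover every pole of the integrand sits at some $z=\lambda_i+\theta(N-i)+k$ with $k\in\Z_{\geq 0}$, hence in $\{\Re z\geq\lambda_N\}$, so by Cauchy's theorem the value of the integral is independent of the admissible $M<\lambda_N$ and $r>0$.

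For the base case $N=1$ one has $J_{(\lambda_1)}(e^{-y};1;\theta)=e^{-\lambda_1 y}$. I would close the contour to the right (legitimate because $e^{-yz}$ decays there), so that the integral equals, up to the sign fixed by the counterclockwise orientation, the sum of the residues of $e^{-yz}\Gamma(\lambda_1-z)/\Gamma(\lambda_1+\theta-z)$ at the poles $z=\lambda_1+k$, $k\geq 0$. A short computation with the reflection formula gives the residue at $z=\lambda_1+k$ equal to $-\Gamma(\theta)^{-1}\binom{k-\theta}{k}e^{-y(\lambda_1+k)}$, and since $\sum_{k\geq 0}\binom{k-\theta}{k}e^{-yk}=(1-e^{-y})^{\theta-1}$ is the generalized binomial series, multiplying by the prefactor $-\Gamma(\theta)(1-e^{-y})^{1-\theta}$ recovers $e^{-\lambda_1 y}$.

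For the inductive step, assuming the identity in rank $N-1$, I would apply the Jack branching rule \cite[VI.10]{M} with the newly adjoined variable set equal to $1$ (so that it contributes the trivial factor $1^{|\lambda/\mu|}=1$) to obtain the recursion
\[
J_\lambda(e^{-y};N;\theta)=\sum_{\mu\prec\lambda}\psi_{\lambda/\mu}(\theta)\,\frac{P_\mu(1^{N-1};\theta)}{P_\lambda(1^{N};\theta)}\,J_\mu(e^{-y};N-1;\theta),
\]
where $\mu$ runs over the $(N-1)$-tuples interlacing $\lambda$ and the coefficients are the explicit products of Stanley. Substituting the inductive hypothesis for $J_\mu(e^{-y};N-1;\theta)$, exchanging the finite $\mu$-sum with the $z$-integral (justified by the bound above), and using $(1-e^{-y})^{-\theta}=\sum_{j\geq 0}\frac{(\theta)_j}{j!}e^{-jy}$ to reconcile the powers of $1-e^{-y}$ occurring on the two sides, one reduces the claim to a purely analytic identity of meromorphic functions of $z$: the $\mu$-sum of products $\prod_{i=1}^{N-1}\Gamma(\mu_i+\theta(N-1-i)-z)/\Gamma(\mu_i+\theta(N-i)-z)$, weighted by those coefficients, should equal $\frac{\Gamma(\theta N)}{\Gamma(\theta(N-1))}$ times a $j$-sum of shifted products $\prod_{i=1}^{N}\Gamma(\lambda_i+\theta(N-i)+j-z)/\Gamma(\lambda_i+\theta(N-i+1)+j-z)$. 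This analytic identity is the crux, and the step I expect to be the main obstacle; I would attack it by using Stanley's formula for $\psi_{\lambda/\mu}$ to perform the $\mu$-sum one coordinate at a time --- $\mu_i$ ranging over the integer interval $[\lambda_{i+1},\lambda_i]$ --- so that each inner sum telescopes into a ratio of $\Gamma$-functions, the ``interior'' terms cancel, only the endpoints $\mu_i\in\{\lambda_{i+1},\lambda_i\}$ survive, and the endpoint contributions combine across $i$ to produce the extra $\Gamma$-factor and the $j$-sum. It is cleanest to verify this for $\theta$ outside a countable set, where the poles of the integrands are simple and distinct, and then extend to all $\theta>0$ by continuity of both sides.

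Finally, two remarks. A more direct (but combinatorially heavier) route is to close the contour in Theorem \ref{jackthm1} to the right at once: a vanishing of one of the $\Gamma$-ratios forces only the poles $z=\lambda_N+k$ to contribute, and the residue series becomes a non-terminating generalized hypergeometric function of $e^{-y}$ whose product with $(1-e^{-y})^{\theta N-1}$ must then be recognized as the Laurent polynomial $J_\lambda(e^{-y};N;\theta)$ --- a hypergeometric-transformation identity of comparable difficulty. And a useful sanity check is $\theta=1$: there $J_\lambda$ is a Schur polynomial and the formula degenerates to the classical Weyl-character / Cauchy-kernel contour expression for $s_\lambda(e^{-y},1^{N-1})/s_\lambda(1^N)$ --- although one cannot merely analytically continue from $\theta=1$, since for fixed $\lambda,N$ the left-hand side is rational in $\theta$ while the right-hand side is transcendental in $\theta$ through the $\Gamma$-functions, so the $\Gamma$-factors genuinely have to be handled as above.
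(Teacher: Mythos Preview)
The paper does not prove this theorem; it is quoted from \cite{Cu18b} (Theorem~2.7 there) and then used as a black box, together with the degeneration of Theorem~\ref{thm:okol}, to obtain Theorem~\ref{besselthm1}. So there is no proof in the present paper to compare your proposal against.

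On its own merits, your argument has a genuine gap at the inductive step. The base case $N=1$ is a clean residue computation and is correct. But for $N\geq 2$ you reduce everything to an ``analytic identity'' between a $\psi_{\lambda/\mu}$-weighted sum over interlacing $\mu\prec\lambda$ of rank-$(N{-}1)$ $\Gamma$-products and a $j$-sum of shifted rank-$N$ $\Gamma$-products, and then propose to verify it by summing over the $\mu_i$ one coordinate at a time so that ``each inner sum telescopes \dots\ only the endpoints $\mu_i\in\{\lambda_{i+1},\lambda_i\}$ survive.'' This is exactly where the difficulty lives, and the telescoping does not work as described: Stanley's branching coefficient $\psi_{\lambda/\mu}(\theta)$ is a product over boxes of the skew shape whose arm- and leg-lengths couple the different $\mu_i$'s to one another and to the $\lambda_j$'s, so the $\mu$-sum does not factor into independent one-variable sums, and there is no mechanism forcing interior terms to cancel. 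You yourself flag this as ``the main obstacle,'' but what is written is a hope rather than an argument; without an explicit computation or an identification with a known multivariate hypergeometric summation, the inductive step is unproved.

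Two smaller issues. First, the use of $(1-e^{-y})^{-\theta}=\sum_{j\geq 0}\frac{(\theta)_j}{j!}e^{-jy}$ introduces an infinite $j$-sum inside a contour integral whose integrand already has infinitely many poles marching off to $+\infty$; interchanging this series with the integral (and with a subsequent closing of the contour) needs justification beyond the uniform bound you state. Second, your ``direct'' alternative---closing the contour at once and recognizing the residue series as $(1-e^{-y})^{1-\theta N}J_\lambda(e^{-y};N;\theta)$---is not obviously harder than the branching route and is in fact closer to how such single-variable integral representations are usually established; if you pursue a self-contained proof, that is the more promising line.
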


\begin{thm}[\cite{Cu18b}, Theorem 3.1]\label{jackthm2}
Let $\lambda = (\lambda_1 \geq \dots \geq \lambda_N) \in \Z^N$, and let $m, N\in\N_+$ be such that $1\leq m\leq N-1$. Consider also any real numbers $y_1, \dots, y_m, y$ satisfying
\begin{equation*}
\begin{gathered}
y_1 > y_2 > \dots > y_m > y > 0,\\
\min_{i = 1, 2, \dots, m-1}{(y_i - y_{i+1})} > y.
\end{gathered}
\end{equation*}
Then
\begin{multline}\label{eqn:pierijack}
J_{\lambda}(e^{-y_1}, \ldots, e^{-y_m}; N; \theta)
J_{\lambda}(e^{-y}; N; \theta) = \frac{\Gamma(N\theta)}
{\Gamma((N - m)\theta)\Gamma(\theta)^{m}}
\frac{\prod_{1\leq i < j\leq m}{(1 - e^{-y_i + y_j})^{1 - 2\theta}}}{e^{ym\theta}(1 - e^{-y})^{m\theta}}\\
\times \frac{1}{\prod_{i=1}^m{(1 - e^{-y_i})^{\theta}}} \int\dots\int { G^{\trig}_{\theta, y_1, \dots, y_m, y}(z_1, \ldots, z_m) F^{\trig}_{y_1, \ldots, y_m, y}(z_1, \ldots, z_m)^{\theta(N-m)-1} }\\
J_{\lambda}\left(e^{- (y_1 + z_1)}, \ldots, e^{-(y_m + z_m)}, e^{-y+(z_1 + \dots + z_m)}; N; \theta\right) \prod_{i=1}^m{\left( (1 - e^{-z_i})^{\theta-1}dz_i \right)},
\end{multline}
where the functions $G^{\trig}_{\theta, y_1, \ldots, y_m, y}$ and $F^{\trig}_{y_1, \ldots, y_m, y}$ are (denote $z_{m+1} := y - (z_1 + \dots + z_m)$ and $y_{m+1} := 0$ to simplify notation):
\begin{equation}\label{defn:FGfnstrig}
\begin{aligned}
G^{\trig}_{\theta, y_1, \ldots, y_m, y}(z_1, \ldots, z_m) :=& \exp\left( {\theta (mz_1 + (m-1)z_2 \cdots + z_m)} \right)
\prod_{1\leq i < j\leq m}{\left(1 - e^{-y_i + y_j - z_i}\right)^{\theta-1}}\\
&\times\prod_{1\leq i<j\leq m+1}{(1 - e^{-y_i - z_i + y_j +z_j}) (1 - e^{-y_i + y_j + z_j})^{\theta-1} },\\
F_{y_1, \ldots, y_m, y}^{\trig}(z_1, \ldots, z_m) :=& (1 - e^{-y})^{-1} \prod_{j=1}^m{(1 - e^{-y_j})^{-1}} \prod_{i=1}^{m+1}{(1 - e^{-y_i - z_i})},
\end{aligned}
\end{equation}
and the domain of integration $\V_y$ in the integral of $(\ref{eqn:pierijack})$ is the compact subset of $\R^m$ defined by the inequalities
\begin{equation*}
\left\{
\begin{gathered}
z_1, \ldots, z_m \geq 0,\\
y \geq z_1 + \dots + z_m.
\end{gathered}
\right.
\end{equation*}
\end{thm}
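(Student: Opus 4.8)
The plan is to prove \eqref{eqn:pierijack} as a Jack-polynomial (equivalently, $\theta$-deformed) analogue of the classical multiplication formula for normalized characters of $U(N)$, by passing through a \emph{multivariate} contour-integral representation of the normalized Jack polynomial that extends Theorem \ref{jackthm1} from one to $m$ arguments. The first step is therefore to establish such a representation: there should be an explicit constant $c_{N,m,\theta}$ and an $m$-fold, suitably nested, counterclockwise contour along which
\[
J_\lambda(e^{-y_1},\dots,e^{-y_m};N;\theta)=c_{N,m,\theta}\oint\!\cdots\!\oint \Phi(z_1,\dots,z_m)\prod_{k=1}^m\Big[e^{-y_k z_k}\prod_{i=1}^N\frac{\Gamma(\lambda_i+\theta(N-i)-z_k)}{\Gamma(\lambda_i+\theta(N-i+1)-z_k)}\Big]\,dz_k,
\]
where $\Phi$ is a product over pairs $i<j$ of ratios of Gamma functions in the differences $z_i-z_j$, encoding the interaction between the contour variables. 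I would establish this by induction on $m$: the base case $m=1$ is Theorem \ref{jackthm1}, and the inductive step peels off one argument using the Pieri (branching) rule for Jack polynomials together with a Barnes-type evaluation that converts the resulting branching sum into one more contour integral. The hypotheses $\Re y_k>0$ together with Stirling's asymptotics guarantee absolute convergence, and the branch of the logarithm forces the use of infinite contours, exactly as in Theorems \ref{jackthm1}--\ref{besselthm3}.

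With this representation in hand, I would multiply the $m$-variable formula for $J_\lambda(e^{-y_1},\dots,e^{-y_m};N;\theta)$ by the one-variable formula of Theorem \ref{jackthm1} for $J_\lambda(e^{-y};N;\theta)$, producing an $(m+1)$-fold contour integral for the left-hand side of \eqref{eqn:pierijack}; the decisive point is that the $\lambda$-dependent factor attached to each of the $m+1$ contour variables has the identical shape. The target is to recognize this $(m+1)$-fold integral as the $(m+1)$-variable representation from the first step, evaluated at $e^{-(y_1+z_1)},\dots,e^{-(y_m+z_m)},e^{-y+(z_1+\dots+z_m)}$ and integrated against the weight in \eqref{eqn:pierijack}. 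Concretely, I would deform the $m+1$ contours of the left-hand side toward one another; because in the target the $(m+1)$-st argument is tied to the others, the deformation is obstructed by certain poles of $\Phi$, and collecting the residues converts the remaining integration into a bounded integral over the simplex $\V_y=\{z_i\ge 0,\ z_1+\dots+z_m\le y\}$, the integrand being the $(m+1)$-variable contour integral times an elementary factor. Computing the residues of the relevant ratios of Gamma functions then reproduces exactly $G^{\trig}_{\theta,y_1,\dots,y_m,y}$, the power $(F^{\trig}_{y_1,\dots,y_m,y})^{\theta(N-m)-1}$, the factor $\prod_i(1-e^{-z_i})^{\theta-1}$, and the elementary prefactors of \eqref{defn:FGfnstrig} and \eqref{eqn:pierijack}.

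One genuine integral remains, namely the residual integration over $\V_y$, and here I would invoke Anderson's integral --- the same classical (Dixon--Anderson) identity that underlies Lemma \ref{lem:normalization}. After the change of variables $t_i=e^{-(y_i+z_i)}$ for $1\le i\le m$ and $t_{m+1}=e^{-y+(z_1+\dots+z_m)}$, the constraints $z_i\ge 0$ and $\sum z_i\le y$ together with the hypotheses $y_1>\dots>y_m>y>0$ and $\min_i(y_i-y_{i+1})>y$ place the $t$'s precisely in the interlacing position relative to the fixed points $e^{-y_1},\dots,e^{-y_m}$ that Anderson's integral requires; the integral then converges absolutely and evaluates in closed form, producing the Gamma prefactor $\Gamma(N\theta)/(\Gamma((N-m)\theta)\Gamma(\theta)^m)$ and the powers $1-2\theta$, $m\theta$, $\theta$ visible in \eqref{eqn:pierijack}. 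The overall constant is then assembled via the Gauss multiplication formula for $\Gamma$.

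The main obstacle is the bookkeeping in the middle steps: obtaining the multivariate contour representation with the \emph{correct} interaction kernel $\Phi$, and then carrying out the contour surgery so that the accumulated residues reassemble into exactly the functions $G^{\trig}$ and $F^{\trig}$ of \eqref{defn:FGfnstrig}. Conceptually this is the Harish-Chandra / Gindikin--Karpelevich recursion carried out with the extra parameter $\theta$ and at the purely formal level of contour integrals, so no new idea is needed beyond the first step; but the computation is long, and one must keep careful track of which poles are crossed, and with what multiplicity, once $\theta$ is non-integral. A secondary, routine technical point is legitimizing every contour deformation and Fubini interchange: for this one keeps $y_1,\dots,y_m,y$ in the stated range and uses Stirling's bounds on the Gamma ratios along the tails of the contours, and it is convenient to first prove the identity for $\lambda$ with complex ``spectral parameters'' $\lambda_i+\theta(N-i)$ in a region where all convergence is manifest, then specialize to integral $\lambda$.
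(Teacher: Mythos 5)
The paper does not contain a proof of this statement: as the remark immediately following Theorem \ref{jackthm2} makes explicit, it is imported verbatim from \cite{Cu18b}, Theorem 3.1, up to the substitutions $x_i \mapsto e^{-y_i}$ and $w_i \mapsto e^{-z_i}$. So there is no in-paper argument to compare your attempt against.

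Evaluating the outline on its own terms, there is a genuine gap at its center, and a second confusion in the last paragraph. The first ingredient you ask for --- an $m$-fold contour representation of $J_\lambda(e^{-y_1},\dots,e^{-y_m};N;\theta)$ with a pair-interaction kernel built from Gamma ratios --- does exist (it is in \cite{Cu18b}), so that part is reasonable. But the decisive ``contour surgery'' that is supposed to convert the $(m+1)$-fold contour integral (product of the $m$-fold and the $1$-fold representations) into an integral over the compact simplex $\V_y\subset\R^m$ weighted by $G^{\trig}$, $(F^{\trig})^{\theta(N-m)-1}$ and $\prod_i(1-e^{-z_i})^{\theta-1}$ is asserted, not carried out. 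You do not explain how residues of the interaction kernel in a $(m+1)$-fold \emph{contour} integral (which are point evaluations) would produce an integration over $m$ new \emph{real} variables on a bounded simplex, nor why those contributions would reassemble into the $(m+1)$-variable representation of $J_\lambda$ evaluated at the shifted arguments $e^{-(y_i+z_i)}$, $e^{-y+(z_1+\cdots+z_m)}$. That is the entire content of the theorem and cannot be left as ``bookkeeping.'' Finally, the appeal to Anderson's integral is misplaced: you say that the residual integral over $\V_y$ ``evaluates in closed form'' and ``produces'' the prefactor $\Gamma(N\theta)/(\Gamma((N-m)\theta)\Gamma(\theta)^m)$, but the $\V_y$-integral in \eqref{eqn:pierijack} still contains $J_\lambda$ and is the \emph{output} of the theorem, not a quantity to be evaluated. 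At best, Anderson's integral gives a consistency check on the normalization (by setting $\lambda = 0$ so that $J_\lambda \equiv 1$), but it does not supply the derivation of the weights $G^{\trig}$, $F^{\trig}$ or the prefactor.
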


\begin{rem}
The three theorems above are mild reformulations of the corresponding theorems in \cite{Cu18b}: the arguments $x, x_i$ in that paper have been replaced by $e^{y}, e^{y_i}$ (or $e^{-y}, e^{-y_i}$), and also the change of variables $w_i = e^{-z_i}$ was needed to obtain the integral $(\ref{eqn:pierijack})$ (the corresponding theorem in \cite{Cu18b} involved variables $w_1, \ldots, w_m$).
\end{rem}

\subsection{Limit from Jack polynomials to Bessel functions}\label{subsection:bessel}

The multivariate Bessel functions are degenerations of Jack polynomials in the following limit regime.

\begin{thm}[\cite{OkOl}, Section 4]\label{thm:okol}
If $a = (a_1 > \dots > a_N)\in\PP_N$ and $x = (x_1, \dots, x_N) \in \C^N$, then
\begin{equation*}
\lim_{\epsilon \rightarrow 0^+} \frac{J_{\lfloor \epsilon^{-1}a \rfloor}(e^{\epsilon x}; \theta)}{J_{\lfloor \epsilon^{-1}a \rfloor}(1^N; \theta)} = \B_a(x; \theta),
\end{equation*}
where we denoted $\lfloor\epsilon^{-1}a \rfloor := (\lfloor \epsilon^{-1}a_1 \rfloor, \dots, \lfloor \epsilon^{-1}a_N \rfloor)$ and $e^{\epsilon x} := (e^{\epsilon x_1}, \dots, e^{\epsilon x_N})$.
The limit is uniform for $x$ belonging to compact subsets of $\C^N$.
\end{thm}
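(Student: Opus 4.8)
The plan is the classical three-step scheme for proving that a family of eigenfunctions converges to an eigenfunction. Throughout write $\lambda(\epsilon):=\lfloor\epsilon^{-1}a\rfloor$, which for $\epsilon>0$ small is a strictly decreasing integer vector (since $a_1>\cdots>a_N$) with $\epsilon\,\lambda(\epsilon)_i\to a_i$, and set
$$
g_\epsilon(x):=\frac{J_{\lambda(\epsilon)}(e^{\epsilon x_1},\dots,e^{\epsilon x_N};\theta)}{J_{\lambda(\epsilon)}(1^N;\theta)}\,,\qquad x\in\C^N,
$$
an entire symmetric function with $g_\epsilon(0,\dots,0)=1$. \textbf{Step 1 (normal family).} I would first check that $\{g_\epsilon\}_{0<\epsilon\le1}$ is precompact for uniform convergence on compact subsets of $\C^N$. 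For $\theta>0$ the Jack polynomial $J_\mu$ has a Laurent-monomial expansion with nonnegative coefficients, and $J_\mu(1^N;\theta)$ is the sum of those coefficients weighted by the positive orbit sizes $m_\nu(1^N)$ (\cite[VI.10]{M}); using the shift $J_{\mu+(1^N)}(z)=(z_1\cdots z_N)J_\mu(z)$ to reduce to $\mu$ a partition, every monomial $z^\alpha$ occurring then has $\alpha_i\ge0$, $\sum_i\alpha_i=|\mu|=O(\epsilon^{-1})$ (the shift also produces a prefactor $e^{\epsilon\lambda(\epsilon)_N(x_1+\cdots+x_N)}$, harmless since $\epsilon\,\lambda(\epsilon)_N$ is bounded). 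This gives $|g_\epsilon(x)|\le\exp(C_{a,N}R)$ whenever $\max_i|\mathrm{Re}\,x_i|\le R$, with $C_{a,N}$ independent of $\epsilon$; Montel's theorem, together with Cauchy's estimates for the derivatives, then yields the claim.

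\textbf{Step 2 (subsequential limits solve the hypergeometric system).} Let $g=\lim_{\epsilon_k\to0^+}g_{\epsilon_k}$ be any subsequential limit -- necessarily entire, symmetric, with $g(0)=1$. I would show $g$ solves the system $(\ref{eqn:hypersystem})$ via the standard degeneration of the trigonometric Calogero--Sutherland system to the rational Calogero--Moser one. The normalized Jack polynomials are joint eigenfunctions of the commuting family $\{L_r\}_{r=1}^N$ of quantum Calogero--Sutherland differential operators; the eigenvalue of $L_r$ on $g_\epsilon$ is a polynomial in $\lambda(\epsilon)$ of degree $r$ whose top-degree part is $p_r(\lambda(\epsilon))$, the lower-order corrections being controlled by $\theta$. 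Substituting $z_i=e^{\epsilon x_i}$ and multiplying by $\epsilon^r$ turns $L_r$ into a differential operator $L_r^{(\epsilon)}$ in the variables $x$ whose coefficients converge, uniformly on compact subsets of $\{x:\ x_i\ne x_j\ \text{ for }i\ne j\}$, to those of the symmetrized Dunkl operator $p_r(\xi_1,\dots,\xi_N)$, while $\epsilon^r\cdot(\text{eigenvalue})\to p_r(a)$ since $\epsilon\,\lambda(\epsilon)_i\to a_i$. Letting $\epsilon_k\to0^+$ (using Step 1 to pass the limit through the derivatives) yields $p_r(\xi_1,\dots,\xi_N)\,g=p_r(a)\,g$ on $\{x_i\ne x_j\}$, hence everywhere by holomorphy. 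Since the $p_r$ generate the symmetric polynomials and $P\mapsto P(\xi_1,\dots,\xi_N)$ is an algebra homomorphism (Theorem $\ref{commutativity}$), $g$ solves the full system $(\ref{eqn:hypersystem})$.

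\textbf{Step 3 (conclusion).} By the uniqueness in Theorem $\ref{thm:opdam}$, $g=\B_a(\,\cdot\,;\theta)$. Thus every subsequential limit of the normal family $\{g_\epsilon\}$ equals $\B_a(\,\cdot\,;\theta)$, so $g_\epsilon\to\B_a(\,\cdot\,;\theta)$ uniformly on compact subsets of $\C^N$, which is the assertion.

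The step I expect to be the main obstacle is Step 2: writing out the $r$-th Calogero--Sutherland operator explicitly in the variables $x$ after $z_i=e^{\epsilon x_i}$, and checking that after multiplication by $\epsilon^r$ each coefficient and the eigenvalue have exactly the asserted limits -- that no term of order $\epsilon^{-r+1}$ or lower persists, that no part of $p_r(\xi_1,\dots,\xi_N)$ is lost, and that the apparent poles along the diagonals $x_i=x_j$ cancel. An alternative that avoids the operator analysis is to take the limit directly in Macdonald's combinatorial (branching) formula \cite[VI.10.10--VI.10.11]{M}: under $\lambda=\lambda(\epsilon)$, $z_i=e^{\epsilon x_i}$, the finite sum over integer Gelfand--Tsetlin patterns becomes a Riemann sum for the integral over real interlacing arrays in Proposition $\ref{prop:combinatorial}$, the monomial weights converge to $\exp\!\bigl(\sum_k x_k(|a^{(k)}|-|a^{(k-1)}|)\bigr)$, and the branching coefficients -- once the powers of $\epsilon$ cancel against the normalization $J_{\lambda(\epsilon)}(1^N;\theta)$ -- converge to the orbital-beta density of $(\ref{def:densityOBP})$; the uniform bound from Step 1 justifies interchanging the limit with the integral, and the outcome is exactly the integral representation of $\B_a(x;\theta)$ in Proposition $\ref{prop:combinatorial}$.
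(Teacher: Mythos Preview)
The paper does not give a proof of this theorem: it is stated with the citation \cite{OkOl} and used as a black box in Section~\ref{sec:besselformulas}. So there is no proof in the paper to compare your proposal against.

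That said, your three-step scheme is a sound and standard way to establish such a limit, and nothing in it is wrong. Step~1 is fine as written. Step~2 is indeed the crux, and your self-diagnosis is accurate: the honest verification that $\epsilon^r L_r$ converges coefficientwise to $p_r(\xi_1,\dots,\xi_N)$, with no stray lower-order terms surviving, is a real (if routine) computation; the alternative via the combinatorial branching formula that you sketch at the end is cleaner and would mesh directly with Proposition~\ref{prop:combinatorial}. Step~3 is immediate from Theorem~\ref{thm:opdam}.

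For comparison, the argument in \cite{OkOl} follows neither of your two routes. Okounkov and Olshanski expand $J_\lambda(e^{\epsilon x};\theta)/J_\lambda(1^N;\theta)$ via their \emph{binomial formula} as a series in shifted Jack polynomials $J^*_\mu(\lambda;\theta)$ times Jack polynomials $J_\mu(x;\theta)$, and then pass to the limit term by term: the shifted Jack polynomial $J^*_\mu(\lambda(\epsilon);\theta)$, multiplied by the appropriate power of $\epsilon$, converges to the ordinary Jack polynomial $J_\mu(a;\theta)$, and the resulting series is precisely the known expansion of $\B_a(x;\theta)$ in Jack polynomials. This bypasses both the operator analysis and the Riemann-sum argument, at the cost of importing the shifted-Jack machinery. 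Your approaches are more self-contained relative to what is already set up in the present paper; the \cite{OkOl} approach is shorter once one has the binomial formula in hand.
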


\subsection{Proofs of Theorems $\ref{besselthm2}$--$\ref{besselthm3}$}

The proofs of Theorems $\ref{besselthm2}$ and $\ref{besselthm1}$ are almost identical, therefore we shall omit the proof of Theorem $\ref{besselthm2}$, but give the proof of Theorem $\ref{besselthm1}$.

\begin{proof}[Proof of Theorem $\ref{besselthm1}$]

In the formula of Theorem $\ref{jackthm1}$, set $\lambda_i = \lfloor \epsilon^{-1}a_i \rfloor$, $i = 1, 2, \ldots, N$, $y \mapsto \epsilon y$, and make the change of variables $z \mapsto \epsilon^{-1}z$; the result is:
\begin{equation}\label{formulaeps}
J_{\lambda}\left(e^{-\epsilon y}; N; \theta\right) =
-\frac{\Gamma(\theta N)}{(1 - e^{-\epsilon y})^{\theta N - 1}}\frac{\epsilon^{-1}}{2\pi\ii}\int^{+\infty}_{(0^-)}{\exp(-yz)\prod_{i=1}^N{\frac{\Gamma(\lfloor \epsilon^{-1}a_i \rfloor + \theta(N-i) - \epsilon^{-1}z)}{\Gamma(\lfloor \epsilon^{-1}a_i \rfloor + \theta(N-i+1) - \epsilon^{-1}z)}}dz}.
\end{equation}
The contour in the integral above is of the same form as the desired one in Theorem $\ref{besselthm1}$.
The only important point is that the contour does not depend on $\epsilon$, as we want to take limits when this variable tends to zero.

A well-known result about Gamma functions is the uniform limit
$$\frac{\Gamma(x+a)}{\Gamma(x + b)} = x^{a - b} (1 + O(1/x)), \ |x| \rightarrow \infty,$$
for $x$ avoiding the points $-a, -a-1, \dots$ and $-b, -b-1, \dots$, e.g. see \cite{TE}.
As a result, for any $i = 1, 2, \ldots, N$, we have the limit
\begin{equation}\label{limitgamma1}
\lim_{\epsilon \rightarrow 0^+}{ \epsilon^{-\theta}
\frac{\Gamma(\lfloor \epsilon^{-1}a_i \rfloor + \theta(N-i) - \epsilon^{-1}z)}{\Gamma(\lfloor \epsilon^{-1}a_i \rfloor + \theta(N-i+1) - \epsilon^{-1}z)} } = (a_i - z)^{-\theta}.
\end{equation}
Moreover,
$$\lim_{\epsilon \rightarrow 0^+}{ \frac{\epsilon^{\theta N - 1}}{(1 - e^{-\epsilon y})^{\theta N - 1}} } = \frac{1}{y^{\theta N - 1}}.$$
The previous limits show that the integrand in the right hand side of $(\ref{formulaeps})$ converges to the integrand in the right hand side of $(\ref{besselthm1eqn})$, as $\epsilon \rightarrow 0^+$.
Moreover, Theorem $\ref{thm:okol}$ shows also that the left hand side of $(\ref{formulaeps})$ converges to the left hand side of $(\ref{besselthm1eqn})$, as $\epsilon \rightarrow 0^+$.
Therefore it remains to apply the dominated convergence theorem and show that the integrands in $(\ref{formulaeps})$ are uniformly bounded when $\epsilon$ is a small positive real number, and when $z$ is in the contour with $|z|$  large.
But this is a consequence of the fact that $\exp(-yz)$ is uniformly bounded for $z$ in the contour, that the limit $(\ref{limitgamma1})$ is uniform for $z$ in the contour, and that the modulus of
$$\prod_{i=1}^N{ (a_i - z)^{-\theta} }$$
goes to zero as $|z|$ tends to infinity.
\end{proof}

In the proof below, we use bold letters to denote tuples: $\bfy := (y_1, \ldots, y_m, y)$, $\bfz := (z_1, \ldots, z_m)$, $\epsilon\bfy := (\epsilon y_1, \ldots, \epsilon y_m, \epsilon y)$, etc.

\begin{proof}[Proof of Theorem $\ref{besselthm3}$]
In the formula of Theorem $\ref{jackthm2}$, set $\lambda_i = \lfloor \epsilon^{-1}a_i \rfloor$, $i = 1, \ldots, N$; $y_j \mapsto \epsilon y_j$, $j = 1, \ldots, m$; $y \mapsto \epsilon y$; finally make the change of variables $z_k \mapsto \epsilon z_k$, $k = 1, 2, \ldots, m$:
\begin{multline}\label{eqn:pieri2}
J_{\lfloor \epsilon^{-1}a \rfloor}(e^{-\epsilon y_1}, \ldots, e^{-\epsilon y_m}; N; \theta)
J_{\lfloor \epsilon^{-1}a \rfloor}(e^{-\epsilon y}; N; \theta) =\\
\frac{\Gamma(N\theta)}{\Gamma((N-m)\theta)\Gamma(\theta)^{m}}
\frac{\prod_{1\leq i < j\leq m}{(1 - e^{-\epsilon(y_i - y_j)})^{1 - 2\theta}}}{e^{\epsilon ym\theta}(1 - e^{-\epsilon y})^{m\theta}\prod_{i=1}^m{(1 - e^{-\epsilon y_i})^{\theta}}}\\
\times \int\dots\int { G^{\trig}_{\theta, \epsilon \bfy}(\epsilon \bfz) F^{\trig}_{\epsilon \bfy}(\epsilon \bfz)^{\theta(N-m)-1} }\\
J_{\lfloor \epsilon^{-1}a \rfloor}\left(e^{- \epsilon(y_1 + z_1)}, \ldots, e^{- \epsilon(y_m + z_m)}, e^{-\epsilon(y-(z_1 + \dots + z_m))}; N; \theta\right) \prod_{i=1}^m{\left( (1 - e^{-\epsilon z_i})^{\theta-1} \epsilon dz_i \right)},
\end{multline}
The resulting domain of integration $\V_y$ remains exactly the same after the re-scaling of variables $y_j$'s and $z_k$'s; in particular, it does not depend on $\epsilon$.
We only have to take the limit of the resulting equation as $\epsilon \rightarrow 0^+$.

Theorem $\ref{thm:okol}$ shows that the limit of the left hand side of $(\ref{eqn:pieri2})$, as $\epsilon \rightarrow 0^+$, equals
$$\B_a(-y_1, \ldots, -y_m; N; \theta)\B_a(-y; N; \theta).$$

Next we look at the right hand side.
We frequently use (without further mention):
$$1 - e^{-\epsilon A} = \epsilon A (1 + O (\epsilon)), \ e^{\epsilon B} = 1 + O(\epsilon),$$
for any fixed $A, B\in\C$.
The second line of $(\ref{eqn:pieri2})$ equals
$$\frac{\Gamma(N\theta)}{\Gamma((N-m)\theta)\Gamma(\theta)^{m}}
\cdot\epsilon^{{m \choose 2}(1 - 2\theta) - 2m\theta}
\cdot\frac{\prod_{1 \leq i < j \leq m}(y_i - y_j)^{1 - 2\theta}}{y^{m\theta}(y_1\cdots y_m)^{\theta}}(1+O(\epsilon)).$$

Now from $(\ref{defn:FGfns})$ and $(\ref{defn:FGfnstrig})$, we obtain
\begin{align*}
G^{\textrm{trig}}_{\theta, \epsilon\bfy}(\epsilon\bfz) &= \epsilon^{{m \choose 2}(\theta-1) + {m+1 \choose 2}\theta}\cdot G_{\theta, \bfy}(\bfz) (1 + O(\epsilon));\\
F^{\textrm{trig}}_{\epsilon\bfy}(\epsilon\bfz) &= F_{\bfy}(\bfz)(1 + O(\epsilon)).
\end{align*}
The third line of $(\ref{eqn:pieri2})$ becomes
$$\epsilon^{{m \choose 2}(\theta-1) + {m+1 \choose 2}\theta} \cdot G_{\theta, \bfy}(\bfz)F_{\bfy}(\bfz)(1 + O(\epsilon)).$$
Finally, from Theorem $\ref{thm:okol}$, the fourth line of $(\ref{eqn:pieri2})$ is equal to
$$
\B_a(-(y_1+z_1), \ldots, -(y_m+z_m), -(y-z_1-\cdots-z_m); N; \theta)
\cdot \epsilon^{m\theta} \cdot \prod_{i=1}^m{z_i^{\theta - 1}} (1 + o(1)).
$$
Combining the estimates above, we obtain Theorem $\ref{besselthm3}$.
\end{proof}

\section{Proof of the asymptotic theorem in Section $\ref{sec:formulas}$}\label{sec:asymptotics}

In this section, we prove Theorem $\ref{thm:asymptoticbessels}$.

\subsection{A preliminary lemma}

\begin{lem}\label{lem:meanzero}
If Theorem $\ref{thm:asymptoticbessels}$ holds in the special case that the regular sequence $\{ a(N) \}_{N \geq 1}$ satisfies $\sum_{i=1}^N{a(N)_i} = 0$ for all $N\in\N_+$, then it also holds in full generality.
\end{lem}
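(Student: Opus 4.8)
The plan is to reduce everything to the centered case by exploiting the shift-invariance of multivariate Bessel functions recorded in Corollary~\ref{cor:additionlabel}. Given a regular sequence $\{a(N)\in\PP_N\}_{N\geq 1}$ with limiting measure $\mu$, I would set
$$c_N := \frac{1}{N}\sum_{i=1}^N a(N)_i = N\,\E[\mu_N], \qquad \tilde a(N)_i := a(N)_i - c_N \quad (1\leq i\leq N),$$
so that $\tilde a(N)\in\PP_N$ (subtracting a constant preserves the strict ordering) and $\sum_{i=1}^N \tilde a(N)_i = 0$ for every $N$. First I would verify that $\{\tilde a(N)\}$ is again regular. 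Condition~(a) of Definition~\ref{def:regularsignatures} forces the supports of all $\mu_N$ into one fixed compact interval $[-C,C]$, whence $|c_N|\leq C N$; this gives condition~(a) for $\{\tilde a(N)\}$, and, together with $\mu_N\to\mu$ weakly, also yields $\E[\mu_N]\to\E[\mu]$ (integrate a bounded continuous function agreeing with $t\mapsto t$ on $[-C,C]$). Writing $\tilde\mu_N := \frac1N\sum_i \delta_{\tilde a(N)_i/N}$ for the empirical measure of $\{\tilde a(N)_i/N\}$, which is the push-forward of $\mu_N$ under $t\mapsto t-\E[\mu_N]$, a short uniform-continuity argument (using once more that all measures live in a fixed compact set) shows $\tilde\mu_N\to\tilde\mu$ weakly, where $\tilde\mu$ is the push-forward of $\mu$ under $t\mapsto t-\E[\mu]$. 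Thus $\{\tilde a(N)\}$ is regular with limiting measure $\tilde\mu$; by shift-invariance of the variance $\Var[\tilde\mu]=\Var[\mu]$, and $\E[\tilde\mu_N]=\frac1{N^2}\sum_i\tilde a(N)_i=0$.

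Next, invoking the hypothesis — that Theorem~\ref{thm:asymptoticbessels} holds for centered regular sequences — applied to $\{\tilde a(N)\}$ and using $\E[\tilde\mu_N]=0$, one gets
$$\lim_{N\to\infty}\B_{\tilde a(N)}\!\left(\frac{y_1}{\sqrt N},\dots,\frac{y_m}{\sqrt N};N;\theta\right) = \exp\!\left(\frac{\Var[\mu]}{2\theta}\sum_{i=1}^m y_i^2\right),$$
uniformly for $(y_1,\dots,y_m)$ in compact subsets of $\C^m$. To transfer this to $\B_{a(N)}$, I would apply Corollary~\ref{cor:additionlabel} with shift $r=c_N$ and arguments $(y_1/\sqrt N,\dots,y_m/\sqrt N,0^{N-m})$, whose total sum is $\tfrac{1}{\sqrt N}\sum_{i=1}^m y_i$; since $c_N/\sqrt N=\sqrt N\,\E[\mu_N]$ this gives, identically in $N$,
$$\B_{a(N)}\!\left(\frac{y_1}{\sqrt N},\dots,\frac{y_m}{\sqrt N};N;\theta\right)\exp\!\left(-\sqrt N\,\E[\mu_N]\sum_{i=1}^m y_i\right) = \B_{\tilde a(N)}\!\left(\frac{y_1}{\sqrt N},\dots,\frac{y_m}{\sqrt N};N;\theta\right).$$
Letting $N\to\infty$ and using the previous display then yields precisely the conclusion of Theorem~\ref{thm:asymptoticbessels} for $\{a(N)\}$, with the same uniformity, since the prefactor was cancelled exactly for every finite $N$.

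The only step requiring genuine (if minor) care is checking that the recentered sequence $\{\tilde a(N)\}$ is regular with limiting measure the recentering of $\mu$; the subtlety is that the shift $\E[\mu_N]$ depends on $N$, which is what makes the weak convergence $\tilde\mu_N\to\tilde\mu$ need a small argument, resolved by the uniform boundedness of supports granted by Definition~\ref{def:regularsignatures}(a). Everything else is a direct manipulation with Corollary~\ref{cor:additionlabel}, which is valid for arbitrary complex shifts and therefore preserves the stated uniform convergence on compacts.
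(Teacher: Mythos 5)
Your proof is correct and follows essentially the same route as the paper: recenter via $\tilde a(N)_i = a(N)_i - N\E[\mu_N]$, check that the recentered sequence is regular with limiting measure the translate of $\mu$ (so the variance is unchanged and the new empirical mean vanishes), apply the centered case, and transfer back using the shift identity of Corollary~\ref{cor:additionlabel}. The only difference is that you spell out the verification of regularity for the recentered sequence in more detail than the paper, which simply asserts it.
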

\begin{proof}
Let $m\in\N_+$ and $\{a(N)\in\PP_N\}_{N \geq 1}$ be a regular sequence of ordered tuples with a limiting meaure $\mu$, satisfying the conditions in the statement of Theorem $\ref{thm:asymptoticbessels}$.
Let
$$r_N := \frac{a(N)_1 + \dots + a(N)_N}{N}, \quad N \geq 1.$$
If we let $\mu_N := \frac{1}{N}\sum_{i=1}^N{\delta_{a(N)_i/N}}$, observe that $r_N = N\E[\mu_N]$.
Since $\mu_N \to \mu$ weakly, in particular we have $r_N/N \to \E[\mu]$, as $N \to \infty$.

Define
$$b(N) := (a(N)_1 - r_N, \ldots, a(N)_N - r_N) \in \PP_N,\quad N \geq 1.$$
It is clear that $\{ b(N) \}_{N \geq 1}$ is a regular sequence of ordered tuples such that $\sum_{i=1}^N{b(N)_i} = 0$; moreover, one can verify that its limiting measure $\nu$ is the pushforward of $\mu$ under the map $x \mapsto x - \E[\mu]$ (this uses the limit $r_N/N \to \E[\mu]$).
The first two moments of $\nu$ are related to those of $\mu$ by
$$
\E[\nu] = 0, \quad \Var[\nu] = \Var[\mu].
$$
By the assumption, we have the limit
\begin{equation}\label{simplification2}
\lim_{N \rightarrow \infty}{\B_{b(N)}\left( \frac{y_1}{\sqrt{N}}, \ldots, \frac{y_m}{\sqrt{N}}; N; \theta \right)} =
\exp\left( \frac{\Var[\nu]}{2\theta} \sum_{i=1}^m{y_i^2} \right) =
\exp\left( \frac{\Var[\mu]}{2\theta} \sum_{i=1}^m{y_i^2} \right).
\end{equation}
From Corollary $\ref{cor:additionlabel}$, it follows that
\begin{align}
\B_{b(N)}\left( \frac{y_1}{\sqrt{N}}, \ldots, \frac{y_m}{\sqrt{N}}; N; \theta \right)
&= \B_{a(N)}\left( \frac{y_1}{\sqrt{N}}, \ldots, \frac{y_m}{\sqrt{N}}; N; \theta \right)
\exp\left( - \frac{r_N}{\sqrt{N}} \sum_{i=1}^m{y_i} \right)\nonumber\\
&= \B_{a(N)}\left( \frac{y_1}{\sqrt{N}}, \ldots, \frac{y_m}{\sqrt{N}}; N; \theta \right)
\exp\left( - \sqrt{N}\cdot\E[\mu_N] \sum_{i=1}^m{y_i} \right)\label{simplification3}
\end{align}
The general conclusion of Theorem $\ref{thm:asymptoticbessels}$ then follows from $(\ref{simplification2})$ and $(\ref{simplification3})$.
\end{proof}

\subsection{Proof of Theorem \ref{thm:asymptoticbessels} for $m = 1$}\label{sec:m1}

Let $\{ a(N)\in\PP_N \}_{N \geq 1}$ be a regular sequence of ordered tuples with limiting measure $\mu$ that satisfies the conditions in the statement of Theorem $\ref{thm:asymptoticbessels}$.
Because of Lemma $\ref{lem:meanzero}$, we do not lose any generality by assuming $\E[\mu_N] = \sum_{i=1}^N{a(N)_i}/N^2 = 0$, for all $N\in\N_+$, so let us assume this. Since $\mu_N \to \mu$ weakly, then also $\E[\mu] = 0$.

In this subsection, we finish the proof of Theorem \ref{thm:asymptoticbessels} by showing
\begin{equation}\label{limit1}
\lim_{N\rightarrow\infty}
\B_{a(N)}\left( y/\sqrt{N}; N; \theta \right)
= \exp\left( \frac{y^2 \Var[\mu]}{2\theta} \right)
\end{equation}
uniformly for $y$ belonging to compact subsets of $\C$.
We proceed in several steps.

\medskip

\textit{Step 1.} 
Let us assume that $(\ref{limit1})$ holds uniformly for $y$ belonging to compact subsets of $\R^* = \R\setminus\{0\}$, and moreover that the sequence
\begin{equation}\label{bddsequence}
\left\{\B_{a(N)}\left( y/\sqrt{N}; N; \theta \right) \right\}_{N \geq 1}
\end{equation}
is uniformly bounded in a real neighborhood of zero.
Under these assumptions, we claim that $(\ref{limit1})$ holds uniformly for $y$ belonging to compact subsets of $\C$.

From Corollary $\ref{cor:bounded2}$, we have
\begin{equation}\label{ineq1}
\left|  \B_{a(N)}\left( y/\sqrt{N}; N; \theta \right) \right|
\leq \B_{a(N)}\left( \Re y/\sqrt{N}; N; \theta \right).
\end{equation}
Our assumptions imply that the right hand side of $(\ref{ineq1})$ is uniformly bounded for $y$ on compact subsets of $\C$, and therefore so is the sequence $(\ref{bddsequence})$.
As a consequence of Montel's theorem, any subsequence of $(\ref{bddsequence})$ has a sub-subsequence which converges uniformly (on compact subsets of $\C$) to an entire function.
But each such entire function must coincide with $\exp(y^2/(2\theta \Var[\mu]))$ on $\R^*$ because of the assumption.
Since $\exp(y^2/(2\theta \Var[\mu]))$ is an entire function, analytic continuation shows that all subsequential limits of $(\ref{bddsequence})$ coincide and are equal to $\exp(y^2/(2\theta \Var[\mu]))$.
Our claim is proved.

\smallskip

\textit{Thus the problem is reduced to proving $(\ref{limit1})$ uniformly for $y$ on compact subsets of $\R^*$ and the uniform boundedness of $(\ref{bddsequence})$ in a real neighborhood of the origin.} They are proved in the next steps: Steps 2--7 show the uniform limit $(\ref{limit1})$ for $y$ on compact subsets of $\R^*$, while Step 8 shows the uniform boundedness of $(\ref{bddsequence})$ near the origin.

\smallskip

Before moving on, let us fix a large enough interval $J = [-s, s] \subset \R$, $s > 0$, such that
\begin{equation*}
\text{supp}{(\mu)} \subseteq J\quad \text{ and }\quad\{ a(N)_i/N : 1\leq i \leq N, \ N\in\N_+ \} \subseteq J.
\end{equation*}

\smallskip

\textit{Step 2.}
With the integral representation in Theorem $\ref{besselthm2}$ and the method of steepest descent, we will prove the desired limit $(\ref{limit1})$ uniformly for $y$ belonging to compact subsets of $(0, \infty)$.
A similar analysis, by using instead Theorem $\ref{besselthm1}$, shows that $(\ref{limit1})$ also holds uniformly for $y$ belonging to compact subsets of $(-\infty, 0)$.
We only prove $(\ref{limit1})$ uniformly for $y$ belonging to compact subsets of $(0, \infty)$, as the case of $(-\infty, 0)$ is very similar.

Let $y > 0$ be a real variable and $N$ be large enough so that $N\theta > 1$.
In this situation, the contour in Theorem $\ref{besselthm2}$ can be deformed to a vertical line to the right of the poles (and directed upwards), by virtue of Cauchy's theorem.
The key point is that the exponential $\exp(yz)$ has a bounded modulus, so the integrand in $(\ref{besselthm2eqn})$ grows as $z^{-N\theta}$, when $|z|$ is large.
After the change of variables $y \mapsto y/\sqrt{N}$ and $z \mapsto Nz$, we can write
\begin{equation}\label{eqn:besselproof1}
\B_{a(N)}\left(\frac{y}{\sqrt{N}}; N; \theta\right)
= \frac{\Gamma(\theta N) \sqrt{N}^{1-\theta N}}{y^{\theta N - 1}\cdot 2\pi\ii}
\int^{x_0 + \ii \infty}_{x_0 - \ii \infty}
\exp\left( N H_N(z; \mu, \theta) \right)
\phi_N(z; a(N); \theta)dz,
\end{equation}
where
\begin{equation}\label{Hwphi}
\begin{gathered}
H_N(z; \mu, \theta) := \frac{yz}{\sqrt{N}} - w(z),\quad\quad w(z) := \theta\int_{\R}{\ln{(z - t)}\mu(dt)},\\
\phi_N(z; a(N); \theta) := e^{Nw(z)}
\prod_{i=1}^N{\left( z - \frac{a(N)_i}{N} \right)^{- \theta}}.
\end{gathered}
\end{equation}
The point $x_0\in\R$ is arbitrary, as long as it is far enough to the right; it suffices to have $x_0 > s$.
The logarithms (in $w(z)$ and the one needed for $x^{-\theta}$ in $\phi(z; a(N); \theta)$) are defined on $\C\setminus (-\infty, 0]$. This choice poses no problem because $\inf_{t\in\text{supp}(\mu)}{\Re(z - t)} > \Re z - \sup_{x\in \text{supp}(\mu)}{x} \geq \Re z - \sup_{x\in J}{x} = x_0 - \sup_{x\in J}{x} > 0$, for all $z$ in our contour.
Similarly, $\Re(z - a(N)_i/N) > 0$, for $i = 1, \ldots, N$, $N \in \N_+$.

In all that follows, we will assume $N > 1/\theta$ so that we can use $(\ref{eqn:besselproof1})$.

\smallskip

\textit{Step 3.}
Here we prove
\begin{equation}\label{boundPhi}
|\ln{\phi_N(z; a(N), \theta)}| = o(1), \quad N \to \infty;
\end{equation}
the bound is uniform over $z\in\C$ such that $\Re z \geq c \sqrt{N}$, for any fixed constant $c>0$.

Recall $\mu_N = \frac{1}{N}\sum_{i=1}^N{\delta_{a(N)_i/N}}$.
From $(\ref{Hwphi})$, we need to bound the absolute value of
\begin{align*}
\ln{\phi_N(z; a(N), \theta)} = Nw(z) - \theta \sum_{i=1}^N{\ln\left( z - \frac{a(N)_i}{N} \right)}
&= N\theta \left\{ \int_{\R}{\ln{(z - t)}\mu(dt)} - \int_{\R}{\ln{(z - t)}\mu_N(dt)} \right\}\\
&= N\theta \left\{ \int_{\R}{\ln{\left( 1 - \frac{t}{z} \right)}\mu(dt)} - \int_{\R}{\ln{\left( 1 - \frac{t}{z} \right)}\mu_N(dt)} \right\}\\
&= N\theta \left\{ \frac{m_2(\mu_N) - m_2(\mu)}{2z^2} + \sum_{k=3}^{\infty}{\frac{m_k(\mu_N) - m_k(\mu)}{kz^k}} \right\}.
\end{align*}
Above we denoted the $k$-th moment of a measure $\nu$ by $m_k(\nu)$.
We also used our assumption that $m_1(\mu_N) = m_1(\mu) = 0$.

By condition, all measures $\mu, \mu_N$ ($N\in\N_+$) are supported on $J = [-s, s]$, and $\mu_N \rightarrow \mu$ weakly as $N \rightarrow \infty$.
The former fact implies $|m_k(\mu_N) - m_k(\mu)| \leq 2s^k$, for any $k \geq 1$.
Therefore, for $\Re z > c\sqrt{N}$ (so that also $|z| > c\sqrt{N}$) and $N$ large enough so that $c\sqrt{N} > 2s$, we have
\begin{multline}\label{diff_needbound}
\left| \ln{\phi_N(z; a(N), \theta)} \right| \leq
N\theta \left\{ \frac{|m_2(\mu_N) - m_2(\mu)|}{2|z|^2} + \sum_{k=3}^{\infty}{\frac{2s^k}{k|z|^k}} \right\}\\
\leq N\theta \left\{ \frac{|m_2(\mu_N) - m_2(\mu)|}{2|z|^2} + \sum_{k=3}^{\infty}{\left( \frac{s}{|z|} \right)^k} \right\}
\leq N\theta \left\{ \frac{|m_2(\mu_N) - m_2(\mu)|}{2|z|^2} + \frac{s^3}{|z|^2(|z| - s)} \right\}\\
\leq N\theta \left\{ \frac{|m_2(\mu_N) - m_2(\mu)|}{2c^2N} + \frac{2s^3}{c^3 N^{3/2}} \right\}
= \frac{\theta |m_2(\mu_N) - m_2(\mu)|}{2c^2} + \frac{2\theta s^3}{c^3\sqrt{N}}.
\end{multline}
Since $\mu_N \to \mu$ weakly, then $|m_2(\mu_N) - m_2(\mu)| \to 0$ as $N \rightarrow \infty$.
Then \eqref{boundPhi} follows from \eqref{diff_needbound}.

\smallskip

\textit{Step 4.}
We next study the asymptotics of the integral in $(\ref{eqn:besselproof1})$ for a specific choice of $x_0$: we let $x_0$ be the critical point of $H_N(z; \mu, \theta)$ that we will denote by $z_0$ henceforth. The critical point is the solution to $H_N'(z; \mu, \theta) = 0$, i.e., it satisfies:
\begin{equation}\label{eqn:critical}
\frac{y}{\theta\sqrt{N}} = \int_{\R}{\frac{\mu(dt)}{z_0 - t}}.
\end{equation}
We claim that there is a unique real solution $z_0$ such that $z_0 > \sup_{x\in\text{supp}(\mu)}{x}$ for the equation $(\ref{eqn:critical})$.
When $z_0$ is very close to $a_+ := \sup_{x\in\text{supp}(\mu)}{x}$, but larger than it, the right hand side of \eqref{eqn:critical} is close to $+\infty$.
Also, the right hand side of \eqref{eqn:critical} monotonically decreases as $z_0$ increases, and it tends to zero when $z_0$ tends to $+\infty$.
This argument proves the claim that there is a unique real solution $z_0$ to $(\ref{eqn:critical})$ such that $z_0 > a_+$.
After expanding the right hand side of $(\ref{eqn:critical})$, we have:
\begin{equation}\label{expansion1}
\frac{y}{\theta\sqrt{N}} = \int_{\R} \frac{\mu(dt)}{z_0 - t} = \frac{1}{z_0} + \frac{m_2}{z_0^3} + \frac{m_3}{z_0^4} + \dots,
\end{equation}
where
$$m_k := \int_{\R}{t^k \mu(dt)}, \ k \geq 1;$$
in particular, we have
\begin{equation}\label{estimate1}
z_0 = \frac{\theta\sqrt{N}}{y} (1 + o(1)).
\end{equation}
The bound $o(1)$ is uniform over $y$ belonging to compact subsets of $(0, \infty)$.
Note that we used $m_1 = \E[\mu] = 0$.

\smallskip

\textit{Step 5.}
We estimate the integral in $(\ref{eqn:besselproof1})$ by reducing the contour to an $\epsilon$-neighborhood of $z_0$:
\begin{equation}\label{approx:1}
(\ref{eqn:besselproof1}) \approx \int^{z_0 + \ii \epsilon}_{z_0 - \ii \epsilon}
\exp\left( N H_N(z; \mu, \theta) \right)
\phi_N(z; a(N); \theta)dz.
\end{equation}
It is convenient to choose $\epsilon = N^{1/7}$, so $\epsilon$ grows to infinity with $N$.
Since $z_0$ is the same order as $N^{1/2}$, see $(\ref{estimate1})$, then the quantities $\ln{(z - t)}$, $t\in\text{supp}(\mu)$, and $\ln{(z - a(N)_i/N)}$, $i = 1, 2, \ldots, N$,  are defined for all $z$ in the complex $\epsilon$-neighborhood of $z_0$.

We want to prove that the relative error of this approximation is $o(1)$.
This is a consequence of the following facts that we will prove:
\begin{align*}
\left| \left( \int_{z_0 + \ii\epsilon}^{z_0 + \ii\infty} + \int_{z_0 - \ii\infty}^{z_0 - \ii\epsilon} \right){\exp(NH_N(z)) \phi_N(z) dz} \right|
&\leq \frac{C_1}{N} |\exp(NH_N(z_0))|,\\
\left|\int_{z_0 - \ii \epsilon}^{z_0 + \ii \epsilon}{\exp(NH_N(z))\phi_N(z) dz}\right| &\geq C_2|\exp(NH_N(z_0))|;
\end{align*}
above, $C_1, C_2 > 0$ are constants independent of $N$, and we denoted $H_N(z; \mu, \theta)$ and $\phi_N(z; a(N); \theta)$ simply by $H_N(z)$ and $\phi(z)$, respectively.
Observe that $\phi_N(z) = 1 + o(1)$ for all $z$ in the contour $\{\Re z = \Re z_0\}$ --- this is because of $(\ref{estimate1})$ and $(\ref{boundPhi})$.
As a result, one can check that the two previous inequalities would follow if we proved the estimate
\begin{align}
\left( \int_{z_0 + \ii\epsilon}^{z_0 + \ii\infty} + \int_{z_0 - \ii\infty}^{z_0 - \ii\epsilon} \right){ \left| \exp\{N (H_N(z) - H_N(z_0))\} \right| dz}
&\leq \frac{C_1}{N},\label{step3.2}
\end{align}
and also showed the existence of two constants $C_2', C_2'' > 0$ such that
\begin{align}
\left|\int_{z_0 - \ii \epsilon}^{z_0 + \ii \epsilon}{\exp\{N (H_N(z) - H_N(z_0))\} dz}\right| &\geq C_2',\label{step3.1}\\
\int_{z_0 - \ii \epsilon}^{z_0 + \ii \epsilon}{\left| \exp\{N (H_N(z) - H_N(z_0))\} \right| dz} &\leq C_2''.\label{step3.31}
\end{align}

First we prove $(\ref{step3.2})$.
From $(\ref{estimate1})$, $z_0$ is of the same order of magnitude as $\sqrt{N}$, so $2z_0^2 > (z_0 - t)^2$, for all $t\in\text{supp}(\mu)$, as long as $N$ is large enough.
Then we obtain for any $h\in\R$:
\begin{equation}\label{boundreal}
\Re H_N(z_0 + \ii h) - \Re H_N(z_0) = -\frac{\theta}{2}\int_{\R}{ \ln\left( 1 + \frac{h^2}{(z_0 - t)^2} \right) \mu(dt) }
\leq -\frac{\theta}{2} \ln \left( 1 + \frac{h^2}{2z_0^2} \right).
\end{equation}
It follows that
\begin{equation}\label{step3.3}
\textrm{LHS of }(\ref{step3.2}) \leq \int_{|h| \geq \epsilon} { \exp\left\{ -\frac{\theta N}{2} \ln \left( 1 + \frac{h^2}{2z_0^2} \right) \right\}  dh} = 2 \int_{\epsilon}^{\infty}{ \exp\left\{ -\frac{\theta N}{2} \ln \left( 1 + \frac{h^2}{2z_0^2} \right) \right\} dh}.
\end{equation}
We will break the last integral into two.
It is easy to verify that $\ln(1+x) \geq x/2$, for all $x\in[0, 2]$.
Then one of the integrals will be from $h = \epsilon$ to $h = 2z_0$; we can estimate:
\begin{equation}\label{step3.4}
\int_{\epsilon}^{2z_0}{ \exp\left\{ -\frac{\theta N}{2} \ln \left( 1 + \frac{h^2}{2z_0^2} \right) \right\} dh}
\leq \int_{\epsilon}^{2z_0}{ \exp\left\{ -\frac{\theta N h^2}{8z_0^2} \right\} dh}
\leq 2z_0 \exp\left\{ -\frac{\theta N \epsilon^2}{8z_0^2} \right\}
\end{equation}
and this is exponentially small as a function of $N$ because $z_0 = \frac{\theta\sqrt{N}}{y}(1 + o(1))$ and $\epsilon = N^{1/7}$.
The second integral will be from $h = 2z_0$ to $h = \infty$; we simply use $\ln(1 + x) \geq \ln{x}$, for $x \geq 0$, to get
\begin{equation}\label{step3.5}
\int_{2z_0}^{\infty}{ \exp\left\{ -\frac{\theta N}{2} \ln \left( 1 + \frac{h^2}{2z_0^2} \right) \right\} dh}
\leq \int_{2z_0}^{\infty}{ \left( \frac{h^2}{2z_0^2} \right)^{-\theta N/2} dh}
= \sqrt{2}z_0 \int_{\sqrt{2}}^{\infty}{ h^{-\theta N} dh}
= \frac{z_0 \sqrt{2}^{2-\theta N}}{\theta N - 1}
\end{equation}
and the latter is also exponentially small as a function of $N$.
Putting together $(\ref{step3.3})$, $(\ref{step3.4})$ and $(\ref{step3.5})$, we obtain $(\ref{step3.2})$; the constant $C_1 > 0$ can be any positive real number.

Next we show the existence of $C_2'' > 0$ satisfying \eqref{step3.31}.
From \eqref{boundreal}, the bound $z_0 = \frac{\theta\sqrt{N}}{y}(1 + o(1)) \leq \frac{2\theta\sqrt{N}}{y}$ for large $N$, and the same estimate used to obtain \eqref{step3.4} (namely $\ln(1+x) \geq x/2$), we deduce
$$
\text{LHS of \eqref{step3.31}} \leq \int_{-\epsilon}^{\epsilon}{\exp\left( -\frac{\theta N h^2}{8z_0^2} \right) dh}
\leq \int_{-\epsilon}^{\epsilon}{\exp\left( -\frac{y^2}{32\theta}\cdot h^2 \right)dh} \leq \int_{-\infty}^{\infty}{\exp\left( -\frac{y^2}{32\theta}\cdot h^2 \right)dh}.
$$
Hence we can set $C_2'' := \int_{-\infty}^{\infty}{e^{-kh^2}dh}$ with $k := y^2/(32\theta)$.

Finally we verify $(\ref{step3.1})$.
From Taylor's expansion, we have for any $h\in\R$ with $|h| \leq \epsilon$:
$$
H_N(z_0 + \ii h) - H_N(z_0) = \frac{h^2}{2}w''(z_0) + h^3\delta;\quad
|\delta| \leq \frac{1}{6}\sup_{z\in [z_0 - \ii \epsilon, z_0 + \ii \epsilon]}|w'''(z)|.
$$
The exact form of the function $w(z)$, and $(\ref{estimate1})$, give
$$
w''(z_0) = -\theta \int_{\R}{ \frac{\mu(dt)}{(z_0 - t)^2} } = -\frac{y^2}{\theta N}(1 + o(1)),\
\sup_{z\in[z_0-\ii\epsilon, z_0+\ii\epsilon]}{|w'''(z)|} \leq 2\theta\int_{\R}{\frac{\mu(dt)}{(z_0 - t)^3}}
\leq CN^{-3/2},
$$
for some constant $C>0$.
Therefore, whenever $|h| \leq \epsilon = N^{1/7}$, we have $|Nh^3\delta| \leq CN^{-1/14}$ and $e^{Nh^3\delta} = 1 + o(1)$.
As a result,
$$
\textrm{LHS of }(\ref{step3.1}) =
\left| \int_{-\epsilon}^{\epsilon}{\exp\left\{ -\frac{y^2h^2}{2\theta}(1 + o(1)) \right\} (1+o(1)) dh} \right|
\geq \frac{1}{2}\int_{-\epsilon}^{\epsilon}{ \exp\left\{ -\frac{y^2 h^2}{\theta} \right\} dh}
\geq \frac{1}{2}\int_{-1}^{1}{ e^{-y^2 h^2/\theta} dh}
$$
and this proves $(\ref{step3.1})$, as we can set $C_2' := \frac{1}{2}\int_{-1}^{1}{ e^{-y^2 h^2/\theta} dh}$.

\smallskip

\textit{Step 6.}
We proceed to estimate the formula $(\ref{approx:1})$ from the last step (recall $\epsilon = N^{1/7}$).
One can easily check that $w''(z_0) < 0$, so we can make the following change of variables:
$$u = \ii\sqrt{-w''(z_0)};\ \ z = z_0 - \frac{t}{u\sqrt{N}}, \ t\in [-\sqrt{N}|u|\epsilon, \sqrt{N}|u|\epsilon].$$
We also need the Taylor expansion in the complex $\epsilon$-neighborhood of $z_0$:
$$
H_N(z_0 + h) = H_N(z_0) - \frac{h^2}{2}w''(z_0) + h^3\delta;\ \ \
|\delta| \leq \frac{1}{6}\sup_{z\in [z_0, z_0 + h]}|w'''(z)|.
$$
Then the integral $(\ref{approx:1})$ can be estimated as follows:
\begin{align}
(\ref{approx:1}) &= \frac{\exp\left( N H_N(z_0) \right)}{-u\sqrt{N}}\int^{\sqrt{N}|u|\epsilon}_{-\sqrt{N}|u|\epsilon}
\exp\left( N \left( H_N\left(z_0 - \frac{t}{u\sqrt{N}}\right) - H_N(z_0) \right) \right)
\phi_N \left( z_0 - \frac{t}{u\sqrt{N}} \right)dt \nonumber\\
&= \frac{\exp\left( N H_N(z_0) \right)}{-u\sqrt{N}}\int^{\sqrt{N}|u|\epsilon}_{-\sqrt{N}|u|\epsilon}
\exp\left( -\frac{t^2}{2} + \frac{t^3 \widetilde{\delta}}{N^{1/2}} \right)
\phi_N \left( z_0 - \frac{t}{u\sqrt{N}} \right) dt \nonumber\\
&\approx \frac{\exp\left( N H_N(z_0) \right) }{-u\sqrt{N}} \int^{\sqrt{N}|u|\epsilon}_{-\sqrt{N}|u|\epsilon}
\exp\left( -\frac{t^2}{2} + \frac{t^3 \widetilde{\delta}}{N^{1/2}} \right) dt \label{est1}\\
&\approx \frac{\exp\left( N H_N(z_0) \right) }{-u\sqrt{N}} \int^{\sqrt{N}|u|\epsilon}_{-\sqrt{N}|u|\epsilon}
\exp\left( -\frac{t^2}{2} \right) dt \label{est2}\\
&\approx \frac{\exp\left( N H_N(z_0) \right) }{-u\sqrt{N}}
\int^{+\infty}_{-\infty} \exp\left( -\frac{t^2}{2} \right) dt
= \frac{\exp\left( N H_N(z_0) \right) \sqrt{2\pi}}{-u\sqrt{N}},\label{est3}
\end{align}
where above we denoted
$$\widetilde{\delta} := -\delta/u^3.$$

Plugging the last estimate into $(\ref{eqn:besselproof1})$, and by Stirling's approximation (which leaves a relative error of order $O(1/N)$), we obtain
\begin{equation}\label{approximation1}
\B_{a(N)} \left( \frac{y}{\sqrt{N}}; N; \theta \right) \approx
\frac{\theta^{\theta N - \frac{1}{2}} \sqrt{N}^{\theta N - 1} e^{-\theta N}\exp\left( N H_N(z_0) \right)}{y^{\theta N - 1}\sqrt{-w''(z_0)}}.
\end{equation}

In $(\ref{est1})$, we used Step 3 and $(\ref{estimate1})$, which show that $\phi_N(z_0 + h) = 1 + o(1)$, for all $|h| \leq \epsilon = N^{1/7}$.
The relative error in this approximation is $o(1)$.

For $(\ref{est2})$, using the estimate for $|\delta|$ in Step 5, we obtain for all $t\in\R$ with $|t| \leq \sqrt{N}|u|\epsilon$:
$$ \left| \frac{t^3\widetilde{\delta}}{N^{1/2}} \right| \leq \frac{N^{3/2}|u|^3\epsilon^3 |\widetilde{\delta}|}{N^{1/2}}
= N\epsilon^3 |\delta| \leq N\epsilon^3 (CN^{-3/2}) = CN^{-1/14},$$
where $C>0$ is a constant.
Therefore
$$\exp\left( \frac{t^3\widetilde{\delta}}{N^{1/2}} \right) = 1 + o(1),$$
which shows the relative error in the approximation $(\ref{est2})$ is $o(1)$.

Finally, Step 5 also shows that $|u| = |w''(z_0)|^{1/2}$ is of order $N^{-1/2}$.
Then $\sqrt{N}|u|\epsilon$ is of order $\epsilon = N^{1/7}$, and therefore it is clear that the estimate $(\ref{est3})$ also gives a relative error of order $o(1)$.

Consequently, the estimation $(\ref{approximation1})$ is off by a negligible relative error of $o(1)$.

\smallskip

\textit{Step 7.}
We still have to take the limit of the right hand side of $(\ref{approximation1})$, as $N \rightarrow \infty$, but this is a simple task.
We will use the approximation $\ln{(1+x)} = x - x^2/2 + O(x^3)$, when $x \rightarrow 0$, without further mention.
The definition of $w(z)$ and $(\ref{estimate1})$ give
\begin{equation}\label{estimate3}
-w''(z_0) = \frac{y^2}{\theta N} (1 + o(1)).
\end{equation}
Next we need to estimate $H_N(z_0)$; by replacing $(\ref{estimate1})$ into $(\ref{expansion1})$, we have
\begin{equation}\label{estimate2}
\frac{y}{\theta \sqrt{N}} = \frac{1}{z_0} +  \frac{m_2}{z_0^3} + O(N^{-2}).
\end{equation}
Therefore
\begin{equation}\label{estimate4}
\begin{gathered}
H_N(z_0) = \frac{yz_0}{\sqrt{N}} - w(z_0) = \left\{ \theta + \frac{\theta m_2}{z_0^2} + O(N^{-3/2}) \right\}
- \theta \ln{z_0} - \theta\int_{\R}{\ln\left(1 - \frac{t}{z_0} \right) \mu(dt)}\\
= \theta - \theta \ln{z_0} + \frac{3\theta m_2}{2z_0^2} + O(N^{-3/2}).
\end{gathered}
\end{equation}

Now we can replace $(\ref{estimate3})$, $(\ref{estimate2})$ and $(\ref{estimate4})$ into $(\ref{approximation1})$, which gives
\begin{multline}\label{estimate6}
\B_{a(N)} \left( \frac{y}{\sqrt{N}}; N; \theta \right) = \sqrt{N}^{\theta N} (\theta e^{-1} y^{-1})^{\theta N}\exp\left\{ N \left(  \theta - \theta \ln{z_0} + \frac{3\theta m_2}{2z_0^2}  \right) \right\}\times (1 + o(1))\\
= \sqrt{N}^{\theta N}\exp\left\{ N \left(  \theta\ln{\theta} - \theta \ln{(yz_0)} + \frac{3\theta m_2}{2z_0^2}  \right) \right\}\times (1 + o(1)).
\end{multline}
From $(\ref{estimate2})$, we have
$$yz_0 = \theta\sqrt{N}\left( 1 + \frac{m_2}{z_0^2} \right) \times (1 + O(N^{-3/2})),$$
then
$$\ln(yz_0) = \ln{\theta} + \frac{1}{2}\ln{N} + \frac{2m_2}{2z_0^2} + O(N^{-3/2}).$$
Back into $(\ref{estimate6})$, it follows that
\begin{equation}\label{estimate5}
\B_{a(N)}\left( \frac{y}{\sqrt{N}}; N; \theta \right) = \sqrt{N}^{\theta N} \exp\left\{ \theta N \left( -\frac{\ln{N}}{2} + \frac{m_2}{2z_0^2} \right) \right\} \times (1 + o(1)).
\end{equation}
Finally because of $(\ref{estimate2})$ and $(\ref{estimate1})$, we obtain
\begin{gather*}
\frac{m_2}{2z_0^2} = \frac{m_2}{2} \left( \frac{y^2}{\theta^2 N} \right) + O(N^{-3/2})
= \frac{m_2 y^2}{2\theta^2 N} + O(N^{-3/2})
\end{gather*}
and back into $(\ref{estimate5})$ gives
\begin{equation*}
\B_{a(N)}\left( \frac{y}{\sqrt{N}}; N; \theta \right) = \sqrt{N}^{\theta N} \exp\left\{ -\frac{\theta N\ln{N}}{2} + \frac{m_2 y^2}{2\theta} \right\}(1 + o(1)) = \exp\left\{ \frac{m_2 y^2}{2\theta} \right\}(1+o(1)).
\end{equation*}
Since $m_2 = \int_{\R}{t^2\mu(dt)} = \Var[\mu]$, the uniform limit $(\ref{limit1})$ is now proved for $y$ in compact subsets of $(0, \infty)$, concluding one of the tasks laid out in Step 1.

\smallskip

\textit{Step 8.}
It remains to complete the second task laid out in Step 1, which is to prove the uniform boundedness of $(\ref{bddsequence})$ in a real neighborhood of the origin.
Given $x, R\in\R$ with $|x| \leq R$ and $b\in\R$, we have the obvious inequalities
\begin{equation}\label{eqn:step8.1}
e^{xb} \leq e^{|xb|} \leq e^{R|b|} \leq e^{Rb} + e^{-Rb}.
\end{equation}
On the other hand, Proposition $\ref{prop:combinatorial}$ gives
\begin{equation}\label{eqn:step8.2}
\B_{a(N)}(x; N; \theta) = \EE_{\{a_i^{(k)}\}} \left[ e^{x a^{(1)}_1} \right],
\end{equation}
where the expectation is over the orbital beta process $\{ a^{(k)}_i \}_{1 \leq i \leq k \leq N-1}$ with top level $a(N)$; in particular, this implies that $\B_a(x; N; \theta)$ is a positive real when $x\in\R$.
Apply $(\ref{eqn:step8.1})$ to $b = a^{(1)}_1$; then applying $(\ref{eqn:step8.2})$ several times yields the bound
\begin{multline*}
\sup_{|x| \leq R}|{\B_{a(N)}(x; N; \theta)|} = \sup_{|x| \leq R}{\B_{a(N)}(x; N; \theta)} = \sup_{|x| \leq R}{\EE_{\{a_i^{(k)}\}} \left[ e^{x a^{(1)}_1} \right]}\\
\leq \EE_{\{a_i^{(k)}\}} \left[ e^{R a^{(1)}_1} \right] + \EE_{\{a_i^{(k)}\}} \left[ e^{-R a^{(1)}_1} \right]
= \B_{a(N)}(R; N; \theta) + \B_{a(N)}(-R; N; \theta).
\end{multline*}
But both sequences $\{ \B_{a(N)}(R; N; \theta) \}_{N\geq 1}$, $\{ \B_{a(N)}(-R; N; \theta) \}_{N\geq 1}$ are bounded because they both converge to $\exp(\Var[\mu]R^2/2\theta)$.
It follows that the sequence $(\ref{bddsequence})$ is uniformly bounded on the compact set $[-R, R]$, thus completing the second task left in Step 1.
We have finished the proof of Theorem $\ref{thm:asymptoticbessels}$ for $m = 1$. \hfill \qed

\subsection{Proof of Theorem $\ref{thm:asymptoticbessels}$ for general $m$}

\begin{lem}\label{lem:boundedness}
Let $K \subseteq \R^m$ be any compact set, and $\{a(N) \in \PP_N\}_{N \geq 1}$ be a regular sequence of ordered tuples and such that $\sum_{i=1}^N{a(N)_i} = 0$ for all $N\in\N_+$, then
\begin{equation*}
\sup_{N \geq m}\sup_{(y_1, \dots, y_m)\in K}{\left| \B_{a(N)}\left( \frac{y_1}{\sqrt{N}}, \dots, \frac{y_m}{\sqrt{N}}; N; \theta \right)  \right|} < \infty.
\end{equation*}
\end{lem}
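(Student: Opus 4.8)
The plan is to reduce the $m$-variable estimate to the $m=1$ case of Theorem \ref{thm:asymptoticbessels}, which is already proved (Section \ref{sec:m1}), via H\"older's inequality applied to the combinatorial representation of $\B$ in Proposition \ref{prop:combinatorial}. Since $K\subseteq\R^m$ is compact, fix $R>0$ with $K\subseteq[-R,R]^m$. Because $\sum_i a(N)_i=0$ forces $\E[\mu_N]=0$, the $m=1$ case of Theorem \ref{thm:asymptoticbessels} reads $\B_{a(N)}(t/\sqrt N;N;\theta)\to\exp(\Var[\mu]\,t^2/(2\theta))$ uniformly for $t$ in compact subsets of $\R$. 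Together with the positivity $\B_{a(N)}(s;N;\theta)>0$ for $s\in\R$ (immediate from Proposition \ref{prop:combinatorial}) and the fact that a uniformly convergent sequence of continuous functions is uniformly bounded on a compact interval, this yields a finite constant $C_R:=\sup_{N\ge1}\sup_{|t|\le mR}\B_{a(N)}(t/\sqrt N;N;\theta)$.

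Next, for $N\ge m$ and $y=(y_1,\dots,y_m)\in K$ I would write, using Proposition \ref{prop:combinatorial} with the last $N-m$ arguments set to $0$, the identity $\B_{a(N)}(y_1/\sqrt N,\dots,y_m/\sqrt N;N;\theta)=\EE\big[\prod_{k=1}^m\exp((y_k/\sqrt N)v_k)\big]$, where $v_k:=|a^{(k)}|-|a^{(k-1)}|$ and $\EE$ is taken over the orbital beta process with top level $a(N)$; the $v_k$ are bounded random variables, so every expectation below is finite and positive. Apply H\"older's inequality with all $m$ exponents equal to $m$ (so $\sum_{k=1}^m 1/m=1$) to get $\B_{a(N)}(y_1/\sqrt N,\dots,y_m/\sqrt N;N;\theta)\le\prod_{k=1}^m\big(\EE[\exp((my_k/\sqrt N)v_k)]\big)^{1/m}$.

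Finally I would identify each factor as a one-variable Bessel value: Proposition \ref{prop:combinatorial}, applied with all $y$-arguments zero except the $k$-th equal to $my_k/\sqrt N$, gives $\EE[\exp((my_k/\sqrt N)v_k)]=\B_{a(N)}(0^{k-1},my_k/\sqrt N,0^{N-k};\theta)$, and the symmetry of $\B_{a(N)}(\cdot\,;\theta)$ in its $N$ arguments (Theorem \ref{thm:opdam}) turns this into $\B_{a(N)}(my_k/\sqrt N;N;\theta)$. Since $|my_k|\le mR$, each factor is at most $C_R$, so the product is at most $C_R$ --- uniformly over $N\ge m$ and $y\in K$, and $|\B_{a(N)}(\cdots)|$ equals $\B_{a(N)}(\cdots)$ by positivity. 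That is the claim.

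The one place where care is needed is avoiding circularity in the reduction. The more naive route --- bound $\exp((y_k/\sqrt N)v_k)\le\exp((R/\sqrt N)v_k)+\exp(-(R/\sqrt N)v_k)$ and expand the resulting product over $k$ --- would produce $2^m$ values of the $m$-variable Bessel function at points $(\pm R/\sqrt N,\dots,\pm R/\sqrt N,0^{N-m})$, i.e.\ precisely objects of the type we are trying to bound. H\"older's inequality is exactly what lets one instead split into $m$ genuinely one-variable expectations and quote the $m=1$ theorem; the remaining ingredients (boundedness of the $v_k$, positivity and $y$-symmetry of $\B$, uniform convergence implying uniform boundedness) are routine.
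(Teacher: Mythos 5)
Your proof is correct, and it takes a genuinely different route from the paper's. The paper's proof (Step 1 of its argument) uses the elementary chain
\[
\exp\Bigl(\sum_i a_i b_i\Bigr) \le \exp\Bigl(\sum_i \max(a_i,-a_i)\max_r|b_r|\Bigr) \le \sum_{\epsilon\in\{\pm1\}^m}\sum_{r=1}^m \exp\Bigl(\sum_i \epsilon_i a_i\, b_r\Bigr),
\]
whose crucial feature is that every term in the final sum has a \emph{single} $b_r$ multiplying the whole linear form in the $a_i$'s; applied under the orbital-beta expectation with $a_i=\Re z_i$ and $b_r = |a^{(r)}|-|a^{(r-1)}|$, each summand becomes a one-variable Bessel value $\B_{a(N)}(\epsilon_1\Re z_1+\cdots+\epsilon_m\Re z_m;N;\theta)$, and the bound $m\,2^m\sup_{\epsilon}\B_{a(N)}(\cdots)$ follows. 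Your replacement of this step with H\"older's inequality (all exponents $=m$) accomplishes the same reduction: the product $\EE[\prod_k e^{(y_k/\sqrt N)v_k}]$ is bounded by $\prod_k(\EE[e^{(my_k/\sqrt N)v_k}])^{1/m}$, and each factor is, by Proposition \ref{prop:combinatorial} and the $y$-symmetry of $\B$, a one-variable Bessel value $\B_{a(N)}(my_k/\sqrt N;N;\theta)$ raised to the power $1/m$. Both methods then invoke the already-established $m=1$ case of Theorem \ref{thm:asymptoticbessels} (plus uniform boundedness of a uniformly convergent sequence and the positivity of $\B$ on real arguments) to finish. Your circularity remark is apt: the naive bound $e^{xb}\le e^{Rb}+e^{-Rb}$ applied termwise does indeed lead back to $m$-variable Bessel values, and H\"older is exactly the tool that avoids this; the paper's $\max_r|b_r|$ trick is the elementary alternative that does the same job. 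The H\"older route is arguably cleaner and produces the sharper constant $C_R$ rather than $m2^m C_R$, though of course only finiteness is needed here. One small wording slip at the end: each factor of the product is $\le C_R^{1/m}$ (not $\le C_R$), so the product of the $m$ factors is $\le C_R$ — the conclusion you state is right, just make the intermediate bound precise.
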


\begin{proof}
We can (and do) assume $K = B[0, R] := \{ (x_1, \ldots, x_m)\in\R^m : x_1^2 + \ldots + x_m^2 \leq R^2 \}$, for some $R>0$.

\smallskip

\textit{Step 1.}
Let $a_1, \ldots, a_m, b_1, \ldots, b_m$ be any $2m$ real numbers.
We will need the following bound:
\begin{align*}
\exp\left( \sum_{i = 1}^m  {a_i b_i} \right) &\leq \exp\left( \sum_{i = 1}^m  {\max(a_i, -a_i) \max(|b_1|, \ldots, |b_m|)} \right)\\
&\leq \sum_{\epsilon_i \in \{\pm 1\}} \exp\left( \sum_{i = 1}^m  {\epsilon_i a_i \max(|b_1|, \ldots, |b_m|)} \right)\\
&\leq \sum_{\epsilon_i \in \{\pm 1\}} \sum_{r = 1}^m \exp\left( \sum_{i = 1}^m  {\epsilon_i a_i |b_r|} \right)= \sum_{\epsilon_i \in \{\pm 1\}} \sum_{r = 1}^m \exp\left( \sum_{i = 1}^m {\epsilon_i a_i b_r} \right).
\end{align*}

\textit{Step 2.}
Let $z_1, \ldots, z_m\in\C$ be arbitrary.
From Corollary $\ref{cor:bounded2}$ and Proposition $\ref{prop:combinatorial}$, we obtain
$$
|\B_{a(N)}(z_1, \ldots, z_m; N; \theta)| \leq
\EE_{\{a_j^{(i)}\}}\left[ \exp\left( \sum_{i = 1}^m \Re z_i \left( \sum_{j  = 1}^i {a_j^{(i)}} - \sum_{j = 1}^{i - 1} {a_j^{(i-1)}} \right) \right) \right],
$$
where the expectation is taken over the orbital beta corners process with top level $a(N)$.
From the inequality in step 1, applied to $a_i := \Re z_i$ and $b_i := \sum_{j = 1}^i {a_j^{(i)}} - \sum_{j = 1}^{i - 1}{a_j^{(i-1)}}$, we deduce
\begin{align*}
|\B_{a(N)}(z_1, \ldots, z_m; N; \theta)| &\leq \sum_{\epsilon_i\in\{\pm 1\}} \sum_{r = 1}^m \EE_{\{a_j^{(i)}\}}
\left[ \exp\left( \sum_{i = 1}^m \epsilon_i \Re z_i  \left( \sum_{j  = 1}^r {a_j^{(r)}} - \sum_{j = 1}^{r - 1} {a_j^{(r-1)}} \right) \right) \right]\\
&= \sum_{\epsilon_i\in\{\pm 1\}} \sum_{r = 1}^m \B_{a(N)}(\underbrace{0, \ldots, 0}_{r-1\textrm{ times}}, \epsilon_1 \Re z_1 + \ldots + \epsilon_m \Re z_m; N; \theta);
\end{align*}
the latter equality comes from Proposition $\ref{prop:combinatorial}$.

But since the Bessel function $\B_{a(N)}(; \theta)$ is symmetric on its arguments, we have
$$\B_{a(N)}(\underbrace{0, \ldots, 0}_{r-1\textrm{ times}}, \epsilon_1 \Re z_1 + \ldots + \epsilon_m \Re z_m; N; \theta) = \B_{a(N)}(\epsilon_1 \Re z_1 + \ldots + \epsilon_m \Re z_m; N; \theta).$$
The conclusion is that
$$|\B_{a(N)}(z_1, \ldots, z_m; N; \theta)| \leq m 2^m \sup_{\epsilon_i \in \{\pm 1\}}\B_{a(N)}(\epsilon_1 \Re z_1 + \ldots + \epsilon_m \Re z_m; N; \theta).$$

\textit{Step 3.}
Use the inequality just obtained for $z_i := y_i/\sqrt{N}$, $i = 1, \ldots, m$:
$$
\left| \B_{a(N)}\left( \frac{y_1}{\sqrt{N}}, \dots, \frac{y_m}{\sqrt{N}}; N; \theta \right) \right|
\leq m 2^m \sup_{\epsilon_i \in \{\pm 1\}}
\B_{a(N)}\left( \frac{\epsilon_1\Re y_1 + \ldots + \epsilon_m \Re y_m}{\sqrt{N}}; N; \theta \right).$$
Therefore, using also Theorem $\ref{thm:asymptoticbessels}$ for $m = 1$ (which was proved in the previous subsection):
$$
\sup_{N \geq m}\sup_{(y_1, \ldots, y_m)\in B[0, R]}\left| \B_{a(N)}\left( \frac{y_1}{\sqrt{N}}, \dots, \frac{y_m}{\sqrt{N}}; N; \theta \right) \right|
\leq m 2^m \sup_{N \geq m}\sup_{y\in B[0, mR]} \B_{a(N)}\left( \frac{y}{\sqrt{N}}; N; \theta \right) < \infty.$$
\end{proof}

\begin{cor}\label{cor:boundedness}
Let $K \subseteq \C^m$ be any compact set, and $\{a(N) \in \PP_N\}_{N \geq 1}$ be a regular sequence of ordered tuples and such that $\sum_{i=1}^N{a(N)_i} = 0$, for all $N\in\N_+$, then
\begin{equation*}
\sup_{N \geq m}\sup_{(y_1, \dots, y_m)\in K}{\left| \B_{a(N)}\left( \frac{y_1}{\sqrt{N}}, \dots, \frac{y_m}{\sqrt{N}}; N; \theta \right)  \right|} < \infty.
\end{equation*}
\end{cor}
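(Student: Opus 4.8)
The plan is to reduce this complex-variable statement to its real-variable counterpart, Lemma \ref{lem:boundedness}, using the domination inequality of Corollary \ref{cor:bounded2}. The entire content of the corollary is carried by that lemma; the passage from $\R^m$ to $\C^m$ is essentially automatic.

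First I would record that, by definition, $\B_{a(N)}(y_1/\sqrt{N}, \ldots, y_m/\sqrt{N}; N; \theta) = \B_{a(N)}(y_1/\sqrt{N}, \ldots, y_m/\sqrt{N}, 0^{N-m}; \theta)$, and that since $\sqrt{N}$ is a positive real, the $i$-th scaled argument has real part $\Re(y_i)/\sqrt{N}$, while the trailing $N-m$ zeros have vanishing real part. Applying Corollary \ref{cor:bounded2} to the full $N$-tuple of arguments then gives, for every $N \geq m$ and every $(y_1, \ldots, y_m) \in \C^m$,
$$\left| \B_{a(N)}\left( \frac{y_1}{\sqrt{N}}, \ldots, \frac{y_m}{\sqrt{N}}; N; \theta \right) \right| \leq \B_{a(N)}\left( \frac{\Re y_1}{\sqrt{N}}, \ldots, \frac{\Re y_m}{\sqrt{N}}; N; \theta \right),$$
the right-hand side being a nonnegative real number by Proposition \ref{prop:combinatorial} (it is the expectation of a positive quantity).

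Next I would pass to the set $K' := \{ (\Re y_1, \ldots, \Re y_m) : (y_1, \ldots, y_m) \in K \} \subseteq \R^m$, which is compact, being the continuous image of the compact set $K$ under the real-part projection. Taking the supremum over $(y_1, \ldots, y_m) \in K$ and over $N \geq m$ in the displayed inequality bounds the quantity of interest by $\sup_{N \geq m} \sup_{(x_1, \ldots, x_m) \in K'} \B_{a(N)}(x_1/\sqrt{N}, \ldots, x_m/\sqrt{N}; N; \theta)$, which is finite by Lemma \ref{lem:boundedness} applied with the compact set $K'$ (the regularity and mean-zero hypotheses on $\{a(N)\}$ are unchanged). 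This completes the proof.

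The only points requiring any care, and hence the closest thing to an obstacle, are bookkeeping ones: Corollary \ref{cor:bounded2} must be invoked for the whole list of $N$ arguments — the $m$ scaled variables together with the $N-m$ zeros, whose real parts are again zero — rather than for the first $m$ alone, and one must note that Lemma \ref{lem:boundedness} does cover $K'$ since $K'$ inherits compactness from $K$. There is no analytic difficulty beyond what is already contained in Lemma \ref{lem:boundedness}.
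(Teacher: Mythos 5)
Your proof is correct and takes exactly the approach the paper uses: dominate the modulus by the Bessel function at the real parts via Corollary \ref{cor:bounded2}, then apply Lemma \ref{lem:boundedness} to the compact real-part projection of $K$. The paper states this in one line; your version simply fills in the bookkeeping.
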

\begin{proof}
The only difference between the statement of this corollary and that of Lemma \ref{lem:boundedness} is that now $K$ is a compact subset of $\C^{m+1}$ (and not of $\R^{m+1}$).
The result follows from Lemma \ref{lem:boundedness} and Corollary \ref{cor:bounded2}.
\end{proof}

\bigskip

First we prove the pointwise limit in the statement of Theorem $\ref{thm:asymptoticbessels}$, and later we obtain the general result by analytic continuation.

\begin{prop}\label{prop:masymptotics}
Let $y_1, y_2, \dots, y_m$ be real numbers such that
\begin{equation*}
\begin{gathered}
y_1 > y_2 > \dots > y_m > 0,\\
y_1 - y_2 > y_3,\ y_2 - y_3 > y_4, \dots,\ y_{m-2} - y_{m-1} > y_m.
\end{gathered}
\end{equation*}
Let $\{a(N)\in\PP_N\}_{N \geq 1}$ be a regular sequence of ordered tuples with limiting measure $\mu$ and such that $\sum_{i=1}^N{a(N)_i} = 0$, for all $N\in\N_+$.
Then
$$
\lim_{N\rightarrow\infty}{\B_{a(N)}\left( \frac{y_1}{\sqrt{N}}, \dots, \frac{y_m}{\sqrt{N}}; N; \theta \right)} =
\exp\left( \frac{\Var[\mu]}{2\theta}\sum_{i=1}^m{y_i^2} \right).
$$
\end{prop}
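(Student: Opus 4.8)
The plan is to prove Proposition~\ref{prop:masymptotics} by induction on $m$. By Corollary~\ref{cor:multiplicationlabel} and the symmetry of $\B$ in its first group of variables, $\B_{a(N)}(y_1/\sqrt N,\dots,y_m/\sqrt N;N;\theta)=\B_{b(N)}(-y_1/\sqrt N,\dots,-y_m/\sqrt N;N;\theta)$, where $b(N)\in\PP_N$ is the decreasing rearrangement of $(-a(N)_1,\dots,-a(N)_N)$; the sequence $\{b(N)\}_{N\ge1}$ is again regular and mean-zero, with limiting measure $\nu$ equal to the pushforward of $\mu$ under $t\mapsto-t$, so $\Var[\nu]=\Var[\mu]$. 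Hence it suffices to prove, for every mean-zero regular sequence $\{b(N)\in\PP_N\}_{N\ge1}$ with limiting measure $\nu$ and every $(u_1,\dots,u_m)$ satisfying the hypotheses of the Proposition, the assertion
\begin{equation*}
(\star_m)\colon\qquad \lim_{N\to\infty}\B_{b(N)}\!\left(-\frac{u_1}{\sqrt N},\dots,-\frac{u_m}{\sqrt N};N;\theta\right)=\exp\!\left(\frac{\Var[\nu]}{2\theta}\sum_{i=1}^m u_i^2\right).
\end{equation*}
The base case $(\star_1)$ is exactly the $m=1$ statement proved in Section~\ref{sec:m1} (proved there for arguments in compact subsets of $\C$, in particular for negative reals).

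For the inductive step I would assume $(\star_1)$ and $(\star_{m-1})$, and apply Theorem~\ref{besselthm3} with parameter $m-1$ to the tuple $(u_1,\dots,u_{m-1})$ and auxiliary variable $y:=u_m$; the hypotheses $u_i-u_{i+1}>u_{i+2}$ for $1\le i\le m-2$ give the required condition $\min_{1\le i\le m-2}(u_i-u_{i+1})>u_m$. Then rescale $u_i\mapsto u_i/\sqrt N$ for all $i$ and substitute $z_i\mapsto z_i/\sqrt N$. Using that $F_{y_1,\dots,y_{m-1},y}$ is homogeneous of degree $0$ and $G_{\theta,y_1,\dots,y_{m-1},y}$ homogeneous of degree $d_G:=(\theta-1)\bigl(\binom{m-1}{2}+\binom{m}{2}\bigr)+\binom{m}{2}$ under simultaneous scaling of $(y_i),y,(z_i)$, together with $\Gamma(N\theta)/\Gamma((N-m+1)\theta)\sim(N\theta)^{(m-1)\theta}$, Theorem~\ref{besselthm3} turns into
\begin{multline*}
\B_{b(N)}\!\left(-\tfrac{u_1}{\sqrt N},\dots,-\tfrac{u_{m-1}}{\sqrt N};N;\theta\right)\B_{b(N)}\!\left(-\tfrac{u_m}{\sqrt N};N;\theta\right)\\
=C_N\int_{\V_{u_m}} G\,F^{\theta(N-m+1)-1}\,\B_{b(N)}\!\left(-\tfrac{u_1+z_1}{\sqrt N},\dots,-\tfrac{u_{m-1}+z_{m-1}}{\sqrt N},\tfrac{-u_m+z_1+\dots+z_{m-1}}{\sqrt N};N;\theta\right)\prod_{i=1}^{m-1}z_i^{\theta-1}\,dz_i,
\end{multline*}
with $G:=G_{\theta,u_1,\dots,u_{m-1},u_m}$, $F:=F_{u_1,\dots,u_{m-1},u_m}$, $\V_{u_m}:=\{z_i\ge0,\ z_1+\dots+z_{m-1}\le u_m\}$, and $C_N\sim\kappa_0 N^{(m-1)\theta}$ as $N\to\infty$ for an explicit $\kappa_0=\kappa_0(\theta,u_1,\dots,u_m)>0$.

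The core of the argument is a Laplace/Watson-type analysis of this integral. Since $u_j>u_m$ for $j\le m-1$, one checks that $F$ attains its maximum over the compact set $\V_{u_m}$ uniquely at the origin, with $F(0)=1$ and $\log F(z_1,\dots,z_{m-1})=-\sum_{j=1}^{m-1}c_j z_j+O(|z|^2)$, where $c_j:=1/u_m-1/u_j>0$; hence the weight $F^{\theta(N-m+1)-1}G\prod_i z_i^{\theta-1}$ concentrates at scale $N^{-1}$ near $0$, with total mass asymptotic to $\kappa_1 N^{-(m-1)\theta}$, where $\kappa_1=G(0)\,\Gamma(\theta)^{m-1}\theta^{-(m-1)\theta}\prod_j c_j^{-\theta}>0$, and with exponentially small contribution from outside any fixed neighborhood of $0$. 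On the concentration region the Bessel factor in the integrand equals $X_N(1+o(1))$, where $X_N:=\B_{b(N)}(-u_1/\sqrt N,\dots,-u_m/\sqrt N;N;\theta)$: by Corollary~\ref{cor:boundedness} the family $w\mapsto\B_{b(N)}(-w_1/\sqrt N,\dots,-w_m/\sqrt N;N;\theta)$ is normal on $\C^m$, so along any subsequence it converges locally uniformly to an entire function, which supplies the equicontinuity needed as the arguments tend to $0$; outside the concentration region this factor remains bounded (again by Corollary~\ref{cor:boundedness}) and is absorbed by the small mass. Therefore the right-hand side equals $\kappa_0\kappa_1\,X_N(1+o(1))$, while $(\star_{m-1})$ and $(\star_1)$ give that the left-hand side converges to $\exp\bigl(\tfrac{\Var[\nu]}{2\theta}\sum_{i=1}^m u_i^2\bigr)$; consequently every subsequential limit of $X_N$ equals $\exp\bigl(\tfrac{\Var[\nu]}{2\theta}\sum_{i=1}^m u_i^2\bigr)/(\kappa_0\kappa_1)$, so $(\star_m)$ follows once we know $\kappa_0\kappa_1=1$.

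Finally, a direct computation of the two explicit constants shows that all factors of $\Gamma(\theta)$, of $\theta$, and all powers of the $u_i$ and of the differences $u_i-u_j$ occurring in $\kappa_0$ and $\kappa_1$ cancel, giving $\kappa_0\kappa_1=1$; alternatively, since $\kappa_0\kappa_1$ depends only on $\theta$ and $u_1,\dots,u_m$, one may test the rescaled identity on $b(N)_i:=(N+1-2i)/N^3\in\PP_N$, for which $\nu=\delta_0$ and, by Corollary~\ref{cor:multiplicationlabel} together with elementary bounds from Proposition~\ref{prop:combinatorial}, both sides and $X_N$ tend to $1$. This completes the induction and hence proves Proposition~\ref{prop:masymptotics}. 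I expect the main obstacle to lie in the concentration step: uniformly controlling the Bessel factor inside the integral on the $N^{-1}$-scale (which is precisely what the normal-family argument via Corollary~\ref{cor:boundedness} is designed to handle) and verifying that the powers of $N$ coming from the rescaled prefactor, from $\Gamma(N\theta)/\Gamma((N-m+1)\theta)$, from $G$, and from $\prod_i z_i^{\theta-1}dz_i$ exactly offset the $N^{-(m-1)\theta}$ produced by the concentrating mass, leaving the residual constant equal to $1$.
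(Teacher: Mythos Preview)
Your proposal is correct and follows essentially the same route as the paper: induction on $m$ with base case Section~\ref{sec:m1}, application of Theorem~\ref{besselthm3} after the rescaling $y_i\mapsto y_i/\sqrt N$, $z_i\mapsto z_i/\sqrt N$, and a Laplace--type concentration analysis of the resulting integral near $\mathbf z=0$ using that $F$ is uniquely maximized there with $\log F\approx -\sum_j c_j z_j$.

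A few cosmetic differences are worth noting. The paper works directly with $\B_{a(N)}(-y_i/\sqrt N;\ldots)$ rather than passing to the reflected sequence $b(N)$, but this is the same thing since Theorem~\ref{besselthm3} is already stated for negative arguments. Where you invoke normality of the family $w\mapsto\B_{b(N)}(-w/\sqrt N;N;\theta)$ to get equicontinuity of the Bessel factor on the shrinking concentration region, the paper does the equivalent thing explicitly: Cauchy's integral formula plus Corollary~\ref{cor:boundedness} bounds the first partials uniformly, and then the mean value theorem gives the $O(N^{-\epsilon})$ error. The paper also makes the split of $\V_y$ into $[0,N^{-\epsilon}]^m$ (with $1/2<\epsilon<1$) and its complement explicit, and carries out the direct computation of your $\kappa_0\kappa_1$ in its Claim~2, where one sees the cancellation of the $\Gamma(\theta)^{m}$, $(y_1\cdots y_m)^\theta$, $y^{m\theta}$, $(y_i-y)^\theta$ and Vandermonde factors. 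Your alternative test-sequence verification of $\kappa_0\kappa_1=1$ (taking a regular mean-zero sequence concentrating at $0$ so that all Bessel factors tend to $1$ by the combinatorial formula) is a legitimate shortcut that the paper does not use; it trades the bookkeeping of constants for a second application of the Laplace estimate.
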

\begin{proof}
The proof of the proposition goes by induction on $m$; the base case $m = 1$ is proved in Section $\ref{sec:m1}$.
Assume the proposition holds up to some value $m \geq 1$, and let us prove it for $(m+1)$. Consider real numbers $y_1, \dots, y_m, y$ such that
\begin{equation}\label{proof:inequalities}
\begin{gathered}
y_1 > \dots > y_m > y > 0,\\
y_1 - y_2 > y_3, \ \dots, \ y_{m-2} - y_{m-1} > y_m, \ y_{m - 1} - y_m > y.
\end{gathered}
\end{equation}
The main tool will be Theorem $\ref{besselthm3}$.
We will use bold letters to denote vectors, e.g.,
$$\bfy := (y_1, \ldots, y_m, y),\quad \bfz := (z_1, \ldots, z_m).$$
We want to replace $y_k \mapsto y_k/\sqrt{N}$, $k = 1, \ldots, m$, $y \mapsto y/\sqrt{N}$, and make the change of variables $z_k \mapsto z_k/\sqrt{N}$, $k = 1, \ldots, m$, in Theorem $\ref{besselthm3}$.
From formulas $(\ref{defn:FGfns})$ for the functions $G_{\theta, \bfy}(\bfz)$ and $F_{\bfy}(\bfz)$, we have
$$G_{\theta, \bfy/\sqrt{N}}(\bfz/\sqrt{N}) = \sqrt{N}^{{m \choose 2} - \theta m^2} G_{\theta, \bfy}(\bfz),\quad F_{\bfy/\sqrt{N}}(\bfz/\sqrt{N}) = F_{\bfy}(\bfz).$$
(We are denoting $\bfy/\sqrt{N} := (y_1/\sqrt{N}, \ldots, y_m/\sqrt{N}, y/\sqrt{N})$ and $\bfz/\sqrt{N} := (z_1/\sqrt{N}, \ldots, z_m/\sqrt{N})$.)

Then we have
\begin{multline}\label{pieri1}
\B_{a(N)} \left( -\frac{y_1}{\sqrt{N}}, \ldots, -\frac{y_m}{\sqrt{N}}; N; \theta \right)
\B_{a(N)} \left( -\frac{y}{\sqrt{N}}; N; \theta \right) =
\frac{\Gamma(N\theta)}{\Gamma((N-m)\theta)\Gamma(\theta)^m }\\
\times\frac{\prod_{1 \leq i < j \leq m }{(y_i - y_j)^{1 - 2\theta}}}{y^{m\theta} (y_1\cdots y_m)^{\theta}}
\times\int\dots\int { G_{\theta, \bfy}(\bfz) F_{\bfy}(\bfz)^{\theta(N-m)-1} }\\
{\B_{a(N)}\left(- \frac{y_1 + z_1}{\sqrt{N}}, \ldots, -\frac{y_m + z_m}{\sqrt{N}}, -\frac{y-(z_1 + \dots + z_m)}{\sqrt{N}}; N; \theta\right)
\prod_{i=1}^m{(z_i^{\theta-1}dz_i)} },
\end{multline}
where $G_{\theta, \bfy}(\bfz)$, $F_{\bfy}(\bfz)$ are defined in $(\ref{defn:FGfns})$ and the domain of integration $\V_y \subset \R^m$ is the set of points $(z_1, \ldots, z_m)$ given by the inequalities
\begin{equation*}
\left\{
\begin{gathered}
z_1, \ldots, z_m \geq 0,\\
y \geq z_1 + \dots + z_m.
\end{gathered}
\right.
\end{equation*}
We want to take the limit of the right hand side of $(\ref{pieri1})$ as $N$ tends to infinity.
The factor before the integral is easy because of a well-known asymptotic result for the Gamma function; it gives
\begin{equation}\label{gamma1}
\frac{\Gamma(N\theta)}{\Gamma((N-m)\theta)} = (N\theta)^{m\theta}(1 + O(N^{-1})).
\end{equation}

Let us now focus on studying the asymptotics of the integral as $N$ goes to infinity.
Denote the integrand of \eqref{pieri1} by $H_{\theta, \bfy, a(N)}(\bfz)$.
Fix any $1/2 < \epsilon < 1$; it will be convenient to break up the integration domain of the integral of $(\ref{pieri1})$ into two parts:
$$[0, N^{-\epsilon}]^m \textrm{ and } \V_y\setminus [0, N^{-\epsilon}]^m.$$

\textit{Claim 1.} 
\begin{equation*}
\left| \int\dots\int_{\V_y \setminus [0, N^{-\epsilon}]^m}{ H_{\theta, \bfy, a(N)}(\bfz) d\bfz } \right| = o(N^{-m\theta}).
\end{equation*}

\textit{Claim 2.}
\begin{multline}\label{claim2}
\int_0^{N^{-\epsilon}}\dots \int_0^{N^{-\epsilon}}{ H_{\theta, \bfy, a(N)}(\bfz) d\bfz } =
(N\theta)^{-m\theta}\frac{\Gamma(\theta)^m y^{m\theta}\prod_{i=1}^m{y_i^{\theta}}}{\prod_{1 \leq i < j \leq m}(y_i - y_j)^{1 - 2\theta}}\\
\times(1 + O(N^{1 - 2\epsilon}))
\times \left( \B_{a(N)}\left( -\bfy/\sqrt{N}; N; \theta\right) + O(N^{-\epsilon}) \right).
\end{multline}
Let us complete the inductive step assuming the claims are true.
From the claims, $(\ref{pieri1})$ and $(\ref{gamma1})$:
\begin{multline}\label{eqn:forinduction}
\B_{a(N)}({-y_1/\sqrt{N}}, \dots, {-y_m/\sqrt{N}}; N; \theta)\B_{a(N)}({-y/\sqrt{N}}; N; \theta) = o(1)\\
+ (1 + O(N^{1 - 2\epsilon})) \times \left( \B_{a(N)}({-y_1/\sqrt{N}}, \dots, {-y_m/\sqrt{N}}, {-y/\sqrt{N}}; N; \theta) + O(N^{-\epsilon}) \right).
\end{multline}
From Lemma $\ref{lem:boundedness}$, we know
\begin{equation*}
\left| \B_{a(N)}\left( {-y_1/\sqrt{N}}, \dots, {-y_m/\sqrt{N}}, {-y/\sqrt{N}} ; N; \theta \right) \right| = O(1).
\end{equation*}
Therefore, recalling that $1/2 < \epsilon < 1$, $(\ref{eqn:forinduction})$ yields
\begin{multline}\label{eqn:asymptoticstatement}
\B_{a(N)}({-y_1/\sqrt{N}}, \dots, {-y_m/\sqrt{N}}; N; \theta) \B_{a(N)}({-y/\sqrt{N}}; N; \theta)\\
= \B_{a(N)}({-y_1/\sqrt{N}}, \dots, {-y_m/\sqrt{N}}, {-y/\sqrt{N}}; N; \theta) + o(1).
\end{multline}
With $(\ref{eqn:asymptoticstatement})$, our desired result can be proved for $(m+1)$, completing the inductive hypothesis and the proof.
It remains to prove the claims above.

\medskip

\textit{Proof of Claim 1.}
From the definition $(\ref{defn:FGfns})$ of $G_{\theta, \bfy}(\bfz)$ and the inequalities $(\ref{proof:inequalities})$ for $y_1, \ldots, y_m, y$, the function $G_{\theta, \bfy}(\bfz)$ is continuous on the compact set $\V_y$.
As a result,
$$\sup_{\bfz\in\V_y}{|G_{\theta, \bfy}(\bfz)|} < \infty.$$
From Lemma $\ref{lem:boundedness}$, one also has
\begin{equation*}
\sup_{N \geq m+1}\sup_{\bfz\in\V_y}{\left| \B_{a(N))}\left(-\frac{y_1 + z_1}{\sqrt{N}}, \dots, -\frac{y_m + z_m}{\sqrt{N}}, -\frac{y - (z_1 + \dots + z_m)}{\sqrt{N} }; N; \theta\right)\right|} < \infty.
\end{equation*}
From the estimates above, the claim is reduced to prove
\begin{equation}\label{eqn:claim1toprove}
\int\cdots\int{ | F_{\bfy}( \bfz ) |^{\theta(N - m) - 1} \prod_{i=1}^m{(z_i^{\theta - 1}dz_i)} } = o(N^{-m\theta}),
\end{equation}
where the domain of integration is $\V_y \setminus [0, N^{-\epsilon}]^m$.
The function $F_{\bfy}(\bfz)$ takes positive real values for $\bfz\in\V_y$, so we can drop the absolute value in $(\ref{eqn:claim1toprove})$. Next, for $i = 1, 2, \ldots, m$, one can verify
$$\frac{\partial}{\partial z_i}\ln(F_{\bfy}(\bfz)) = -\frac{1}{y - z_1 - \ldots - z_m} - \frac{1}{y_i + z_i} < 0.$$
Therefore, $F_{\bfy}(\bfz)$ is monotone decreasing with respect to its variables $z_1, \ldots, z_m$.
For any $\bfz\in\V_y\setminus [0, N^{-\epsilon}]^m$, there exists some $i \in \{1, \ldots, m\}$ such that $z_i > N^{-\epsilon}$.
In this case, let $\mathbf{e}_i$ be the vector in $\R^m$ with a $1$ in the $i$-th position and zeroes elsewhere; we then have
$$F_{\bfy}(\bfz) \leq F_{\bfy}(N^{-\epsilon}\mathbf{e}_i) = (1 - N^{-\epsilon}y^{-1})(1 + N^{-\epsilon}y_i^{-1}).$$
Then $(\ref{eqn:claim1toprove})$ would follow if we proved
$$\sum_{i=1}^m{ ((1 - N^{-\epsilon}y^{-1})(1 + N^{-\epsilon}y_i^{-1}))^{\theta(N-m)-1} \int \cdots \int {\prod_{j=1}^m{(z_j^{\theta - 1}dz_j)}} } = o(N^{-m\theta}),$$
where the domain of integration for the $i$-th summand is $\V_y \cap \{ \bfz \in \R_{\geq 0}^m \mid z_i \geq N^{-\epsilon} \}$.
But this bound is obvious in view of the fact that each of the $m$ integrals is bounded by
$$\int_0^y \cdots \int_0^y{\prod_{j = 1}^m{(z_j^{\theta - 1}dz_j)}} = (y^{\theta}/\theta)^m = O(1),$$
and that each equantity
$$\left((1 - N^{-\epsilon}y^{-1})(1 + N^{-\epsilon}y_i^{-1})\right)^{\theta(N-m)-1} =
\left( 1 - N^{-\epsilon}y^{-1}y_i^{-1}(y_i - y) + O(N^{-2\epsilon}) \right)^{\theta(N-m)-1}$$
is exponentially small in $N$.
Thus claim 1 is proved.

\smallskip

\textit{Proof of Claim 2.}
We estimate the integrand of $(\ref{claim2})$ when $0 \leq z_i \leq N^{-\epsilon}$, for $i = 1, \ldots, m$.
Denote
\begin{equation}\label{fNdef}
f_N(x_1, \ldots, x_{m+1}) := \B_{a(N)}\left( -\frac{x_1}{\sqrt{N}}, \ldots, -\frac{x_{m+1}}{\sqrt{N}} ; N, \theta \right).
\end{equation}
Corollary \ref{cor:boundedness} shows that $|f_N(x_1, \ldots, x_{m+1})|$ has a uniform upper bound for large $N$ and for $(x_1, \ldots, x_{m+1})$ varying over an arbitrary compact subset of $\C^{m+1}$.
As a result, the moduli of the partial derivatives $|(\partial_j f_N)(x_1, \ldots, x_{m+1})|$, $1 \leq j \leq m+1$, are also uniformly bounded for $N \geq m+1$ and for $(x_1, \ldots, x_{m+1})$ varying over an arbitrary compact subset of $\C^{m+1}$ (as a result of Cauchy's integral formula for derivatives).

From the mean value's theorem, one has
\begin{multline}\label{fNbound}
| f_N(y_1+z_1, \ldots, y_m+z_m, y-(z_1+\dots+z_m)) - f_N(y_1, \ldots, y_m, y) |\\
\leq \sum_{\substack{1 \leq i \leq m \\ 1 \leq j \leq m+1}}{|z_i| \sup_{t \in [0, 1]}{\left| \partial_j f_N (y_1 + tz_1, \ldots, y_m+tz_m, y - t(z_1 + \dots + z_m)) \right|} }.
\end{multline}
As mentioned above, the moduli $|(\partial_j f_N)(x_1, \ldots, x_{m+1})|$ are uniformly bounded for $(x_1, \ldots, x_{m+1})$ varying over the compact set $\{ (y_1 + tz_1, \ldots, y_m + tz_m, y - t(z_1 + \dots + z_m)) : t, z_1, \ldots, z_m\in [0, 1] \}$.
Then we can replace the right-hand side of \eqref{fNbound} by $O(|z_1| + \dots + |z_m|)$ --- the constant in the big--O notation would depend on $y_1, \ldots, y_m, y$, but not on $z_1, \ldots, z_m$, as long as they are small enough and positive (and $N \geq m+1$).
Plugging \eqref{fNdef} into \eqref{fNbound}, we deduce
\begin{multline}\label{claim2.1}
\B_{a(N)}\left( -\frac{y_1+z_1}{\sqrt{N}}, \ldots, -\frac{y_m+z_m}{\sqrt{N}}, -\frac{y - (z_1 + \ldots + z_m)}{\sqrt{N}} ; N; \theta \right) =\\
\B_{a(N)}\left( -\frac{y_1}{\sqrt{N}}, \ldots, -\frac{y_m}{\sqrt{N}}, -\frac{y}{\sqrt{N}} ; N; \theta \right)
+ O(N^{-\epsilon}), \text{ as }N \rightarrow \infty,
\end{multline}
whenever $(z_1, \ldots, z_m)\in [0, N^{-\epsilon}]^{m}$.

On the other hand, from the definitions $(\ref{defn:FGfns})$ for $G_{\theta, \bfy}(z)$ and $F_{\bfy}(\bfz)$, we have
\begin{equation}\label{claim2.2}
G_{\theta, \bfy}(\bfz) = \prod_{i=1}^m{(y_i - y)^{\theta}} \prod_{1 \leq i < j \leq m}(y_i - y_j)^{2\theta - 1}\cdot (1 + O(N^{-\epsilon})),\quad
F_{\bfy}(\bfz)^{-\theta m - 1} = 1 + O(N^{-\epsilon});
\end{equation}
both bounds are over $(z_1, \ldots, z_m)\in [0, N^{-\epsilon}]^{m}$.

Due to $(\ref{claim2.1})$ and $(\ref{claim2.2})$, Claim 2 is reduced to
\begin{equation}\label{toprove:claim2}
\int_0^{N^{-\epsilon}}\dots \int_0^{N^{-\epsilon}}{F_{\bfy}(\bfz)^{N\theta}\prod_{i=1}^m{(z_i^{\theta - 1}dz_i)}} = \frac{\Gamma(\theta)^m y^{m\theta} {(y_1\cdots y_m)^{\theta}}}{(N\theta)^{m\theta}\prod_{i=1}^m(y_i - y)^{\theta}} \cdot \left( 1 + O(N^{1 - 2\epsilon}) \right).
\end{equation}
Begin with the Taylor expansion around $\bfz = (0^m)$:
\begin{equation*}
F_{\bfy}(\bfz)^{N\theta} = \exp\left( N \theta \ln{ F_{\bfy}(\bfz) } \right)= 
\exp\left( N \theta \left\{ \ln{ F_{\bfy}(0^m)} + \sum_{i=1}^m{z_i\left. \frac{\partial \ln F_{\bfy}(\bfz)}{\partial z_i} \right|_{\bfz =  (0^m)} } + O\left( \sum_{i, j}{|z_iz_j|} \right) \right\} \right).
\end{equation*}
We now calculate $F_{\bfy}(0^m) = 1$ and
\begin{equation*}
\left. \frac{\partial \ln F_{\bfy}(\bfz)}{\partial z_i} \right|_{\bfz = (0^m)} =
\left. \left\{ \frac{1}{z_1 + \ldots + z_m - y} + \frac{1}{z_i + y_i} \right\} \right|_{\bfz = (0^m)} =
- \frac{y_i - y}{yy_i}, \textrm{ for } i = 1, \ldots, m.
\end{equation*}
Therefore, when $0 \leq z_i \leq N^{-\epsilon}$ for $i = 1, 2, \ldots, m$, we have
$$F_{\bfy}(\bfz)^{N\theta} = \prod_{i=1}^m{\exp\left( -\frac{N\theta(y_i - y)}{yy_i}z_i \right)}\times (1 + O(N^{1-2\epsilon})).$$
Back into $(\ref{toprove:claim2})$, we see that its left hand side is the product
$$
\prod_{i=1}^m{\int_0^{N^{-\epsilon}}{\exp\left( -\frac{N\theta (y_i - y)}{yy_i}z_i \right)}z_i^{\theta - 1}dz_i} \times (1 + O(N^{1-2\epsilon})).
$$
In the $i$-th integral above, make the change of variables $z_i = cw_i/N$, for $c = (yy_i)/(\theta (y_i - y))$, so that $w_i$ now ranges from $0$ to $cN^{1-\epsilon}$.
Thus the $i$-th integral above is estimated by
$$\frac{c^{\theta}}{N^{\theta}}\int_0^{cN^{1-\epsilon}}{e^{-w_i}w_i^{\theta - 1}dw_i} \approx
\frac{c^{\theta}}{N^{\theta}} \int_0^{\infty}{e^{-w_i}w_i^{\theta - 1}dw_i} = \frac{c^{\theta}}{N^{\theta}} \Gamma(\theta)
= \frac{\Gamma(\theta)(yy_i)^{\theta}}{(N\theta)^{\theta} (y_i - y)^{\theta}}.$$
The estimate we just made leaves an exponentially small relative error.
Then $(\ref{toprove:claim2})$ is proved.
\end{proof}

\smallskip

\begin{proof}[Proof of Theorem $\ref{thm:asymptoticbessels}$]
Let $\{a(N)\in\PP_N\}_{N \geq 1}$ be a regular sequence of ordered tuples with limiting measure $\mu$.
Because of Lemma $\ref{lem:meanzero}$, we can assume without loss of generality that $\sum_{i=1}^N{a(N)_i} = 0$ for all $N\in\N_+$.
We are going to prove
\begin{equation}\label{eqn:besselslimits}
\lim_{N\rightarrow\infty}{\B_{a(N)}\left( \frac{y_1}{\sqrt{N}}, \dots, \frac{y_m}{\sqrt{N}}; N; \theta \right)} =
\exp\left( \frac{\Var[\mu]}{2\theta}\sum_{i=1}^m{y_i^2} \right),
\end{equation}
uniformly for $(y_1, \ldots, y_m)$ belonging to compact subsets of $\C^m$.

The proof is by induction on $m$.
For $m = 1$, $(\ref{eqn:besselslimits})$ is proved in Subsection $\ref{sec:m1}$.
Now assume that $(\ref{eqn:besselslimits})$ holds for some $m\in\N_+$ (and uniformly on $y_1, \ldots, y_m$), then we prove that it also holds for $(m+1)$.
Let $K = B[0, R]\subset\R^{m+1}$ be the closed ball of radius $R$ inside $\R^{m+1}$.
From Lemma $\ref{lem:boundedness}$, we have
$$
\limsup_{N \rightarrow \infty}\sup_{(y_1, \dots, y_{m+1})\in K}{\left| \B_{a(N)}\left( \frac{y_1}{\sqrt{N}}, \dots, \frac{y_{m+1}}{\sqrt{N}}; N; \theta \right)  \right|} < \infty.
$$
By Montel's theorem, each subsequence of
$$
\left\{  \B_{a(N)}\left( \frac{y_1}{\sqrt{N}}, \dots, \frac{y_{m+1}}{\sqrt{N}}; N; \theta \right)  \right\}_{N \geq m+1}
$$
has a sub-subsequence that converges uniformly on $K$ to some function $\Phi(y_1, \ldots, y_{m+1})$.
The limit $\Phi$ should be an analytic function on the open ball $B(0, R)$.
Moreover, because of Proposition $\ref{prop:masymptotics}$, each such limit should be equal to the right hand side of $(\ref{eqn:besselslimits})$ in the subdomain of $B(0, R)$ that is defined by the inequalities
\begin{equation*}
\left\{
\begin{aligned}
&y_1 > \dots > y_m > y_{m+1} > 0,\\
&y_1 - y_2 > y_3,  \dots, y_{m-2} - y_{m-1} > y_m, \ y_{m-1} - y_m > y_{m+1}.
\end{aligned}
\right.
\end{equation*}
Since the right hand side of $(\ref{eqn:besselslimits})$ is an entire function, analytic continuation shows that each function $\Phi(y_1, \ldots, y_{m+1})$ must be equal to the right hand side of $(\ref{eqn:besselslimits})$ in the open ball $B(0, R)$.
Since the radius $R>0$ was arbitrary, the result follows.
\end{proof}


\begin{thebibliography}{9}

\bibitem{A}
G. W. Anderson. A short proof of Selberg’s generalized beta formula. In Forum Math., vol. 3, no. 3, pp. 415-418. Walter de Gruyter, Berlin/New York, 1991.

\bibitem{AGZ}
G. W. Anderson, A. Guionnet, O. Zeitouni. An Introduction to Random Matrices. Cambridge University Press, 2010.

\bibitem{An}
J-P Anker. An introduction to Dunkl theory and its analytic aspects. Analytic, Algebraic and Geometric Aspects of Differential Equations. Birkhäuser, Cham, 2017, pp. 3-58.

\bibitem{Ba}
Y. Baryshnikov. GUEs and queues. Probab. Theory Related Fields 119, no. 2 (2001), pp. 256-274.

\bibitem{BE}
H. Bateman, and A. Erdelyi. Higher transcendental functions. Vol. 1. No. 2. New York: McGraw-Hill, 1953.

\bibitem{BFG}
F. Bekerman, A. Figalli, and A. Guionnet. Transport Maps for $\beta$-Matrix Models and Universality. Comm. Math. Phys. 338, no. 2 (2015), pp. 589-619.

\bibitem{BCG}
F. Benaych-Georges, C. Cuenca and V. Gorin. In preparation.

\bibitem{B}
R. Bhatia. Matrix analysis, volume 169 of Graduate Texts in Mathematics, 1997.

\bibitem{BC}
A. Borodin, and I. Corwin. Macdonald processes. Probab. Theory Related Fields 158, no. 1-2 (2014), pp. 225-400.

\bibitem{BG}
A. Borodin, and V. Gorin. General $\beta$-Jacobi Corners Process and the Gaussian Free Field. Comm. Pure Appl. Math. 68, no. 10 (2015), pp. 1774-1844.

\bibitem{BGu}
G. Borot, and A. Guionnet. Asymptotic expansion of $\beta$ matrix models in the one-cut regime. Comm. Math. Phys. 317, no. 2 (2013), pp. 447-483.

\bibitem{BEY}
P. Bourgade, L. Erd\"{o}s, and H. T. Yau. Universality of general $\beta$-ensembles. Duke Math. J. 163, no.6 (2014), pp. 1127-1190.

\bibitem{CE}
G. Cipolloni, and L. Erd\"{o}s. Fluctuations for linear eigenvalue statistics of sample covariance random matrices. Preprint, arXiv:1806.08751 (2018).

\bibitem{C}
E. T. Copson. Asymptotic expansions. No. 55. Cambridge University Press, 2004.

\bibitem{Cu18b}
C. Cuenca. Pieri integral formula and asymptotics of Jack unitary characters. Selecta Math. (N.S.) (2017), pp. 1-53.

\bibitem{Cu18a}
C. Cuenca. Asymptotic Formulas for Macdonald Polynomials and the Boundary of the $(q, t)$--Gelfand-Tsetlin Graph. SIGMA Symmetry Integrability Geom. Methods Appl. 12 (2018), 001.

\bibitem{dJ}
M. FE. de Jeu. The dunkl transform. Invent. Math. 113, no. 1 (1993), pp. 147-162.

\bibitem{De}
M. Defosseux. Orbit measures, random matrix theory and interlaced determinantal processes. Ann. Inst. Henri Poincaré Probab. Stat. Vol. 46. No. 1. Institut Henri Poincaré, 2010.

\bibitem{Di}
E. Dimitrov. Six-vertex models and the GUE-corners process. Int. Math. Res. Not. IMRN (2016).

\bibitem{DE}
I. Dumitriu, and Alan Edelman. Global spectrum fluctuations for the $\beta$-Hermite and $\beta$-Laguerre ensembles via matrix models. J. Math. Phys. 47, no. 6 (2006): 063302.

\bibitem{D}
C. F. Dunkl. Differential-difference operators associated to reflection groups. Trans. Amer. Math. Soc. 311, no. 1 (1989), pp. 167-183.

\bibitem{ES}
A. Edelman, and B. D. Sutton. From random matrices to stochastic operators. J. Stat. Phys. 127, no. 6 (2007), pp. 1121-1165.

\bibitem{ES}
L. Erd\"{o}s, and D. Schröder. Fluctuations of rectangular Young diagrams of interlacing Wigner eigenvalues. Int. Math. Res. Not. IMRN 2018, no. 10 (2017), pp. 3255-3298.

\bibitem{FR}
P. J. Forrester, and E. M. Rains. Interpretations of some parameter dependent generalizations of classical matrix ensembles. Probab. Theory Related Fields 131, no. 1 (2005), pp. 1-61.

\bibitem{FW}
P. Forrester , and S. V. E. N. Warnaar. The importance of the Selberg integral. Bull. Amer. Math. Soc. (N.S.) 45, no. 4 (2008), pp. 489-534.

\bibitem{F}
P. J. Forrester. Log-gases and random matrices (LMS-34). Princeton University Press, 2010.

\bibitem{FIZ}
P. J. Forrester, J. R. Ipsen, D. Z, and L. Zhang. Orthogonal and symplectic Harish-Chandra integrals and matrix product ensembles. Random Matrices Theory Appl. (2019): 1950015.

\bibitem{Ga}
F. R. Gantmakher. Lectures in analytical mechanics. Moscow, Nauka, 1966.

\bibitem{GN}
I. M. Gelfand, and M. A. Naimark. Unitary representations of the classical groups. Tr. Mat. Inst. Steklova 36 (1950), pp. 3-288.

\bibitem{G}
V. Gorin. From alternating sign matrices to the gaussian unitary ensemble. Comm. Math. Phys. 332, no. 1 (2014), pp. 437-447.

\bibitem{GoM}
V. Gorin, and A. W. Marcus. Crystallization of random matrix orbits. Int. Math. Res. Not. IMRN (2018).

\bibitem{GP}
V. Gorin, and G. Panova. Asymptotics of symmetric polynomials with applications to statistical mechanics and representation theory. Ann. Probab. 43, no. 6 (2015), pp. 3052-3132.

\bibitem{GS}
V. Gorin, and M. Shkolnikov. Multilevel Dyson Brownian motions via Jack polynomials. Probab. Theory Related Fields  163, no. 3-4 (2015), pp. 413-463.

\bibitem{GZ}
V. Gorin, and L. Zhang. Interlacing adjacent levels of $\beta$–Jacobi corners processes. Probab. Theory Related Fields  (2016), pp. 1-67.

\bibitem{GK}
T. Guhr, and H. Kohler. Recursive construction for a class of radial functions. I. Ordinary space. J. Math. Phys. 43, no. 5 (2002), pp. 2707-2740.

\bibitem{GH}
A. Guionnet, and J. Huang. Rigidity and Edge Universality of Discrete $\beta$-Ensembles. Comm. Pure Appl. Math. (2017).

\bibitem{GuM}
A. Guionnet, and M. Maıda. A Fourier view on the R-transform and related asymptotics of spherical integrals. J. Funct. Anal. 222, no. 2 (2005), pp. 435-490.

\bibitem{HC}
Harish-Chandra. Differential operators on a semisimple Lie algebra. Amer. J. Math. (1957), pp. 87-120.

\bibitem{IZ}
C. Itzykson, and J-B. Zuber. The planar approximation. II. J. Math. Phys. 21, no. 3 (1980), pp. 411-421.

\bibitem{J2}
K. Johansson. On fluctuations of eigenvalues of random Hermitian matrices. Duke Math. J. 91, no. 1 (1998), pp. 151-204.

\bibitem{J}
K. Johansson. Non-intersecting paths, random tilings and random matrices. Probab. Theory Related Fields 123, no. 2 (2002), pp. 225-280.

\bibitem{JN}
K. Johansson, and E. Nordenstam. Eigenvalues of GUE minors. Electron. J. Probab. 11 (2006), pp. 1342-1371.

\bibitem{K}
K. WJ Kadell. The Selberg–Jack symmetric functions. Adv. Math. 130, no. 1 (1997), pp. 33-102.

\bibitem{M82}
I. G. Macdonald. Some conjectures for root systems. SIAM J. Math. Anal. 13, no. 6 (1982), pp. 988-1007.

\bibitem{M}
I. Macdonald. Symmetric Functions and Hall Polynomials. Oxford University Press, Oxford, second edition, 1999.

\bibitem{M13}
I. G. Macdonald. Hypergeometric functions I. Preprint, arXiv:1309.4568 (2013).

\bibitem{MM}
E. S. Meckes, and M. W. Meckes. Random matrices with prescribed eigenvalues and expectation values for random quantum states. Preprint, arXiv:1711.02710 (2017).

\bibitem{Me}
M. L. Mehta. Random Matrices (3rd ed.). Amsterdam: Elsevier/Academic Press, 2004.

\bibitem{MP}
S. Mkrtchyan, and L. Petrov. GUE corners limit of $q$--distributed lozenge tilings. Electron. J. Probab. 22 (2017).

\bibitem{N}
A. Yu. Neretin. Rayleigh triangles and non-matrix interpolation of matrix beta integrals. Sb. Math. 194, no. 4 (2003): 515.

\bibitem{Ni}
P. P. Nikitin. $O(\infty)$-- and $Sp(\infty)$--invariant ergodic measures on the spaces of infinite antisymmetric and quaternionic antihermitian matrices. Zap. Nauchn. Sem. S.-Peterburg. Otdel. Mat. Inst. Steklov. (POMI) 437 (2015), pp. 207-220.

\bibitem{No}
J. Novak. Lozenge tilings and Hurwitz numbers. J. Stat. Phys. 161, no. 2 (2015), pp. 509-517.

\bibitem{OkOl}
A. Okounkov, and G. Olshanski. Shifted Jack Polynomials, Binomial Formula, and Applications. Math. Res. Lett. 4 (1997), pp. 69-78.

\bibitem{OkR}
A. Okounkov, and N. Reshetikhin. Correlation function of Schur process with application to local geometry of a random 3-dimensional Young diagram. J. Amer. Math. Soc. 16, no. 3 (2003), pp. 581-603.

\bibitem{OR2}
A. Y. Okounkov, and N. Y. Reshetikhin. The birth of a random matrix. Mosc. Math. J. 6, no. 3 (2006), pp. 553-566.

\bibitem{OV}
G. Olshanski, and A. Vershik. Ergodic unitarily invariant measures on the space of infinite Hermitian matrices. Amer. Math. Soc. Transl. Ser 2 175 (1996), pp. 137-176.

\bibitem{O}
F. Olver. Asymptotics and special functions. AK Peters/CRC Press, 1997.

\bibitem{Op}
E. M. Opdam. Dunkl operators, Bessel functions and the discriminant of a finite Coxeter group. Compos. Math. 85, no. 3 (1993), pp. 333-373.

\bibitem{RRV}
J. Ramirez, B. Rider, and B. Virág. Beta ensembles, stochastic Airy spectrum, and a diffusion. J. Amer. Math. Soc. 24, no. 4 (2011), pp. 919-944.

\bibitem{Sh}
M. Shcherbina. Change of variables as a method to study general $\beta$-models: bulk universality. J. Math. Phys. 55, no. 4 (2014): 043504.

\bibitem{St}
R. P. Stanley. Some combinatorial properties of Jack symmetric functions. Adv. Math. 77, no. 1 (1989), pp. 76-115.

\bibitem{S}
Y. Sun. Matrix models for multilevel Heckman-Opdam and multivariate Bessel measures. Preprint, arXiv:1609.09096 (2016).

\bibitem{TE}
F. Tricomi, and A. Erdélyi. The asymptotic expansion of a ratio of gamma functions. Pacific J. Math. 1, no. 1 (1951), pp. 133-142.

\bibitem{VV}
B. Valkó, and B. Virág. Continuum limits of random matrices and the Brownian carousel. Invent. Math. 177, no. 3 (2009), pp. 463-508.

\end{thebibliography}
\end{document}